\def\BB{{\mathfrak B}}
\def\vp{p(\cdot)}
\def\lv{{L^{\vp}(\rn)}}
\def\rr{{\mathbb R}}
\def\rn{{\mathbb{R}^n}}
\def\vh{{H_A^{\vp}(\rn)}}
\def\vfah{{H_{A,\,{\rm fin}}^{\vp,r,s}(\rn)}}
\def\vfahfz{{H_{A,\,{\rm fin}}^{\vp,\fz,s}(\rn)}}
\def\zz{{\mathbb Z}}
\def\nn{{\mathbb N}}
\def\ca{{\mathcal A}}
\def\cp{{\mathcal P}}
\def\dfrac{\displaystyle\frac}
\def\cs{{\mathcal S}}
\def\fz{\infty }
\def\az{\alpha}
\def\lz{\lambda}
\def\vaz{\varepsilon}
\def\HL{M_{{\rm HL}}}
\def\lf{\left}
\def\r{\right}
\def\hs{\hspace{0.35cm}}
\def\ls{\lesssim}
\def\gs{\gtrsim}
\def\noz{\nonumber}
\def\com{\complement}
\def\loc{{\mathop\mathrm{\,loc\,}}}
\def\supp{\mathop\mathrm{\,supp\,}}
\def\XXint#1#2#3{{\setbox0=\hbox{$#1{#2#3}{\int}$ }
\vcenter{\hbox{$#2#3$ }}\kern-.6\wd0}}
\DeclareMathOperator{\esssup}{ess\,sup}
\DeclareMathOperator{\essinf}{ess\,inf}
\def\({\left(}
\def \){ \right)}
\def\vp{p(\cdot)}
\def\lv{{L^{\vp}(\rn)}}
\newtheorem{theorem}{Theorem}[section]
\newtheorem{lemma}[theorem]{Lemma}
\newtheorem{corollary}[theorem]{Corollary}
\newtheorem{proposition}[theorem]{Proposition}
\theoremstyle{definition}
\newtheorem{remark}[theorem]{Remark}
\newtheorem{definition}[theorem]{Definition}
\renewcommand{\appendix}{\par
   \setcounter{section}{0}%
   \setcounter{subsection}{0}%
   \setcounter{subsubsection}{0}%
   \gdef\thesection{\@Alph\c@section}%
   \gdef\thesubsection{\@Alph\c@section.\@arabic\c@subsection}%
   \gdef\theHsection{\@Alph\c@section.}%
   \gdef\theHsubsection{\@Alph\c@section.\@arabic\c@subsection}%
   \csname appendixmore\endcsname
 }
\numberwithin{equation}{section}
\begin{document}

\title{\bf\Large
Anisotropic Variable Campanato-Type Spaces and Their Carleson Measure Characterizations
\footnotetext{\hspace{-0.35cm} 2020 {\it
Mathematics Subject Classification}. Primary 42B35;
Secondary 42B30, 46E30, 28C20.
\endgraf {\it Key words and phrases.}
expansive matrix, (variable) Campanato-type
space, (variable) Hardy space, tent space,
duality, Carleson measure.
\endgraf This project is supported
by the National Natural Science Foundation of China (Grant No.
11971125).}}
\author{Long Huang and Xiaofeng Wang\footnote{Corresponding author,
E-mail: \texttt{wxf@gzhu.edu.cn}.
}
}
\date{}
\maketitle

\vspace{-0.7cm}

\begin{center}
\begin{minipage}{13cm}
{\small {\bf Abstract}\quad
Let $p(\cdot):\ {\mathbb{R}^n}\to(0,\infty)$ be a variable
exponent function satisfying the globally
log-H\"{o}lder continuous condition and $A$
a general expansive matrix on ${\mathbb{R}^n}$. In this article,
the authors introduce the anisotropic variable Campanato-type
spaces and give some applications. Especially, using the
known atom and finite atom characterizations of anisotropic variable
Hardy space $H_A^{\vp}(\mathbb{R}^n)$, the authors prove that this Campanato-type space
is the appropriate dual space of $H_A^{\vp}(\mathbb{R}^n)$ with full range $p(\cdot)$.
As applications, the authors first deduce several equivalent
characterizations of these Campanato-type spaces. Furthermore,
the authors also introduce the anisotropic variable tent spaces and
show their atomic decomposition. Combining this and the
obtained dual theorem, the Carleson measure characterizations
of these anisotropic variable Campanato-type spaces are established.
}
\end{minipage}
\end{center}


\vspace{0.3cm}

\section{Introduction\label{s0}}

Recall that the classical Campanato space was first introduced
by Campanato \cite{c64} in 1964, which includes the bounded
mean oscillation function space $\mathop{\mathrm{BMO}}(\rn)$ of
John and Nirenberg \cite{jn61}. Later, Taibleson and Weiss
\cite{tw80} showed that the Campanato space is the dual space
of the well-known Hardy space $H^p(\rn)$ for $p\in(0,1]$ in 1980,
which generalized the celebrated dual
theorem of Fefferman and Stein \cite{fs72}. Namely,
the bounded mean oscillation function space
$\mathop{\mathrm{BMO}}(\rn)$ is the dual space of the
Hardy space $H^1(\rn)$. Nowadays, the Campanato space
plays an important role in harmonic analysis and
partial differential equations, and has been
systematically studied and developed so far.
For instance, Cianchi and Pick \cite{cp03} studied the Sobolev
embedding into Campanato spaces; El Baraka \cite{b06} established
the Littlewood--Paley function characterizations of
Campanato spaces;
Nakai \cite{n06} extended the Campanato spaces into the
spaces of homogeneous type; Nakai \cite{n10,n17} investigated
singular integral operators and fractional integral operators
on Campanato spaces or their predual spaces;
Nakai and Yoneda \cite{ny19} gave some applications of Campanato spaces
with variable growth condition to the Navier--Stokes equation;
Mizuta et al. \cite{nmos20,nmos20-1} recently considered Campanato--Morrey
spaces for the double phase functionals; Ho \cite{ho19} investigated
integral operators on BMO and Campanato spaces. For more developments
on Campanato spaces, we refer the
reader to \cite{mb03,hlyy,hyy,ns12,w}.

In addition, based on the variable Lebesgue space,
several variable function spaces have rapidly been developed
in the past two decades; see, for instance,
\cite{cw14,ho20,ho20-2,s13,s18,wx21,w21,x08,xy21}.
Note that, the study of variable Lebesgue spaces originates from
Orlicz \cite{o31}. However, they have been the subject of more
intensive study since the early 1990s due to their intrinsic interest
for applications into harmonic analysis, partial
differential equations and variational integrals with
nonstandard growth conditions (see books \cite{CUF} and \cite{DHHR}).
Recall that, let a measurable exponent function
$p(\cdot):\ \rn\to(0,\fz)$ satisfy the so-called
\emph{globally log-H\"{o}lder continuous condition}
[see \eqref{2e4} and \eqref{2e5} below], $p_- :={\essinf}_{x\in\rn}p(x)$,
and $p_+:={\esssup}_{x\in\rn}p(x)$. In 2012,
Nakai and Sawano \cite{ns12} introduced the variable Hardy
space $H^{p(\cdot)}(\rn)$ and generalized the Campanato spaces
into the variable exponents setting, in which they also showed
the variable Campanato space is the dual of $H^{p(\cdot)}(\rn)$
when $0< p_-\le p_+\le 1$; see \cite[Theorem 7.5]{ns12}.
After that, Sawano \cite{s13} improved the corresponding
result in [32] via extending the atomic characterization of $H^{p(\cdot)}(\rn)$.
Moreover, Zhuo et al. \cite{zyl} established equivalent
characterizations of $H^{p(\cdot)}(\rn)$ via intrinsic square functions. In particular,
Cruz-Uribe and Wang \cite{cw14} also independently investigated the variable Hardy space $H^{p(\cdot)}(\rn)$ with $p(\cdot)$
satisfying some weaker hypothesis than those used in \cite{ns12}.
Very recently, Cruz-Uribe et al. \cite{cmn20} further developed
a new approach to show norm inequalities for Calder\'{o}n--Zygmund singular
operators and fractional integral operators on variable Hardy spaces
with Muckenhoupt weight.

On another hand, under the framework of Hardy-type spaces associated
with ball quasi-Banach function spaces,
Zhang et al. \cite{zhyy} recently obtained the dual result
of $H^{p(\cdot)}(\rn)$ on the full range $0< p_-\le p_+<\fz$,
which extended the dual theorem of Nakai and Sawano \cite{ns12}.
Moreover, let $A$ be a general expansive matrix on
$\rn$. Recall that the variable Hardy space $\vh$ associated with $A$
was first introduced and studied by Liu et al. \cite{lwyy17},
in which they charactered $\vh$ in terms of maximal functions,
atoms, finite atoms, and Littlewood--Paley functions.
Furthermore, the anisotropic variable Campanato space
associated with $A$ was introduced by Wang \cite{w} very recently.
In \cite{w}, the author proved that the anisotropic
variable Campanato space is the dual space of the
anisotropic variable Hardy space $\vh$ with $0< p_-\le p_+\le 1$.
In addition, note that if $p_-\in(1,\fz)$, then $\vh=L^{p(\cdot)}(\rn)$
with equivalent quasi-norms (see \cite[Corollary 4.20]{zsy}). Thus,
in this case, it follows from Cruz-Uribe and Fiorenza \cite[Theorem 2.80]{CUF} that the dual space of
$\vh$ is just the variable Lebesgue space $L^{p(\cdot)'}(\rn)$,
where conjugate exponent function $p(\cdot)'$ is defined by setting
$\frac1{p(\cdot)}+\frac1{p(\cdot)'}=1.$ Obviously,
there is a gap between two ranges $0< p_-\le p_+\le 1$ and $p_-\in(1,\fz)$.
This means that when $0< p_-\le 1< p_+< \fz$, the dual space of $\vh$
is still missing until now. Indeed, the main difficult in this case is
that the considered Hardy space does not have a concave quasi-norm.

To solve this problem and also to enrich the theory of
anisotropic variable Campanato spaces,
in this article, by viewing the finite linear combinations of atoms as a whole,
we introduce the anisotropic variable Campanato-type spaces and
give some applications. Especially, we show that these
new introduced anisotropic variable Campanato-type spaces
include the anisotropic variable Campanato space in \cite{w}.
Moreover, inspired by \cite{zhyy} and using the known atom and finite atom characterizations
of $\vh$, we prove that this Campanato-type space
is the appropriate dual space of $\vh$ with $0< p_-\le p_+<\fz$,
which extends the known dual result of $\vh$. We point out that, even in the isotropic case,
the obtained dual theorem gives a
complete answer to the open question proposed by Izuki et al.
in \cite[Section 9.3]{ins13}.
As applications, we first deduce several equivalent
characterizations of these Campanato-type spaces. Furthermore,
we introduce the anisotropic variable tent spaces and
show their atomic decomposition. Combining this and the
obtained dual theorem, we finally establish
the Carleson measure characterizations of the
anisotropic variable Campanato-type spaces.

Precisely, this article is organized as follows.

In Section \ref{s1}, we recall some notions on dilations
and variable Lebesgue spaces. Then we introduce the
anisotropic variable Campanato-type space and show some
basic properties. Section \ref{s2} is devoted to proving
that the anisotropic variable Campanato-type space is
the dual space of $\vh$ for full range $0< p_-\le p_+<\fz$
(see Theorem \ref{2t1} below). To this end, we first
recall the atomic and the finite atomic characterizations
of the anisotropic variable Hardy space $\vh$ established
in \cite[Theorems 4.8 and 5.4]{lwyy17}. Combining these,
the special structure of the anisotropic variable
Campanato-type space, and some basic tools from
functional analysis, we identify the
Campanato-type space $\mathcal{L}_{\vp,q',s,\underline{p}}^A(\rn)$
with the dual space of the Hardy space $\vh$ for the full range
$0< p_-\le p_+<\fz$. We point out that, as a special case,
the dual theorem of $\vh$ with $0< p_-\le p_+\le 1$ is obviously
obtained, which cover the result
of \cite[Theorem 4.4]{w} [see Remark \ref{2r3} below].

In Section \ref{s3}, we first obtain
several equivalent characterizations of the anisotropic
variable Campanato-type space (see Theorem \ref{2t2}
and Corollary \ref{2c3} below). Then, by this and
the obtained dual result in Theorem \ref{2t1},
we further establish the Carleson measure
characterization of the anisotropic variable
Campanato-type space $\mathcal{L}_{\vp,1,s,\underline{p}}^A({\rn})$
(see Theorem \ref{2t3} below). To show this Carleson measure
characterization, we introduce the anisotropic variable tent
space (see Definition \ref{d1} below) and
give their atomic decomposition (see Lemma \ref{tent} below),
which plays key role in the proof of Theorem \ref{2t3}.
Indeed, applying this atomic decomposition, the Lusin area function
characterization of $\vh$ obtained in
\cite[Theorem 6.1]{lwyy17} (see also \cite[Theorem 4.4(i)]{lhy}),
and the obtained dual theorem, we further conclude Theorem \ref{2t3}.

Finally, we make some conventions on notation. Let $\nn:=\{1,2,\ldots\}$,
$\zz_+:=\{0\}\cup\nn$, and $\zz_+^n:=(\zz_+)^n$,
and use $\mathbf{0}$ to denote the \emph{origin} of $\rn$.
For any multi-index $\az:=(\az_1,\ldots,\az_n)\in\zz_+^n$
and $x:=(x_1,\ldots,x_n)\in\rn$,
let
$$|\az|:=\az_1+\cdots+\az_n,\ x^{\az}:=x_1^{\az_1}\cdots x_n^{\az_n},\ \mathrm{and}\
\partial^{\az}:=\lf(\frac{\partial}{\partial x_1}\r)^{\az_1} \cdots
\lf(\frac{\partial}{\partial x_n}\r)^{\az_n}.$$
We always denote by $C$
a \emph{positive constant} which is independent of the main parameters,
but it may vary from line to line. The notation $f\ls g$ means $f\le Cg$
and, if $f\ls g\ls f$, then we write $f\sim g$. If $f\le Cg$ and $g=h$
or $g\le h$, we then write $f\ls g\sim h$ or $f\ls g\ls h$, rather than
$f\ls g=h$ or $f\ls g\le h$. For any $q\in[1,\infty]$, we denote by $q'$ its
\emph{conjugate exponent}, namely, $1/q+1/q'=1$. The \emph{symbol} $\lfloor s\rfloor$ for
any $s\in\mathbb{R}$ denotes the largest integer not greater
than $s$. In addition, for any set $E\subset\rn$, we denote the
set $\rn\setminus E$ by $E^\complement$, its \emph{characteristic function}
by $\mathbf{1}_E$, and its \emph{n-dimensional Lebesgue measure} by $|E|$.
Throughout this article, the \emph{symbol} $C^{\fz}(\rn)$
denotes the set of all \emph{infinitely differentiable functions} on $\rn$.

\section{Anisotropic variable Campanato-type spaces\label{s1}}

In this section, we first recall some notions on dilations
and variable Lebesgue spaces. Then we introduce the
anisotropic variable Campanato-type space and give some
basic properties. To begin with, we present
the definition of dilations from \cite[p.\,5, Definition 2.1]{mb03}.

\begin{definition}\label{2d1}
A real $n\times n$ matrix $A$ is called an \emph{expansive matrix},
shortly, a \emph{dilation} if
$$\min_{\lz\in\sigma(A)}|\lz|>1,$$
here and thereafter, $\sigma(A)$ denotes the \emph{set of
all eigenvalues of $A$}.
\end{definition}

Let $b:=|\det A|$. Then, from \cite[p.\,6, (2.7)]{mb03}, it follows
that $b\in(1,\fz)$. By the fact that there exist an open
ellipsoid $\Delta$, with $|\Delta|=1$, and $r\in(1,\infty)$ such that
$\Delta\subset r\Delta\subset A\Delta$ (see \cite[p.\,5, Lemma 2.2]{mb03}),
we find that, for any $k\in\zz$, $B_k:=A^k\Delta$ is open,
$B_k\subset rB_k\subset B_{k+1}$ and $|B_k|=b^k$.
For any $x\in\rn$ and $k\in\mathbb{Z}$, an ellipsoid $x+B_k$
is called a \emph{dilated ball}. In what follows,
we always let $\mathfrak{B}$ be the set of all such
dilated balls, namely,
\begin{align}\label{2e1}
\mathfrak{B}:=\lf\{x+B_k:\ x\in\rn\ {\rm and}\ k\in\mathbb{Z}\r\}
\end{align}
and
\begin{align}\label{2e2}
\omega:=\inf\lf\{\ell\in\zz:\ r^\ell\ge2\r\}.
\end{align}

The following notion of homogeneous quasi-norms
is just \cite[p.\,6, Definition 2.3]{mb03}.

\begin{definition}\label{2d2}
A \emph{homogeneous quasi-norm},
associated with a dilation $A$, is a measurable mapping
$\rho:\ \rn \to [0,\infty)$ satisfying
\begin{enumerate}
\item[\rm{(i)}] if $x\neq\mathbf{0}$, then $\rho(x)\in(0,\fz)$;

\item[\rm{(ii)}] for any $x\in\rn$, $\rho(Ax)=b\rho(x)$;

\item[\rm{(iii)}] there exists an $H\in[1,\fz)$
such that, for any $x$, $y\in\rn$, $\rho(x+y)\le H[\rho(x)+\rho(y)]$.
\end{enumerate}
\end{definition}

In what follows, for a given dilation $A$, by \cite[p.\,6, Lemma 2.4]{mb03},
we may use, for both simplicity and convenience, the \emph{step homogeneous quasi-norm} $\rho$
defined by setting, for any $x\in\rn$,
\begin{equation*}\label{2e3'}
\rho(x):=\sum_{k\in\mathbb{Z}}
b^k{\mathbf 1}_{B_{k+1}\setminus B_k}(x)\hspace{0.25cm}
{\rm when}\ x\neq\mathbf{0},\hspace{0.35cm} {\rm or\ else}
\hspace{0.25cm}\rho(\mathbf{0}):=0.
\end{equation*}

A measurable function $p(\cdot):\ \rn\to(0,\fz]$ is called a
\emph{variable exponent}. Denote by $\cp(\rn)$ the \emph{set
of all variable exponents} $p(\cdot)$ satisfying
\begin{align}\label{2e3}
0<p_-:=\mathop\mathrm{ess\,inf}_{x\in \rn}p(x)\leq
\mathop\mathrm{ess\,sup}_{x\in \rn}p(x)=:p_+<\fz.
\end{align}
For any $p(\cdot)\in\cp(\rn)$ and a measurable function
$f$, the \emph{modular functional} $\varrho_{p(\cdot)}(f)$
is defined by setting
$$\varrho_{p(\cdot)}(f):=\int_\rn|f(x)|^{p(x)}\,dx$$
and the \emph{Luxemburg} (also called \emph{Luxemburg-Nakano})
\emph{quasi-norm} $\|f\|_{\lv}$ of $f$ is given by
\begin{equation*}
\|f\|_{\lv}:=\inf\lf\{\lz\in(0,\fz):\ \varrho_{p(\cdot)}(f/\lz)\le1\r\}.
\end{equation*}
Then the \emph{variable Lebesgue space} $\lv$ is defined to be the
set of all measurable functions $f$ such that $\varrho_{p(\cdot)}(f)<\fz$,
equipped with the quasi-norm $\|f\|_{\lv}$.

Let $C^{\log}(\rn)$ be the set of all functions $p(\cdot)\in\cp(\rn)$
satisfying the \emph{globally log-H\"older continuous condition}, namely,
there exist $C_{\log}(p)$, $C_\fz\in(0,\fz)$, and
$p_\fz\in\rr$ such that, for any $x,\ y\in\rn$,
\begin{equation}\label{2e4}
|p(x)-p(y)|\le \frac{C_{\log}(p)}{\log(e+1/\rho(x-y))}
\end{equation}
and
\begin{equation}\label{2e5}
|p(x)-p_\fz|\le \frac{C_\fz}{\log(e+\rho(x))}.
\end{equation}

In what follows, for any $s\in\zz_+$, $\mathbb{P}_s(\rn)$
denotes the set of all polynomials on $\rn$
with degree not greater than $s$; for any ball $B\in\BB$
with $\BB$ as in \eqref{2e1}
and any locally integrable function $g$ on $\rn$,
we use $P^s_Bg$ to denote the \emph{minimizing polynomial} of
$g$ with degree not greater than $s$, which means
that $P^s_Bg$ is the unique polynomial $f\in\mathbb{P}_s(\rn)$
such that, for any $h\in\mathbb{P}_s(\rn)$,
$$\int_{B}[g(x)-f(x)]h(x)\,dx=0.$$

We next introduce the following anisotropic variable
Campanato-type spaces. For any given $q\in[1,{\infty})$,
we use $L_{\loc}^q(\rn)$ to denote the set of all $q$-order
locally integrable functions on $\rn$.

\begin{definition}\label{2d5}
Let $p(\cdot)\in \mathcal{P}(\rn)$, $q\in[1,{\infty})$, $\eta\in(0,\infty)$,
and $s\in\zz_+$. Then the \emph{anisotropic variable Campanato-type space}
$\mathcal{L}_{\vp,q,s,\eta}^A(\rn)$ is defined to be
the set of all $f\in L^q_{\rm loc}(\rn)$ such that
\begin{align*}
\|f\|_{\mathcal{L}_{\vp,q,s,\eta}^A(\rn)}
&:=\,\sup\lf\|\lf\{\sum_{i=1}^m
\lf[\frac{{\lambda}_i}{\|{\mathbf{1}}_{B^{(i)}}\|_{\lv}}\r]^{\eta}
{\mathbf{1}}_{B^{(i)}}\r\}^{\frac1{\eta}}\r\|_{\lv}^{-1}
\\
&\quad\times\sum_{j=1}^m\lf\{\frac{{\lambda}_j|B^{(j)}|}{\|{\mathbf{1}}_{B^{(j)}}
\|_{\lv}}
\lf[
\frac1{|B^{(j)}|}\int_{B^{(j)}}\lf|f(x)-P^s_{B^{(j)}}f(x)\r|^q \,dx\r]^\frac1q\r\}
\end{align*}
is finite, where the supremum is taken over all $m\in\nn$, $\{B^{(j)}\}_{j=1}^m\subset\BB$, and
$\{\lambda_j\}_{j=1}^m\subset[0,\infty)$ with $\sum_{j=1}^m\lambda_j\neq0$.
\end{definition}

\begin{remark}\label{2r1}
\begin{enumerate}
\item[{\rm(i)}]
We point out that $\mathbb{P}_s(\rn)\subset\mathcal{L}_{\vp,q,s,\eta}^A(\rn)$.
Indeed, $$\|f\|_{\mathcal{L}_{\vp,q,s,\eta}^A(\rn)}=0$$ if and only if
$f\in\mathbb{P}_s(\rn)$. Throughout this
article, we always identify $f\in \mathcal{L}_{\vp,q,s,\eta}^A(\rn)$
with $\{f+P:\ P\in\mathbb{P}_s(\rn)\}$.
\item[{\rm(ii)}]
For any $f\in L^q_{\rm loc}(\rn)$, define
\begin{align*}
\||f\||_{\mathcal{L}_{\vp,q,s,\eta}^A(\rn)}
&:=\,\sup\inf
\lf\|\lf\{\sum_{i=1}^m
\lf[\frac{{\lambda}_i}{\|{\mathbf{1}}_{B^{(i)}}\|_{\lv}}\r]^{\eta}
{\mathbf{1}}_{B^{(i)}}\r\}^{\frac1{\eta}}\r\|_{\lv}^{-1}
\\
&\quad\times\sum_{j=1}^m\lf\{
\frac{{\lambda}_j|B^{(j)}|}{\|{\mathbf{1}}_{B^{(j)}}
\|_{\lv}}\lf[
\frac1{|B^{(j)}|}\int_{B^{(j)}}\lf|f(x)-P(x)\r|^q \,dx\r]^\frac1q\r\},
\end{align*}
where the supremum is the same as in Definition \ref{2d5} and
the infimum is taken over all $P\in\mathbb{P}_s(\rn)$.
Then we can easily show that $\||\cdot\||_{\mathcal{L}_{\vp,q,s,\eta}^A(\rn)}$ is an equivalent quasi-norm
of the Campanato-type space ${\mathcal{L}_{\vp,q,s,\eta}^A(\rn)}$
and we omit the details here.
\end{enumerate}
\end{remark}

For the Campanato-type space $\mathcal{L}_{\vp,q,s,\eta}^A(\rn)$,
we have the following equivalent quasi-norm characterization.

\begin{proposition}\label{2p1}
Let $p(\cdot)\in \mathcal{P}(\rn)$, $q\in[1,{\infty})$, $\eta\in(0,\infty)$,
and $s\in\zz_+$. For any $f\in L^q_{\rm loc}(\rn)$, define
\begin{align*}
\widetilde{\|f\|}_{\mathcal{L}_{\vp,q,s,\eta}^A(\rn)}
&:=\,\sup
\lf\|\lf\{\sum_{i\in\nn}
\lf[\frac{{\lambda}_i}{\|{\mathbf{1}}_{B^{(i)}}\|_{\lv}}\r]^{\eta}
{\mathbf{1}}_{B^{(i)}}\r\}^{\frac1{\eta}}\r\|_{\lv}^{-1}\\
&\quad\times\sum_{j\in\nn}\lf\{\frac{{\lambda}_j|B^{(j)}|}{\|{\mathbf{1}}_{B^{(j)}}
\|_{\lv}}
\lf[
\frac1{|B^{(j)}|}\int_{B^{(j)}}\lf|f(x)-P^s_{B^{(j)}}f(x)\r|^q \,dx\r]^\frac1q\r\},
\end{align*}
where the supremum is taken over all $\{B^{(j)}\}_{j\in\nn}\subset \BB$ and
$\{\lambda_j\}_{j\in\nn}\subset[0,\infty)$ satisfying
\begin{align}\label{2e6}
\lf\|\lf\{\sum_{j\in\nn}
\lf[\frac{{\lambda}_j}{\|{\mathbf{1}}_{B^{(j)}}\|_\lv}\r]^{\eta}
{\mathbf{1}}_{B^{(j)}}\r\}^{\frac1{\eta}}\r\|_{\lv}\in(0,\infty).
\end{align}
Then, for any $f\in L^q_{\rm loc}(\rn)$,
$$\widetilde{\|f\|}_{\mathcal{L}_{\vp,q,s,\eta}^A(\rn)}
=\|f\|_{\mathcal{L}_{\vp,q,s,\eta}^A(\rn)}.$$
\end{proposition}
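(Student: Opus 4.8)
The plan is to establish the two inequalities $\|f\|_{\mathcal{L}_{\vp,q,s,\eta}^A(\rn)}\le\widetilde{\|f\|}_{\mathcal{L}_{\vp,q,s,\eta}^A(\rn)}$ and $\widetilde{\|f\|}_{\mathcal{L}_{\vp,q,s,\eta}^A(\rn)}\le\|f\|_{\mathcal{L}_{\vp,q,s,\eta}^A(\rn)}$ separately. The first is just a matter of enlarging the index set. Given any admissible finite family $\{B^{(j)}\}_{j=1}^m\subset\BB$, $\{\lambda_j\}_{j=1}^m\subset[0,\fz)$ with $\sum_{j=1}^m\lambda_j\neq0$ as in Definition \ref{2d5}, I would extend it to a countable family by setting $\lambda_j:=0$ and $B^{(j)}:=B^{(1)}$ for every $j>m$. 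Since every dilated ball has finite positive Lebesgue measure, $\|\mathbf{1}_{B^{(j)}}\|_{\lv}\in(0,\fz)$ for $1\le j\le m$, and since $\sum_{j=1}^m\lambda_j\neq0$ there is some $\lambda_{i_0}>0$; hence $\{\sum_{i=1}^m[\lambda_i/\|\mathbf{1}_{B^{(i)}}\|_{\lv}]^\eta\mathbf{1}_{B^{(i)}}\}^{1/\eta}$ is bounded, supported in a set of finite measure, and not identically zero, so its Luxemburg quasi-norm belongs to $(0,\fz)$; therefore the extended family satisfies \eqref{2e6}. Because the zero padding changes neither the numerator sum nor the denominator quasi-norm, the ratio attached to the finite family is one of those over which $\widetilde{\|f\|}_{\mathcal{L}_{\vp,q,s,\eta}^A(\rn)}$ is taken, and passing to the supremum over all such finite families gives the first inequality.

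For the reverse inequality, the idea is to truncate an arbitrary countable admissible family to its finite sections and take a limit. Fix $\{B^{(j)}\}_{j\in\nn}\subset\BB$ and $\{\lambda_j\}_{j\in\nn}\subset[0,\fz)$ satisfying \eqref{2e6}, and set
$$g_m:=\lf\{\sum_{i=1}^m\lf[\frac{\lambda_i}{\|\mathbf{1}_{B^{(i)}}\|_{\lv}}\r]^\eta\mathbf{1}_{B^{(i)}}\r\}^{1/\eta},\qquad g:=\lf\{\sum_{i\in\nn}\lf[\frac{\lambda_i}{\|\mathbf{1}_{B^{(i)}}\|_{\lv}}\r]^\eta\mathbf{1}_{B^{(i)}}\r\}^{1/\eta};$$
let $N_m$ and $N$ denote, respectively, the finite sum $\sum_{j=1}^m$ and the infinite series $\sum_{j\in\nn}$ of the nonnegative terms $\frac{\lambda_j|B^{(j)}|}{\|\mathbf{1}_{B^{(j)}}\|_{\lv}}[\frac1{|B^{(j)}|}\int_{B^{(j)}}|f(x)-P^s_{B^{(j)}}f(x)|^q\,dx]^{1/q}$ occurring in the two definitions. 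Then $0\le g_m\uparrow g$ pointwise, so the Fatou property of $\lv$ gives $\|g_m\|_{\lv}\uparrow\|g\|_{\lv}$, with $\|g\|_{\lv}\in(0,\fz)$ by \eqref{2e6}; likewise $0\le N_m\uparrow N\in[0,\fz]$. As $\|g\|_{\lv}>0$ forces some $\lambda_{i_0}>0$, for every $m\ge i_0$ the section $\{B^{(j)}\}_{j=1}^m$, $\{\lambda_j\}_{j=1}^m$ is admissible in Definition \ref{2d5}, whence $N_m/\|g_m\|_{\lv}\le\|f\|_{\mathcal{L}_{\vp,q,s,\eta}^A(\rn)}$. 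Letting $m\to\fz$: if $N<\fz$ then $N_m/\|g_m\|_{\lv}\to N/\|g\|_{\lv}$; if $N=\fz$ then, since $\|g_m\|_{\lv}\le\|g\|_{\lv}<\fz$, the quotients tend to $\fz$ and so $\|f\|_{\mathcal{L}_{\vp,q,s,\eta}^A(\rn)}=\fz$. In either case $N/\|g\|_{\lv}\le\|f\|_{\mathcal{L}_{\vp,q,s,\eta}^A(\rn)}$, and taking the supremum over all admissible countable families yields the second inequality. Combining the two inequalities proves that $\widetilde{\|f\|}_{\mathcal{L}_{\vp,q,s,\eta}^A(\rn)}=\|f\|_{\mathcal{L}_{\vp,q,s,\eta}^A(\rn)}$.

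The only step I expect to be non-routine is the monotone convergence $\|g_m\|_{\lv}\uparrow\|g\|_{\lv}$ used above, i.e. the \emph{Fatou property} of the variable Lebesgue space $\lv$; it follows from the monotone convergence theorem applied to the modular $\varrho_{\vp}$ together with the standard fact (valid since $p_+<\fz$) that $\varrho_{\vp}(h/\lz)\le1$ implies $\|h\|_{\lv}\le\lz$, and is recorded, e.g., in \cite{CUF}. The remaining ingredients — that $\|\mathbf{1}_B\|_{\lv}\in(0,\fz)$ for every dilated ball $B$ (because $0<|B|<\fz$), the monotone convergence of the nonnegative numerator series, and the short case distinction handling a possibly divergent numerator while \eqref{2e6} still holds — are routine, so no further obstacle is anticipated.
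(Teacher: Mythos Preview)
Your proof is correct and follows essentially the same approach as the paper's: both directions are handled by zero-padding finite families (for $\le$) and by truncating countable families to finite sections and passing to the limit (for $\ge$). Your version is in fact more careful than the paper's, which simply asserts the convergence of the truncated ratios to the infinite ratio without explicitly naming the Fatou property of $\lv$ or separating out the case $N=\infty$.
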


\begin{proof}
Let $\vp$, $q$, $s$, and $\eta$ be as in the present proposition and
$f\in L^q_{\rm loc}({{\rr}^n})$. Obviously,
$$\|f\|_{\mathcal{L}_{\vp,q,s,\eta}(\rn)}\le \widetilde{\|f\|}_{\mathcal{L}_{\vp,q,s,\eta}(\rn)}.$$
Conversely, let $\{B^{(j)}\}_{j\in\nn}\subset \BB$ and
$\{\lambda_j\}_{j\in\nn}\subset[0,\infty)$ satisfy \eqref{2e6}.
Observe that
\begin{align*}
&\lim_{m\to\fz}
\lf\|\lf\{\sum_{i=1}^m
\lf[\frac{{\lambda}_i}{\|{\mathbf{1}}_{B^{(i)}}\|_{\lv}}\r]^{\eta}
{\mathbf{1}}_{B^{(i)}}\r\}^{\frac1{\eta}}\r\|_{\lv}^{-1}\\
&\hs\hs\hs\times\sum_{j=1}^m\lf\{\frac{{\lambda}_j|B^{(j)}|}{\|{\mathbf{1}}_{B^{(j)}}
\|_{\lv}}\lf[
\frac1{|B^{(j)}|}\int_{B^{(j)}}\lf|f(x)-P^s_{B^{(j)}}f(x)\r|^q \,dx\r]^\frac1q\r\}\\
&\hs=\lf\|\lf\{\sum_{i\in\nn}
\lf[\frac{{\lambda}_i}{\|{\mathbf{1}}_{B^{(i)}}\|_{\lv}}\r]^{\eta}
{\mathbf{1}}_{B^{(i)}}\r\}^{\frac1{\eta}}\r\|_{\lv}^{-1}\\
&\hs\hs\hs\times\sum_{j\in\nn}\lf\{\frac{{\lambda}_j|B^{(j)}|}{\|{\mathbf{1}}_{B^{(j)}}\|_{\lv}}
\lf[\frac1{|B^{(j)}|}\int_{B^{(j)}}\lf|f(x)-P^s_{B^{(j)}}f(x)\r|^q \,dx\r]^\frac1q\r\}.
\end{align*}
Therefore, for any given $\varepsilon\in(0,\fz)$,
there exists an $m_0\in\nn$ such that
$\sum_{j=1}^{m_0}\lambda_j\neq0$
and
\begin{align*}
&\lf\|\lf\{\sum_{i\in\nn}
\lf[\frac{{\lambda}_i}{\|{\mathbf{1}}_{B^{(i)}}\|_{\lv}}\r]^{\eta}
{\mathbf{1}}_{B^{(i)}}\r\}^{\frac1{\eta}}\r\|_{\lv}^{-1}\\
&\hs\hs\hs\times\sum_{j\in\nn}\lf\{\frac{{\lambda}_j|B^{(j)}|}{\|{\mathbf{1}}_{B^{(j)}}
\|_{\lv}}\lf[
\frac1{|B^{(j)}|}\int_{B^{(j)}}\lf|f(x)-P^s_{B^{(j)}}f(x)\r|^q \,dx\r]^\frac1q\r\}\\
&\hs <\lf\|\lf\{\sum_{i=1}^{m_0}
\lf[\frac{{\lambda}_i}{\|{\mathbf{1}}_{B^{(i)}}\|_{\lv}}\r]^{\eta}
{\mathbf{1}}_{B^{(i)}}\r\}^{\frac1{\eta}}\r\|_{\lv}^{-1}\\
&\hs\hs\hs\times\sum_{i=1}^{m_0}\lf\{\frac{{\lambda}_j|B^{(j)}|}{\|{\mathbf{1}}_{B^{(j)}}
\|_{\lv}}\lf[
\frac1{|B^{(j)}|}\int_{B^{(j)}}\lf|f(x)-P^s_{B^{(j)}}f(x)\r|^q \,dx\r]^\frac1q\r\}
+\varepsilon\\
&\hs\le\|f\|_{\mathcal{L}_{\vp,q,s,\eta}^A(\rn)}+\varepsilon,
\end{align*}
which, combined with the arbitrariness of $\{B^{(j)}\}_{j\in\nn}\subset \BB$ and
$\{\lambda_j\}_{j\in\nn}\subset[0,\infty)$ satisfying \eqref{2e6}, and $\varepsilon\in(0,\fz)$,
further implies that
$$\widetilde{\|f\|}_{\mathcal{L}_{\vp,q,s,\eta}^A(\rn)}
\le\|f\|_{\mathcal{L}_{\vp,q,s,\eta}^A(\rn)}.$$
This finishes the proof of Proposition \ref{2p1}.
\end{proof}

Recall that the following
anisotropic variable Campanato space $\mathcal{L}_{\vp,q,s}^A(\rn)$
was introduced in \cite[Definition 4.1]{w}.

\begin{definition}\label{2d6}
Let $p(\cdot)\in \mathcal{P}(\rn)$, $q\in[1,\fz)$, and $s\in\zz_+$.
The \emph{anisotropic variable Campanato space}
$\mathcal{L}_{\vp,q,s}^A(\rn)$ is defined to be
the set of all $f\in L^q_{\rm loc}({{\rr}^n})$ such that
$$\|f\|_{\mathcal{L}_{\vp,q,s}^A(\rn)}
:=\sup_{B\in\mathfrak{B}}\inf_{P\in\mathbb{P}_s(\rn)}
\frac{|B|}{\|{\mathbf 1}_B\|_{\lv}}\lf[\frac1{|B|}
\int_B\lf|f(x)-P(x)\r|^q\,dx\r]^{\frac1q}<\fz.$$
\end{definition}

\begin{remark}\label{2r2}
\begin{enumerate}
\item[{\rm(i)}]
For any $f\in L^q_{\rm loc}({{\rr}^n})$, define
$$\||f\||_{\mathcal{L}_{\vp,q,s}^A(\rn)}
:=\sup_{B\in \mathfrak{B}}\frac{|B|}{\|{\mathbf{1}}_{B}\|_{\lv}}\lf[
\frac1{|B|}\int_{B}\lf|f(x)-P_B^sf(x)\r|^q \,dx\r]^\frac1q.$$
It is easy to see that $\||\cdot\||_{\mathcal{L}_{\vp,q,s}^A(\rn)}$
is an equivalent quasi-norm of the Campanato space
$\mathcal{L}_{\vp,q,s}^A(\rn)$.

\item[{\rm(ii)}]
From Remark \ref{2r1}(ii) and Definition \ref{2d6},
it immediately follows that
$$\mathcal{L}_{\vp,q,s,\eta}^A(\rn)\subset\mathcal{L}_{\vp,q,s}^A(\rn)$$
for all indices as in Definition \ref{2d5},
and this inclusion is continuous.
\end{enumerate}
\end{remark}

We now investigate the further relation between
two spaces ${\mathcal{L}_{\vp,q,s,\eta}^A(\rn)}$ and
$\mathcal{L}_{\vp,q,s}^A(\rn)$.
Indeed, the next proposition shows that the inverse inclusion of
Remark \ref{2r2}(ii) also holds true for certain ranges of indices.

\begin{proposition}\label{2p2}
Let $p(\cdot)\in \mathcal{P}(\rn)$, $p_+\in(0,1]$ with $p_+$ as in
\eqref{2e3}, $q\in[1,\fz)$, $\eta\in(0,1]$, and $s\in\zz_+$.
Then $$\mathcal{L}_{\vp,q,s,\eta}^A(\rn)=\mathcal{L}_{\vp,q,s}^A(\rn)$$
with equivalent quasi-norms.
\end{proposition}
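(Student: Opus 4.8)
The plan is as follows. By Remark~\ref{2r2}(ii) the inclusion $\mathcal{L}_{\vp,q,s,\eta}^A(\rn)\st\mathcal{L}_{\vp,q,s}^A(\rn)$ and its continuity are already known, so it suffices to prove the reverse inclusion together with the estimate $\|f\|_{\mathcal{L}_{\vp,q,s,\eta}^A(\rn)}\ls\|f\|_{\mathcal{L}_{\vp,q,s}^A(\rn)}$ for every $f\in\mathcal{L}_{\vp,q,s}^A(\rn)$, which itself yields that reverse inclusion. Fix such an $f$ and an arbitrary admissible system $\{B^{(j)}\}_{j=1}^m\st\BB$, $\{\lambda_j\}_{j=1}^m\st[0,\fz)$ with $\sum_{j=1}^m\lambda_j\neq0$ as in Definition~\ref{2d5}, and put $a_j:=\lambda_j/\|{\mathbf 1}_{B^{(j)}}\|_{\lv}$. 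The strategy is to bound the numerator in Definition~\ref{2d5} from above by $\|f\|_{\mathcal{L}_{\vp,q,s}^A(\rn)}\sum_{j=1}^m\lambda_j$, to bound the denominator from below by $\|\sum_{j=1}^m a_j{\mathbf 1}_{B^{(j)}}\|_{\lv}$, and then to invoke the scale-free inequality $\sum_{j=1}^m\lambda_j\ls\|\sum_{j=1}^m a_j{\mathbf 1}_{B^{(j)}}\|_{\lv}$, which is where $p_+\le1$ enters.

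For the numerator, Remark~\ref{2r2}(i) gives, for every $B\in\BB$, that $\frac{|B|}{\|{\mathbf 1}_B\|_{\lv}}[\frac1{|B|}\int_B|f(x)-P^s_Bf(x)|^q\,dx]^{1/q}\le\||f\||_{\mathcal{L}_{\vp,q,s}^A(\rn)}\ls\|f\|_{\mathcal{L}_{\vp,q,s}^A(\rn)}$; multiplying by $\lambda_j$ and summing over $j$ gives the asserted estimate. For the denominator, I use $\eta\in(0,1]$: since $\ell^r$-quasi-norms are non-increasing in $r$, for a.e.\ $x$, writing $S(x):=\{i:\ x\in B^{(i)}\}$, one has $\lf\{\sum_{i\in S(x)}a_i^{\eta}\r\}^{1/\eta}\ge\sum_{i\in S(x)}a_i$, that is, $\lf\{\sum_{i=1}^m a_i^{\eta}{\mathbf 1}_{B^{(i)}}\r\}^{1/\eta}\ge\sum_{i=1}^m a_i{\mathbf 1}_{B^{(i)}}$ pointwise; the monotonicity of $\|\cdot\|_{\lv}$ then gives the desired lower bound for the denominator. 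Combining the two, $\|f\|_{\mathcal{L}_{\vp,q,s,\eta}^A(\rn)}$ is at most $\|f\|_{\mathcal{L}_{\vp,q,s}^A(\rn)}$ times the supremum, over the same data, of $\|\sum_j a_j{\mathbf 1}_{B^{(j)}}\|_{\lv}^{-1}\sum_{j=1}^m\lambda_j$. Thus the proof reduces to
\[
\sum_{j=1}^m\lambda_j\ls\lf\|\sum_{j=1}^m a_j{\mathbf 1}_{B^{(j)}}\r\|_{\lv}
\]
with constant independent of $m$, $\{B^{(j)}\}$, $\{\lambda_j\}$.

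This displayed inequality is the crux, and I expect it to be the only real obstacle. One route to it: normalize so that $\|\sum_j a_j{\mathbf 1}_{B^{(j)}}\|_{\lv}=1$; then $\varrho_{p(\cdot)}(\sum_j a_j{\mathbf 1}_{B^{(j)}})\le1$, and since $a_j{\mathbf 1}_{B^{(j)}}\le\sum_i a_i{\mathbf 1}_{B^{(i)}}$ we get $\lambda_j=\|a_j{\mathbf 1}_{B^{(j)}}\|_{\lv}\le1$ and hence $\lambda_j\le[\varrho_{p(\cdot)}(a_j{\mathbf 1}_{B^{(j)}})]^{1/p_+}$ for every $j$ (the standard modular/quasi-norm comparison, which uses $p_+\le1$). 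Since the naive estimate obtained from super-additivity of $t\mapsto t^{1/p_+}$ over-counts heavily overlapping balls, one orders the $B^{(j)}$ by size and carries out a Calder\'{o}n--Zygmund-type stopping-time/packing argument to replace the over-counting bound by one that records the genuine overlap of the $\{B^{(j)}\}$, then uses $p_+\le1$ once more to pass from the resulting $\ell^{p_+}$-sum back to $\sum_j\lambda_j$. Alternatively, this inequality may be quoted from, or deduced by the methods of, the atomic theory of variable Hardy spaces in \cite{ns12,cw14,lwyy17}. The two reduction steps are routine; controlling the overlap of the balls is the main difficulty.
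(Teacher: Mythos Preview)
Your reduction is correct and coincides with the paper's argument: one direction is Remark~\ref{2r2}(ii); for the other, bound the numerator by $\|f\|_{\mathcal{L}_{\vp,q,s}^A(\rn)}\sum_j\lambda_j$, use $\eta\le1$ to minorize the $\ell^\eta$-expression by $\sum_j a_j\mathbf 1_{B^{(j)}}$, and finish with the scale-free inequality
\[
\sum_{j=1}^m\lambda_j\le\lf\|\sum_{j=1}^m a_j\mathbf 1_{B^{(j)}}\r\|_{\lv},\qquad a_j:=\frac{\lambda_j}{\|\mathbf 1_{B^{(j)}}\|_{\lv}}.
\]
The paper does exactly this, recording only ``$p_+\in(0,1]$, $\eta\in(0,1]$, and the monotonicity of $\ell^\eta$'' as justification.

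The gap in your proposal is the treatment of this last displayed inequality. It needs no stopping-time, packing, or atomic-theory machinery; it is the \emph{reverse Minkowski inequality} for $\lv$ when $p_+\le1$, valid with constant~$1$ regardless of how the balls overlap. Indeed, for nonnegative $f,g$ and $p_+\le1$ one has $\|f+g\|_{\lv}\ge\|f\|_{\lv}+\|g\|_{\lv}$: letting $A:=\|f\|_{\lv}$, $B:=\|g\|_{\lv}$ and any $\lambda<A+B$, write $\lambda=A'+B'$ with $A'<A$, $B'<B$; since $t\mapsto t^{p(x)}$ is concave for every $x$,
\[
\varrho_{p(\cdot)}\!\lf(\frac{f+g}{\lambda}\r)
\ge\frac{A'}{A'+B'}\,\varrho_{p(\cdot)}\!\lf(\frac{f}{A'}\r)
+\frac{B'}{A'+B'}\,\varrho_{p(\cdot)}\!\lf(\frac{g}{B'}\r)>1,
\]
because $A'<\|f\|_{\lv}$ forces $\varrho_{p(\cdot)}(f/A')>1$ and similarly for $g$. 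Iterating and taking $g_j=a_j\mathbf 1_{B^{(j)}}$ gives $\sum_j\lambda_j=\sum_j\|a_j\mathbf 1_{B^{(j)}}\|_{\lv}\le\|\sum_j a_j\mathbf 1_{B^{(j)}}\|_{\lv}$. Replace your sketched Calder\'on--Zygmund argument by this two-line computation and the proof is complete and essentially identical to the paper's.
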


\begin{proof}
Let $p(\cdot)$, $p_+$, $q$, $\eta$, and $s$ be as in the present proposition.
To prove this proposition, by Remark \ref{2r2}(ii), it suffices to show that
$\mathcal{L}_{\vp,q,s}^A(\rn)\subset\mathcal{L}_{\vp,q,s,\eta}^A(\rn)$
and the inclusion is continuous. For this purpose, let $f\in
\mathcal{L}_{\vp,q,s}^A(\rn)$. Then, from the assumptions that
$p_+\in(0,1]$ and $\eta\in(0,1]$, together with the monotonicity of
$\ell^{\eta}$ and Remark \ref{2r2}(i), we deduce that
\begin{align*}
\|f\|_{\mathcal{L}_{\vp,q,s,\eta}^A(\rn)}
&\ls \sup
\lf(\sum_{i=1}^m{\lambda}_i\r)^{-1}
\sum_{j=1}^m\lf\{
\frac{{\lambda}_j|B^{(j)}|}
{\|{\mathbf{1}}_{B^{(j)}}\|_{\lv}}\lf[\frac1{|B^{(j)}|}\int_{B^{(j)}}\lf|
f(x)-P^s_{B^{(j)}}f(x)\r|^q \,dx\r]^\frac1q\r\}\\
&\ls \sup\lf(\sum_{i=1}^m\lambda_i\r)^{-1}
\sum_{j=1}^m{\lambda}_j
\||f\||_{\mathcal{L}_{\vp,q,s}^A(\rn)}
\sim \|f\|_{\mathcal{L}_{\vp,q,s}^A(\rn)},
\end{align*}
where the supremum is taken over all $m\in\nn$, $\{B^{(j)}\}_{j=1}^m\subset \BB$, and
$\{\lambda_j\}_{j=1}^m\subset[0,\infty)$ with $\sum_{j=1}^m\lambda_j\neq0$.
This finishes the proof of Proposition \ref{2p2}.
\end{proof}

\begin{remark}
By Proposition \ref{2p2} with $p(\cdot)\equiv p\in(0,1]$ and \cite[Remark 11(iii)]{hlyy19},
we find that the new introduced Campanato-type space as
in Definition \ref{2d5} includes the classical
Campanato space $L_{\frac1p-1,\,q,\,s}(\rn)$,
introduced by Campanato \cite{c64}, and the
space $\mathop{\mathrm{BMO}}(\rn)$, introduced by
John and Nirenberg \cite{jn61}, as special cases.
\end{remark}

\section{Duality between $\vh$ and $\mathcal{L}_{\vp,q',s,\underline{p}}^A(\rn)$\label{s2}}

In this section, we establish the relation between the
anisotropic variable Campanato-type space and the
anisotropic variable Hardy space
via duality. To be precise, based on the structure of the anisotropic
variable Campanato-type space introduced in Definition
\ref{2d5}, and applying both the atomic and
the finite atomic characterizations of the anisotropic variable
Hardy space $\vh$ established in \cite{lwyy17} as well as some basic
tools from functional analysis, we show that the new
introduced Campanato-type space
$\mathcal{L}_{\vp,q',s,\underline{p}}^A(\rn)$ is the dual space of
the Hardy space $\vh$ for the full range $p(\cdot)\in C^{\log}(\rn)$,
which extends the known dual result of \cite{w}.

Recall that a $C^\infty(\rn)$
function $\varphi$ is called a \emph{Schwartz function} if,
for any $m\in\zz_+$ and multi-index $\az\in\zz_+^n$,
$$\|\varphi\|_{\alpha,m}:=
\sup_{x\in\rn}[\rho(x)]^m
|\partial^\alpha\varphi(x)|<\infty.$$
Denote by
$\cs(\rn)$ the set of all Schwartz functions, equipped
with the topology determined by
$\{\|\cdot\|_{\alpha,m}\}_{\az\in\zz_+^n,m\in\zz_+}$,
and $\cs'(\rn)$ the \emph{dual space} of $\cs(\rn)$, equipped
with the weak-$\ast$ topology.
For any $N\in\mathbb{Z}_+$, let
$$\cs_N(\rn):=\{\varphi\in\cs(\rn):\
\|\varphi\|_{\alpha,\ell}\leq1,\
|\alpha|\leq N,\ \ell\leq N\},$$
equivalently,
\begin{align*}
\varphi\in\cs_N(\rn)\Longleftrightarrow
\|\varphi\|_{\cs_N(\rn)}:=\sup_{|\alpha|\leq N}
\sup_{x\in\rn}\lf[\lf|\partial^\alpha
\varphi(x)\r|\max\lf\{1,\lf[
\rho(x)\r]^N\r\}\r]\leq1.
\end{align*}
Let $\lambda_-$, $\lambda_+\in(1,\fz)$
be two \emph{numbers} such that
$$\lambda_-<\min\{|\lambda|:\
\lambda\in\sigma(A)\}
\leq\max\{|\lambda|:\
\lambda\in\sigma(A)\}<\lambda_+.$$
We should point out that, if $A$ is diagonalizable over
$\mathbb{C}$, then we may let
$\lambda_-:=\min\{|\lambda|:\
\lambda\in\sigma(A)\}$
and
$\lambda_+:=\max\{|\lambda|:\
\lambda\in\sigma(A)\}$.
Otherwise, we may choose them sufficiently close to these equalities
in accordance with what we need in our arguments.

Applying \cite[Definition 2.4 and Theorem 3.10]{lwyy17},
we now recall the following equivalent definition
of the anisotropic variable Hardy space $\vh$
via the radial maximal function.

\begin{definition}\label{2d4}
Let $p(\cdot)\in C^{\log}(\rn)$ and $\varphi\in\cs(\rn)$ satisfy $\int_{\rn}\varphi(x)\,dx\neq0$.
The \emph{anisotropic variable Hardy space} $\vh$ is defined
to be the set of all $f\in\cs'(\rn)$ such that
$$\|f\|_{\vh}:=\lf\| M_\varphi^0(f)\r\|_{\lv}<\fz,$$
where $M_\varphi^0(f)$ is defined by setting, for any $x\in\rn$,
\begin{equation*}
M_\varphi^0(f)(x):= \sup_{k\in\zz}
\lf|f\ast\varphi_k(x)\r|,
\end{equation*}
here and thereafter, for any $k\in\zz$,
$\varphi_k(\cdot):=b^{k}\varphi(A^{k}\cdot)$.
\end{definition}

We also need the following notions
of anisotropic variable $(\vp,r,s)$-atoms and anisotropic
variable finite atomic Hardy spaces from \cite[Definitions 4.1 and 5.1]{lwyy17}.

\begin{definition}\label{3d1}
Let $\vp\in\mathcal{P}(\rn)$, $r\in(1,\fz]$, and $s$
be as in \eqref{4e1}.
A measurable function $a$ on $\rn$ is called an
\emph{anisotropic variable $(\vp,r,s)$-atom} if
\begin{enumerate}
\item[{\rm (i)}] $\supp a:=\{x\in\rn:\ a(x)\neq0\} \subset B\in\mathfrak{B}$;
\item[{\rm (ii)}] $\|a\|_{L^r(\rn)}\le \frac{|B|^{1/r}}{\|{\mathbf 1}_B\|_{\lv}}$;
\item[{\rm (iii)}] for any $\gamma\in\zz_+^n$ with $|\gamma|\le s$,
$\int_{\rn}a(x)x^\gamma\,dx=0$.
\end{enumerate}
\end{definition}

\begin{definition}\label{3d2}
Let $p(\cdot)\in C^{\log}(\rn)$, $r\in(1,\fz]$, and $s$ be as in \eqref{4e1}.
The \emph{anisotropic variable finite atomic Hardy space}
$\vfah$ is defined to be the set of all
$f\in\cs'(\rn)$ satisfying that there exist
an $I\in\nn$,
$\{\lz_i\}_{i=1}^I\subset[0,\fz)$, and
a finite sequence $\{a_i\}_{i=1}^I$ of $(\vp,r,s)$-atoms
supported, respectively, in
$\{B^{(i)}\}_{i=1}^I\subset\mathfrak{B}$
such that $f=\sum_{i=1}^I\lambda_ia_i$ in $\cs'(\rn)$.
Moreover, for any $f\in\vfah$, let
\begin{align*}
\|f\|_{\vfah}:=
{\inf}\lf\|\lf\{\sum_{i=1}^{I}
\lf[\frac{\lz_i}{\|{\mathbf 1}_{B^{(i)}}\|_{\lv}}\r]^
{\underline{p}}{\mathbf 1}_{B^{(i)}}\r\}^{1/\underline{p}}\r\|_{\lv},
\end{align*}
where the infimum is taken over all decompositions of $f$ as above and,
here and thereafter,
$\underline{p}:=\min\{p_-,1\}$ with $p_-$ as in \eqref{2e3}.
\end{definition}

Next, we state the dual result between $\vh$ and
$\mathcal{L}_{\vp,q',s,\underline{p}}^A(\rn)$ as follows.

\begin{theorem}\label{2t1}
Let $p(\cdot)\in C^{\log}(\rn)$, $q\in(\max\{1,p_+\},{\infty}]$ with
$p_+$ as in \eqref{2e3}, and \begin{align}\label{4e1}
s\in\lf[\lf\lfloor\lf(\dfrac1{p_-}-1\r)
\dfrac{\ln b}{\ln\lambda_-}\r\rfloor,\fz\r)\cap\zz_+,
\end{align}
where $\lambda_-$ and $p_-$ are as in \eqref{2e5} and \eqref{2e3},
respectively. Then the dual space of $\vh$, denoted by $(\vh)^*$,
is $\mathcal{L}_{\vp,q',s,\underline{p}}^A(\rn)$ in the following sense:
\begin{enumerate}
\item[{\rm (i)}] Let $g\in\mathcal{L}_{\vp,q',s,\underline{p}}^A(\rn)$.
Then the linear functional
\begin{align}\label{2te1}
\Lambda_g:\ f\rightarrow \Lambda_g(f):=\int_{\rn}f(x)g(x)\,dx,
\end{align}
initially defined for any $f\in H_{A,{\rm fin}}^{\vp,q,s}(\rn)$,
has a bounded extension to $\vh$.

\item[{\rm (ii)}] Conversely, any continuous linear
functional on $\vh$ arises as in \eqref{2te1}
with a unique $g\in\mathcal{L}_{\vp,q',s,\underline{p}}^A(\rn)$.
\end{enumerate}
Moreover,
$\|g\|_{\mathcal{L}_{\vp,q',s,\underline{p}}^A(\rn)}\sim\|\Lambda_g\|_{(\vh)^*}$,
where the positive equivalence constants
are independent of $g$.
\end{theorem}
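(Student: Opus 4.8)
The plan is to follow the classical Fefferman--Stein/Taibleson--Weiss scheme, adapted to the anisotropic variable setting, with the finite atomic decomposition of $\vh$ providing the crucial bridge. I would prove (i) first. Fix $g\in\mathcal{L}_{\vp,q',s,\underline{p}}^A(\rn)$ and let $f=\sum_{i=1}^I\lambda_ia_i\in H_{A,{\rm fin}}^{\vp,q,s}(\rn)$ be a finite linear combination of $(\vp,q,s)$-atoms supported in $\{B^{(i)}\}_{i=1}^I$. Since each $a_i$ has vanishing moments up to order $s$, we may replace $g$ by $g-P^s_{B^{(i)}}g$ on $B^{(i)}$ without changing the pairing, so that
\begin{align*}
|\Lambda_g(f)|\le\sum_{i=1}^I|\lambda_i|\int_{B^{(i)}}|a_i(x)|\,|g(x)-P^s_{B^{(i)}}g(x)|\,dx.
\end{align*}
Applying H\"older's inequality with exponents $q$ and $q'$, together with the size condition $\|a_i\|_{L^q(\rn)}\le|B^{(i)}|^{1/q}/\|{\mathbf 1}_{B^{(i)}}\|_{\lv}$, each summand is bounded by $|\lambda_i|\,\frac{|B^{(i)}|}{\|{\mathbf 1}_{B^{(i)}}\|_{\lv}}[\frac1{|B^{(i)}|}\int_{B^{(i)}}|g-P^s_{B^{(i)}}g|^{q'}\,dx]^{1/q'}$. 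Summing over $i$ and recognizing the right-hand side as precisely the defining sum in $\|g\|_{\mathcal{L}_{\vp,q',s,\underline{p}}^A(\rn)}$ against the test data $\{B^{(i)},\lambda_i\}$, we obtain
\begin{align*}
|\Lambda_g(f)|\le\|g\|_{\mathcal{L}_{\vp,q',s,\underline{p}}^A(\rn)}\lf\|\lf\{\sum_{i=1}^I\lf[\frac{\lambda_i}{\|{\mathbf 1}_{B^{(i)}}\|_{\lv}}\r]^{\underline p}{\mathbf 1}_{B^{(i)}}\r\}^{1/\underline p}\r\|_{\lv}.
\end{align*}
Taking the infimum over all finite atomic decompositions of $f$ gives $|\Lambda_g(f)|\lesssim\|g\|_{\mathcal{L}_{\vp,q',s,\underline{p}}^A(\rn)}\|f\|_{\vfah}$, and by \cite[Theorem 5.4]{lwyy17} (equivalence of $\|\cdot\|_{\vfah}$ and $\|\cdot\|_{\vh}$ on the dense subspace $H_{A,{\rm fin}}^{\vp,q,s}(\rn)$, using $q>\max\{1,p_+\}$) this is $\lesssim\|g\|_{\mathcal{L}_{\vp,q',s,\underline{p}}^A(\rn)}\|f\|_{\vh}$. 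Hence $\Lambda_g$ extends boundedly to $\vh$ with $\|\Lambda_g\|_{(\vh)^*}\lesssim\|g\|_{\mathcal{L}_{\vp,q',s,\underline{p}}^A(\rn)}$; density of $H_{A,{\rm fin}}^{\vp,q,s}(\rn)$ in $\vh$ makes the extension unique.

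For (ii), given $L\in(\vh)^*$, I would reconstruct $g$ locally. Fix a dilated ball $B\in\BB$ and consider the space $L^q_B(\rn)$ of $L^q$ functions supported in $B$ with vanishing moments up to order $s$; for any such $\phi$ not identically zero, a suitable normalization $\phi/(\|\phi\|_{L^q}\|{\mathbf 1}_B\|_{\lv}/|B|^{1/q})$ is a $(\vp,q,s)$-atom, so $\|\phi\|_{\vh}\lesssim\|\phi\|_{L^q(\rn)}|B|^{-1/q}\|{\mathbf 1}_B\|_{\lv}$ and $L$ restricted to $L^q_B(\rn)$ is a bounded functional there. By the Riesz representation theorem on $L^q_B(\rn)\hookrightarrow L^q(B)$ (using $q>1$, so $q'<\infty$), there exists $g_B\in L^{q'}(B)$, unique modulo $\mathbb P_s(\rn)$, with $L(\phi)=\int_B\phi g_B$ for all $\phi\in L^q_B(\rn)$. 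A standard consistency argument over an exhausting increasing sequence of balls $B_k=A^k\Delta$ — using uniqueness modulo polynomials and adjusting the polynomial parts — produces a single $g\in L^{q'}_{\rm loc}(\rn)$ with $L(\phi)=\int_{\rn}\phi g$ for every $\phi\in H_{A,{\rm fin}}^{\vp,q,s}(\rn)$. It remains to show $g\in\mathcal{L}_{\vp,q',s,\underline{p}}^A(\rn)$ with $\|g\|_{\mathcal{L}_{\vp,q',s,\underline{p}}^A(\rn)}\lesssim\|L\|_{(\vh)^*}$. Given test data $\{B^{(j)},\lambda_j\}_{j=1}^m$, I would for each $j$ choose, via the duality $(L^{q'}(B^{(j)})/\mathbb P_s)^*=L^q_{B^{(j)}}$, a function $h_j\in L^q_{B^{(j)}}(\rn)$ with $\|h_j\|_{L^q(\rn)}\le|B^{(j)}|^{1/q'}$ (i.e.\ $\|h_j\|_{L^{q'}(\rn)'}$-normalized) realizing $\int_{B^{(j)}}h_j\,g=\int_{B^{(j)}}h_j\,(g-P^s_{B^{(j)}}g)\ge\frac12[\int_{B^{(j)}}|g-P^s_{B^{(j)}}g|^{q'}]^{1/q'}|B^{(j)}|^{1/q}$, say; then set $b_j:=\lambda_j|B^{(j)}|\|{\mathbf 1}_{B^{(j)}}\|_{\lv}^{-1}h_j$ and $f:=\sum_j b_j$. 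A computation shows $f\in H_{A,{\rm fin}}^{\vp,q,s}(\rn)$ with $\|f\|_{\vh}\lesssim\|f\|_{\vfah}\lesssim\|\{\sum_j[\lambda_j/\|{\mathbf 1}_{B^{(j)}}\|_{\lv}]^{\underline p}{\mathbf 1}_{B^{(j)}}\}^{1/\underline p}\|_{\lv}$, while $L(f)=\sum_j\int b_j g\gtrsim\sum_j\lambda_j|B^{(j)}|\|{\mathbf 1}_{B^{(j)}}\|_{\lv}^{-1}[\frac1{|B^{(j)}|}\int_{B^{(j)}}|g-P^s_{B^{(j)}}g|^{q'}]^{1/q'}$. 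Combining $|L(f)|\le\|L\|_{(\vh)^*}\|f\|_{\vh}$ with these two estimates and taking the supremum over all admissible data yields $\|g\|_{\mathcal{L}_{\vp,q',s,\underline{p}}^A(\rn)}\lesssim\|L\|_{(\vh)^*}$. Uniqueness of $g$ modulo $\mathbb P_s(\rn)$ (identified to a single element per Remark \ref{2r1}(i)) follows because $H_{A,{\rm fin}}^{\vp,q,s}(\rn)$ is dense in $\vh$ and annihilates no nontrivial class.

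The main obstacle I anticipate is twofold and both parts live in step (ii). First, the passage from boundedness of $L$ on each individual atomic space $L^q_B$ to the \emph{uniform} control needed to place $g$ in the Campanato-type space requires carefully converting an arbitrary admissible family $\{B^{(j)},\lambda_j\}$ into a single element of $\vfah$ whose norm is controlled by the $L^{\vp}$-quantity appearing in Definition \ref{2d5}; this is exactly where the non-concave quasi-norm difficulty (highlighted in the introduction when $0<p_-\le1<p_+<\infty$) bites, since one cannot simply sum atom-by-atom, and one must instead exploit the built-in $\ell^{\underline p}$-aggregation in the definition of $\|f\|_{\vfah}$ together with the finite atomic characterization of \cite[Theorem 5.4]{lwyy17} to absorb the combinatorics. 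Second, the consistency/patching of the locally-defined $g_B$ into a global $g$ — keeping track of the polynomial ambiguity across the nested balls $\{A^k\Delta\}_{k\in\zz}$ — needs care, though it is essentially the anisotropic analogue of the classical argument of Taibleson--Weiss. The remaining ingredients (H\"older, atom size conditions, the density of $H_{A,{\rm fin}}^{\vp,q,s}(\rn)$ in $\vh$, and Riesz representation) are standard once the correct test objects are in hand.
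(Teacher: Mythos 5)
Your overall strategy — Hölder against the Campanato-type norm for (i), local Riesz representation plus a test-data estimate for (ii) — matches the paper's argument in its essential structure, and part (ii) in particular follows the same lines as the paper (which outsources the reconstruction of $g$ to \cite[Theorem 7.4]{hlyy19} and then proves the $\mathcal{L}_{\vp,q',s,\underline{p}}^A$-bound via atoms built from extremizers $\eta_j$, just as you do with your $h_j$). However, there is a genuine gap in your treatment of part (i): the theorem allows $q=\infty$, and the finite-atom equivalence you invoke (\cite[Theorem 5.4]{lwyy17}, i.e.\ Lemma \ref{3l2}) only gives $\|\cdot\|_{\vfah}\sim\|\cdot\|_{\vh}$ on $\vfah$ when $q$ is \emph{finite}; when $q=\infty$ it gives the equivalence only on $\vfahfz\cap C(\rn)$. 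Your chain ``infimum over finite atomic decompositions $\Rightarrow\|f\|_{\vfah}\Rightarrow\|f\|_{\vh}$'' therefore breaks for general $f\in\vfahfz$ that are not continuous, and you never establish that the bounded extension obtained from the dense subspace $\vfahfz\cap C(\rn)$ actually satisfies $\Lambda_g(f)=\int_{\rn}fg$ for \emph{all} $f\in\vfahfz$, which is precisely what statement (i) of the theorem asserts.

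The paper's proof fills exactly this gap with a nontrivial extra step: for $q=\infty$ it first deduces boundedness on $\vfahfz\cap C(\rn)$ via Lemma \ref{3l3}, and then, for a general $f\in\vfahfz$ with compact support, mollifies $f$ by $\varphi_{t_k}\ast f$ (each of which is a continuous finite linear combination of $(\vp,\infty,s)$-atoms), shows $\|f-\varphi_{t_k}\ast f\|_{\vh}\to0$ via \cite[Theorem 2.1]{d01} and Lemma \ref{3l1}, and then passes to the limit in $\Lambda_g(\varphi_{t_k}\ast f)=\int(\varphi_{t_k}\ast f)g$ using dominated convergence (the domination $|(\varphi_{t_k}\ast f)g|\le\|f\|_{L^\infty}\mathbf{1}_{B(\mathbf 0,L+1)}|g|\in L^1$ is what makes this work). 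You would need to add this approximation argument, or else restrict (i) to $q<\infty$ and handle $q=\infty$ separately. The rest of your proposal — including the correct identification of the Campanato-type quantity as the thing controlling $|\Lambda_g(f)|$, and the construction of the atoms $a_j$ from near-extremizing $\eta_j$ in part (ii) — is sound and mirrors the paper's reasoning.
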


To prove Theorem \ref{2t1}, we need the atomic and the
finite atomic characterizations of $\vh$ as follows,
which were established in \cite[Theorems 4.8 and 5.4]{lwyy17}.

\begin{lemma}\label{3l1}
Let $\vp$, $q$, and $s$ be as in Theorem \ref{2t1},
$\{a_j\}_{j\in\nn}$ a sequence of $(\vp,q,s)$-atoms supported,
respectively, in $\{B^{(j)}\}_{j\in\nn}\subset\BB$, and
$\{{\lambda}_j\}_{j\in\nn}\subset[0,\infty)$ such that
$$\left\|\left\{\sum_{j\in\nn}
\left[\frac{{\lambda}_j}{\|{\mathbf{1}}_{B^{(j)}}
\|_{\lv}}\right]^{\underline{p}}\mathbf{1}_{B^{(j)}}
\right\}^{\frac1{\underline{p}}}\right\|_{\lv}<\fz.$$
Then the series $f=\sum_{j\in\nn}{\lambda}_ja_j$
converges in $\vh$, $f\in \vh$, and there
exists a positive constant $C$, independent of $f$, such that
$$\lf\|f\r\|_{\vh}\le C\left\|\left\{\sum_{j\in\nn}
\left[\frac{{\lambda}_j}{\|{\mathbf{1}}_{B^{(j)}}
\|_{\lv}}\right]^{\underline{p}}\mathbf{1}_{B^{(j)}}
\right\}^{\frac1{\underline{p}}}\right\|_{\lv}.$$
\end{lemma}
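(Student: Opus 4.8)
The plan is to reduce the statement to a pointwise estimate for the radial maximal function $M_\varphi^0$ and then to control its $\lv$-quasi-norm by the right-hand side. First I would check that $f:=\sum_{j\in\nn}\lz_ja_j$ converges in $\cs'(\rn)$: using the size bound $\|a_j\|_{L^q(\rn)}\le|B^{(j)}|^{1/q}\|\mathbf{1}_{B^{(j)}}\|_{\lv}^{-1}$, the vanishing moments, and the standard two-sided estimates of $\|\mathbf{1}_{B}\|_{\lv}$ that follow from the log-H\"older continuity of $\vp$, one obtains $\sum_{j}\lz_j|\langle a_j,\phi\rangle|<\fz$ for every $\phi\in\cs(\rn)$, so $f\in\cs'(\rn)$ is well defined. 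Granting the a priori bound for finite sums established below, applying it to the tails $\sum_{j=N_1}^{N_2}\lz_ja_j$ and using the absolutely continuous quasi-norm of $\lv$ (here $p_+<\fz$) shows that the partial sums form a Cauchy sequence in the complete quasi-Banach space $\vh$; its limit coincides with $f$ by the $\cs'$-convergence just noted.

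The heart of the matter is thus the inequality for finite sums, where every object is a genuine function and $M_\varphi^0(\sum_j\lz_ja_j)\le\sum_j\lz_jM_\varphi^0(a_j)$ holds trivially. For each $j$, writing $B^{(j)}=x_j+B_{\ell_j}$, I would split $M_\varphi^0(a_j)=G_j+H_j$ with $G_j:=M_\varphi^0(a_j)\mathbf{1}_{A^{i_0}B^{(j)}}$ and $H_j:=M_\varphi^0(a_j)\mathbf{1}_{(A^{i_0}B^{(j)})^\complement}$, where $i_0\in\zz$ is a fixed constant depending only on $A$. For $G_j$ I would use the elementary domination $M_\varphi^0(g)\ls\HL(g)$, valid for any $g\in L^1_{\loc}(\rn)$ since $\varphi$ is a Schwartz function. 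For $H_j$ I would exploit the cancellation $\int_{\rn}a_j(y)y^\gz\,dy=0$ for $|\gz|\le s$ by subtracting, inside $a_j\ast\varphi_k$, the degree-$s$ Taylor polynomial in $y$ about $x_j$ of $y\mapsto\varphi_k(x-y)$; the Schwartz decay of $\varphi$ together with the anisotropic comparisons governed by $\lambda_-$ then yield, for $x\notin A^{i_0}B^{(j)}$,
$$H_j(x)\ls\frac{1}{\|\mathbf{1}_{B^{(j)}}\|_{\lv}}\lf[\frac{b^{\ell_j}}{\rho(x-x_j)}\r]^{(s+1)\frac{\ln\lambda_-}{\ln b}+1}\ls\frac{1}{\|\mathbf{1}_{B^{(j)}}\|_{\lv}}\lf[\HL(\mathbf{1}_{B^{(j)}})(x)\r]^{\frac1{\underline{p}}+\ve}$$
for some $\ve\in(0,\fz)$, where the last step uses $\HL(\mathbf{1}_{B^{(j)}})(x)\sim b^{\ell_j}/\rho(x-x_j)$ off $A^{i_0}B^{(j)}$ and the restriction \eqref{4e1} on $s$, which forces $(s+1)\ln\lambda_-/\ln b>1/\underline{p}-1$.

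Summing then breaks into two pieces. For the $H_j$-part, subadditivity of $t\mapsto t^{\underline{p}}$ (recall $\underline{p}\le1$) gives $(\sum_j\lz_jH_j)^{\underline{p}}\ls\sum_j[\lz_j/\|\mathbf{1}_{B^{(j)}}\|_{\lv}]^{\underline{p}}[\HL(\mathbf{1}_{B^{(j)}})]^{1+\underline{p}\ve}$; passing to $\lv$ and raising to the power $1/\underline{p}$, the surplus exponent $\ve$ lets the variable-exponent Fefferman--Stein-type vector-valued maximal inequality, applied in $L^{\vp/\underline{p}}(\rn)$ whose lower exponent $p_-/\underline{p}\ge1$, replace each $[\HL(\mathbf{1}_{B^{(j)}})]^{1+\underline{p}\ve}$ by $\mathbf{1}_{B^{(j)}}$, producing exactly the claimed bound. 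For the $G_j$-part I would invoke the technical lemma (available because $q\in(\max\{1,p_+\},\fz]$) that families normalized as above satisfy $\|\sum_j\lz_j\HL(a_j)\|_{\lv}\ls\|\{\sum_j[\lz_j/\|\mathbf{1}_{B^{(j)}}\|_{\lv}]^{\underline{p}}\mathbf{1}_{B^{(j)}}\}^{1/\underline{p}}\|_{\lv}$; this follows by combining the $L^q(\rn)$-boundedness of $\HL$, H\"older's inequality to compare the $L^q$-average of $\HL(a_j)$ over $A^{i_0}B^{(j)}$ with $\|\mathbf{1}_{B^{(j)}}\|_{\lv}^{-1}$, and once more the variable maximal inequality. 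The case $q=\fz$ is the simplest: then $\|a_j\|_{L^\fz(\rn)}\le\|\mathbf{1}_{B^{(j)}}\|_{\lv}^{-1}$ and $G_j\ls\|\mathbf{1}_{B^{(j)}}\|_{\lv}^{-1}\mathbf{1}_{A^{i_0}B^{(j)}}$ at once.

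The main obstacle is the borderline regime $p_-\le1$, where $\underline{p}=p_-$ and the naive vector-valued maximal inequality in $L^{\vp/\underline{p}}(\rn)$ is unavailable since its lower exponent equals $1$; overcoming this is precisely what the surplus decay $[\HL(\mathbf{1}_{B^{(j)}})]^{\ve}$, produced by taking $s$ as large as \eqref{4e1} demands, is designed for, and getting this trade-off right --- together with the routine but somewhat delicate verification of the $\cs'$-convergence of the series (particularly when $q=\fz$) --- is where the real work lies.
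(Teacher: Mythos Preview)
The paper does not give its own proof of this lemma: it is quoted verbatim as a known result, namely \cite[Theorem 4.8]{lwyy17}, and is used as a black box throughout. So there is no ``paper's proof'' to compare against here.

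That said, your sketch is the standard route and matches how results of this type (including the cited one) are proved: split $M_\varphi^0(a_j)$ into a near-ball piece controlled by $\HL(a_j)$ and a far piece controlled, after subtracting the degree-$s$ Taylor polynomial and using the moment cancellation, by a power of $\HL(\mathbf{1}_{B^{(j)}})$; then sum using the variable-exponent vector-valued maximal inequality. One imprecision to fix: you propose to apply the Fefferman--Stein inequality in $L^{p(\cdot)/\underline{p}}(\rn)$, whose lower exponent is exactly $1$ when $p_-\le1$, and the inequality is not available there. The cure your surplus $\ve$ actually provides is different: choose $r:=\underline{p}/(1+\underline{p}\ve)<\underline{p}\le p_-$, rewrite $[\HL(\mathbf{1}_{B^{(j)}})]^{1+\underline{p}\ve}=[\HL(\mathbf{1}_{B^{(j)}})]^{\underline{p}/r}$, and apply the vector-valued maximal inequality in $L^{p(\cdot)/r}(\rn)$ with inner exponent $\underline{p}/r>1$. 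With that adjustment your outline is correct; the reference \cite[Lemma~4.4]{lwyy17} (appearing here as Lemma~\ref{5l2}) packages exactly this step.
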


\begin{lemma}\label{3l2}
Let $p(\cdot)\in C^{\log}(\rn)$ and $s$ be as in \eqref{4e1}.
\begin{enumerate}
\item[{\rm (i)}]
If $r\in(\max\{1,p_+\},\fz)$ with $p_+$ as in
\eqref{2e3}, then $\|\cdot\|_{\vfah}$
and $\|\cdot\|_{\vh}$ are equivalent quasi-norms on $\vfah$;
\item[{\rm (ii)}]
$\|\cdot\|_{\vfahfz}$
and $\|\cdot\|_{\vh}$ are equivalent quasi-norms on
$\vfahfz\cap C(\rn)$,
where $C(\rn)$ denotes the set of all continuous functions
on $\rn$.
\end{enumerate}
\end{lemma}

The following lemma is also needed for establishing
the dual theorem, whose proof is similar to
that of \cite[Proposition 3.13]{zhyy}; we omit the details.

\begin{lemma}\label{3l3}
Let $\vp$ and $s$ be as in Lemma \ref{3l2}. Then
$\vfahfz\cap C(\rn)$ is dense in $\vh$.
\end{lemma}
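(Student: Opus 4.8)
The plan is to approximate an arbitrary $f\in\vh$ first by a finite linear combination of $(\vp,\fz,s)$-atoms and then to smooth out that combination so as to make it continuous without destroying its atomic structure. Concretely, given $f\in\vh$, I would first invoke the atomic characterization of $\vh$ (Lemma \ref{3l1} together with \cite[Theorem 4.8]{lwyy17}, taking the atom exponent to be $\fz$) to write $f=\sum_{j\in\nn}\lz_j a_j$ in both $\cs'(\rn)$ and $\vh$, where each $a_j$ is a $(\vp,\fz,s)$-atom supported in $B^{(j)}\in\BB$ and $\|\{\sum_{j\in\nn}[\lz_j/\|\mathbf 1_{B^{(j)}}\|_{\lv}]^{\underline p}\mathbf 1_{B^{(j)}}\}^{1/\underline p}\|_{\lv}<\fz$. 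Applying Lemma \ref{3l1} to the tail $\sum_{j>N}\lz_j a_j$ shows that this tail has $\vh$-quasi-norm controlled by $\|\{\sum_{j>N}[\lz_j/\|\mathbf 1_{B^{(j)}}\|_{\lv}]^{\underline p}\mathbf 1_{B^{(j)}}\}^{1/\underline p}\|_{\lv}$, which tends to $0$ as $N\to\fz$; hence, for $N$ large, $g:=\sum_{j=1}^N\lz_j a_j\in\vfahfz$ is as close to $f$ in $\vh$ as we wish.

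Next I would approximate the fixed finite atomic combination $g$, in the $\vh$-quasi-norm, by an element of $\vfahfz\cap C(\rn)$ via an anisotropic mollification. Fix $\psi\in C^\fz(\rn)$ with compact support and $\int_{\rn}\psi(x)\,dx=1$, and for $\ell\in\nn$ set $\psi_\ell(\cdot):=b^\ell\psi(A^\ell\cdot)$, so that $\{\psi_\ell\}_{\ell\in\nn}$ is an anisotropic approximate identity. I claim $g\ast\psi_\ell\in\vfahfz\cap C(\rn)$ for all large $\ell$. Indeed, $g\ast\psi_\ell\in C^\fz(\rn)$ as a convolution of a compactly supported $L^\fz$ function with a compactly supported $C^\fz$ function; since $\supp\psi_\ell$ shrinks to $\{\mathbf 0\}$, the support of $a_j\ast\psi_\ell$ is, for all large $\ell$, contained in a dilated ball $\widetilde B^{(j)}\in\BB$ obtained from $B^{(j)}$ by enlarging its scale by a fixed number of steps depending only on $A$; each $a_j\ast\psi_\ell$ has vanishing moments up to order $s$, because for $\gz\in\zz_+^n$ with $|\gz|\le s$ one has $\int_{\rn}(a_j\ast\psi_\ell)(x)x^\gz\,dx=\int_{\rn}a_j(y)[\int_{\rn}\psi_\ell(z)(y+z)^\gz\,dz]\,dy$ and the inner integral is a polynomial in $y$ of degree at most $s$; and, since $\|a_j\ast\psi_\ell\|_{L^\fz(\rn)}\le\|\psi\|_{L^1(\rn)}\|\mathbf 1_{B^{(j)}}\|_{\lv}^{-1}\ls\|\mathbf 1_{\widetilde B^{(j)}}\|_{\lv}^{-1}$, using $\|\mathbf 1_{\widetilde B^{(j)}}\|_{\lv}\sim\|\mathbf 1_{B^{(j)}}\|_{\lv}$, each $a_j\ast\psi_\ell$ is a fixed constant multiple (uniform in $\ell$) of a $(\vp,\fz,s)$-atom supported in $\widetilde B^{(j)}$. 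Hence $g\ast\psi_\ell=\sum_{j=1}^N\lz_j(a_j\ast\psi_\ell)$ is a finite $(\vp,\fz,s)$-atomic combination, i.e., $g\ast\psi_\ell\in\vfahfz\cap C(\rn)$.

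To finish, I would estimate $\|g-g\ast\psi_\ell\|_{\vh}$. Fix $r\in(\max\{1,p_+\},\fz)$. For each $j\in\{1,\dots,N\}$ the function $a_j-a_j\ast\psi_\ell$ is supported in $\widetilde B^{(j)}$ and has vanishing moments up to order $s$, so, with $\vaz_{j,\ell}:=\|a_j-a_j\ast\psi_\ell\|_{L^r(\rn)}$, the function $\vaz_{j,\ell}^{-1}|\widetilde B^{(j)}|^{1/r}\|\mathbf 1_{\widetilde B^{(j)}}\|_{\lv}^{-1}(a_j-a_j\ast\psi_\ell)$ is a $(\vp,r,s)$-atom; applying Lemma \ref{3l1} to this single atom gives $\|a_j-a_j\ast\psi_\ell\|_{\vh}\ls\vaz_{j,\ell}\,\|\mathbf 1_{\widetilde B^{(j)}}\|_{\lv}\,|\widetilde B^{(j)}|^{-1/r}$. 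Because $\{\psi_\ell\}_\ell$ is an approximate identity and $r<\fz$, one has $\vaz_{j,\ell}\to0$ as $\ell\to\fz$, while $\widetilde B^{(j)}$, and hence the remaining factor, is fixed; so, since $\|\cdot\|_{\vh}^{\underline p}$ is subadditive and the sum over $j$ is finite, $\|g-g\ast\psi_\ell\|_{\vh}\to0$ as $\ell\to\fz$. Combining the three steps, for any prescribed error we can first choose $N$ and then $\ell$ so that $g\ast\psi_\ell\in\vfahfz\cap C(\rn)$ is within that error of $f$ in $\vh$, which proves the density.

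I expect the main obstacle to be the bookkeeping that keeps the mollified pieces genuine atoms with uniformly controlled constants: one must verify the comparability $\|\mathbf 1_{\widetilde B^{(j)}}\|_{\lv}\sim\|\mathbf 1_{B^{(j)}}\|_{\lv}$ for the fixed enlargement $\widetilde B^{(j)}$ of $B^{(j)}$ and, crucially, that this enlargement can be taken independent of $\ell$, so that the normalization factor $|\widetilde B^{(j)}|^{1/r}\|\mathbf 1_{\widetilde B^{(j)}}\|_{\lv}^{-1}$ does not blow up; this independence is exactly what lets the $L^r$-smallness of $a_j-a_j\ast\psi_\ell$ be upgraded, through Lemma \ref{3l1}, to $\vh$-smallness. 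The remaining points (preservation of vanishing moments, the approximate-identity convergence in $L^r$, and the tail estimate) are routine.
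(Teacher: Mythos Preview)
Your proposal is correct and follows the standard mollification approach; the paper omits the proof of this lemma (referring to \cite[Proposition 3.13]{zhyy}), but your argument matches exactly the technique the paper itself employs in the proof of Theorem \ref{2t1}, Case 2, to establish \eqref{3e1}, where a single $(\vp,\infty,s)$-atom $a$ is approximated by $\varphi_t\ast a$ via the same normalize-to-a-$(\vp,r,s)$-atom trick combined with Lemma \ref{3l1}. The comparability $\|\mathbf 1_{\widetilde B^{(j)}}\|_{\lv}\sim\|\mathbf 1_{B^{(j)}}\|_{\lv}$ that you flag follows from Lemma \ref{5l2}, and the tail convergence in your first step follows from dominated convergence in $\lv$ (valid since $p_+<\infty$).
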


We now show Theorem \ref{2t1}.

\begin{proof}[Proof of Theorem \ref{2t1}]
Let all the notation be as in the present theorem.
We first prove (i) by considering the range of $q$
in the following two cases.

\emph{\textbf{Case 1}}. $q\in(\max\{1,p_+\},\infty)$. In this case, let $g\in\mathcal{L}_{\vp,q',s,\underline{p}}^A(\rn)$.
For any $f\in H_{A,\rm fin}^{\vp,q,s}(\rn)$,
from Definition \ref{3d2}, it follows that there exist
$\{\lambda_j\}_{j=1}^m\subset[0,\infty)$ and
$\{a_j\}_{j=1}^m$ of $(\vp,q,s)$-atoms supported, respectively, in the balls
$\{B^{(j)}\}_{j=1}^m\subset\BB$ such that $f=\sum_{j=1}^m\lambda_ja_j$ in $\cs'(\rn)$ and
$$
\lf\|\lf\{\sum_{j=1}^{m} \lf[\frac{\lambda_j}
{\|\mathbf{1}_{B^{(j)}}\|_{\lv}} \r]^{\underline{p}}\mathbf{1}_{B^{(j)}} \r\}
^{\frac{1}{\underline{p}}}\r\|_{\lv}
\sim\|f\|_{H_{A,\rm fin}^{\vp,q,s}(\rn)}.
$$
By this, the vanishing moments of $a_j$,
the H\"{o}lder inequality, the size condition of $a_j$,
Remark \ref{2r1}(ii), and Lemma \ref{3l2}(i), we conclude that
\begin{align*}
|\Lambda_g(f)|&
\le\sum_{j=1}^{m}\lambda_j\lf|\int_{\rn}a_j(x)g(x)\,dx\r|\\
&=\sum_{j=1}^{m}\lambda_j\inf_{P\in \cp_s(\rn)}\lf|\int_{B^{(j)}}a_j(x)\lf[g(x)-P(x)\r]\,dx\r|\noz\\
&\le\sum_{j=1}^{m}\frac{\lambda_j|B^{(j)}|}{\|\mathbf{1}_{B^{(j)}}\|_{\lv}}
\inf_{P\in \cp_s(\rn)}\lf[\frac1{|B^{(j)}|}
\int_{B^{(j)}}|g(x)-P(x)|^{q'}\,dx\r]^{\frac{1}{q'}}\noz\\
&\ls\|g\|_{\mathcal{L}_{\vp,q',s,\underline{p}}^A(\rn)}
\lf\|\lf\{\sum_{i=1}^{m}
\lf[\frac{{\lambda}_i}{\|{\mathbf{1}}_{B^{(i)}}\|_{\lv}}\r]^{\underline{p}}
{\mathbf{1}}_{B^{(i)}}\r\}^{\frac1{\underline{p}}}\r\|_{\lv}\noz\\
&\sim\|g\|_{\mathcal{L}_{\vp,q',s,\underline{p}}^A(\rn)}
\|f\|_{H_{A,\rm fin}^{\vp,q,s}(\rn)}
\sim\|g\|_{\mathcal{L}_{\vp,q',s,\underline{p}}^A(\rn)}
\|f\|_{\vh},\noz
\end{align*}
which, together with the fact that $H_{A,\rm fin}^{\vp,q,s}(\rn)$
is dense in $\vh$, further implies that (i) holds true in this case.

\emph{\textbf{Case 2}}. $q=\fz$. In this case, using Lemma \ref{3l3}
and repeating the proof of \emph{Case 1}, we then find that any
$g\in\mathcal{L}_{\vp,1,s,\underline{p}}^A(\rn)$ induces a
bounded linear functional on $\vh$, which is initially defined on
$H_{A,\rm fin}^{\vp,\fz,s}(\rn)\cap C{(\rn)}$
by setting, for any
$f\in H_{A,\rm fin}^{\vp,\fz,s}(\rn)\cap C{(\rn)}$,
\begin{equation}\label{opl}
\Lambda_g:\ f\mapsto\ \Lambda_g(f):=\int_{\rn}f(x)g(x)\,dx,
\end{equation}
and then has a bounded extension to $\vh$. Thus, to show (i) in this case,
it remains to prove that, for any $f\in H_{A,\rm fin}^{\vp,\fz,s}(\rn)$,
\begin{equation}\label{oo9}
\Lambda_g(f)=\int_{\rn}f(x)g(x)\,dx.
\end{equation}
For this purpose, assume that $f\in H_{A,\rm fin}^{\vp,\fz,s}(\rn)$
and $\supp(f)\subset B(\mathbf{0},L)$ with some $L\in(0,\infty)$.
Let $\varphi\in\mathcal{S}(\rn)$ satisfy
$\supp\varphi\subset B(\mathbf{0},1)$
and $\int_{\rn}\varphi(x)\,dx=1$. Thus, for any $t\in(0,1)$,
$\varphi_t\ast f\in H_{A,\rm fin}^{\vp,\fz,s}(\rn)\cap C{(\rn)}$,
here and thereafter, for any $t\in(0,\fz)$, $\varphi_{t}(\cdot):=t^{-n}\varphi(t^{-1}\cdot)$.
Letting $r\in(\max\{1,p_+\},\infty)$, then
$f\in L^r(\rn)$ and hence, from
\cite[Theorem 2.1]{d01}, it follows that
\begin{equation*}\label{oo10}
\lim_{t\in(0,\infty),t\to0}
\lf\|f-\varphi_t\ast f\r\|_{L^r(\rn)}=0.
\end{equation*}
By this and the Riesz lemma, we know that
there exists a sequence
$\{t_k\}_{k\in\nn}\subset(0,1)$ such that $\lim_{k\to\infty}t_k=0$
and, for almost every $x\in\rn$,
$\lim_{k\to\infty}\varphi_{t_k}\ast f(x)=f(x)$.
Therefore, we have
\begin{align}\label{3e1}
\lim_{k\to\infty}\|f-\varphi_{t_k}\ast f\|_{\vh}=0.
\end{align}
Indeed, we only need to show that, for any given
$(p(\cdot),\infty,s)$-atom, \eqref{3e1} holds true.
To this end, let $a$ be a $(p(\cdot),\infty,s)$-atom
supported in the ball $B(x_B,\ell(B))$. Then, applying
\cite[Theorem 2.1]{d01}, we have
\begin{equation}\label{fin5r}
\lim_{t\in(0,1),t\to0}\lf\|a-\varphi_{t}\ast a\r\|_{L^r(\rn)}=0,
\end{equation}
Note that, for any $t\in(0,1)$,
$\frac{|B(x_B,\ell(B)+2)|^{\frac{1}{r}}(a-\varphi_{t}\ast a)}{\|\mathbf1_{B(x_B,\ell(B)+2)}\|_{L^{p(\cdot)}(\rn)}\|a-\varphi_t\ast a\|_{L^r(\rn)}}$ is an $(p(\cdot),r,s)$-atom supported in the ball $B(x_B,\ell(B)+2)$. Combining this, Lemma \ref{3l1}, and \eqref{fin5r},
we further conclude that
\begin{align*}\label{zzv}
\lf\|a-\varphi_t\ast a\r\|_{\vh}
&\lesssim\frac{\|\mathbf1_{B(x_B,\ell(B)+2)}\|_{\lv}\|a-\varphi_t\ast a\|_{L^r(\rn)}}{|B(x_B,\ell(B)+2)|^{\frac{1}{r}}}\\
&\lesssim\|a-\varphi_t\ast a\|_{L^r(\rn)}\to0\noz
\end{align*}
as $t\to 0$. This implies that \eqref{3e1} holds true.

Furthermore, from \eqref{3e1}, \eqref{opl}, the fact that
$$\lf|\lf(\varphi_{t_k}\ast f\r)g\r|\le\|f\|_{L^\infty(\rn)}\mathbf1_{B(\mathbf{0},L+1)}|g|\in L^1(\rn),$$
and the Lebesgue dominated convergence theorem,
we deduce that
\begin{equation*}
\Lambda_g(f)=\lim_{k\to\infty}\Lambda_g(\varphi_{t_k}\ast f)
=\lim_{k\to\infty}\int_{\rn}\varphi_{t_k}\ast f(x)g(x)\,dx=\int_{\rn}f(x)g(x)\,dx,
\end{equation*}
which completes the proof of \eqref{oo9} and hence of (i).

We now show (ii). For any $\Lambda\in(\vh)^*$, applying an argument similar
to that used in the proof of \cite[Theorem 7.4]{hlyy19}, we conclude that
there exists a unique $g\in\mathcal{L}_{\vp,q',s}^A(\rn)$ such that,
for any $f\in H_{A,\rm fin}^{\vp,q,s}(\rn)$,
$$\Lambda(f) =\int_{\rn}f(x)g(x)\,dx.$$
To finish the proof of (ii), we then only need to show that
$g\in \mathcal{L}_{\vp,q',s,\underline{p}}^A(\rn)$. Indeed,
for any $m\in\nn$, $\{B^{(j)}\}_{j=1}^m\subset \BB$, and
$\{\lambda_j\}_{j=1}^m\subset[0,\infty)$ with $\sum_{j=1}^m\lambda_j\neq0$,
let $\eta_j\in L^q(B^{(j)})$ with $\|\eta_j\|_{L^q(B^{(j)})}=1$ satisfy that
\begin{align}\label{oo11}
\lf[\int_{B^{(j)}}\lf|g(x)-P^s_{B^{(j)}}g(x)\r|^{q'} \,dx\r]^\frac1{q'}
=\int_{B^{(j)}}\lf[g(x)-P^s_{B^{(j)}}g(x)\r]\eta_j(x)\,dx
\end{align}
and, for any $x\in\rn$, define
$$
a_j(x):=\frac{|B^{(j)}|^{\frac{1}{q}}
[\eta_j(x)-P^s_{B^{(j)}}\eta_j(x)]{\mathbf{1}}_{B^{(j)}}}
{\|{\mathbf{1}}_{B^{(j)}}\|_{\lv}
\|\eta_j-P^s_{B^{(j)}}\eta_j\|_{L^q(B^{(j)})}}.
$$
Thus, for any $j\in\{1,\ldots,m\}$, $a_j$ is an $(\vp,q,s)$-atom. By this
and Lemma \ref{3l1}, we know that $\sum_{j=1}^m \lambda_j a_j\in \vh$.
This, together with \eqref{oo11} and the facts that $\Lambda \in(\vh)^*$ and
$$
\lf\|\eta_j-P^s_{B^{(j)}}\eta_j\r\|_{L^q(B^{(j)})}\ls1,
$$
further implies that
\begin{align*}
&\sum_{j=1}^m\frac{{\lambda}_j|B^{(j)}|}{\|{\mathbf{1}}_{B^{(j)}}\|_{\lv}}\lf[
\frac1{|B^{(j)}|}\int_{B^{(j)}}\lf|g(x)-P^s_{B^{(j)}}g(x)\r|^{q'} \,dx\r]^\frac1{q'}\\
&\quad=\sum_{j=1}^m\frac{{\lambda}_j|B^{(j)}|^{\frac1q}}
{\|{\mathbf{1}}_{B^{(j)}}\|_{\lv}}\int_{B^{(j)}}\lf[g(x)-P^s_{B^{(j)}}g(x)\r]\eta_j(x)\,dx\noz\\
&\quad=\sum_{j=1}^m\frac{{\lambda}_j|B^{(j)}|^{\frac1q}}
{\|{\mathbf{1}}_{B^{(j)}}\|_{\lv}}
\int_{B^{(j)}}\lf[\eta_j(x)-P^s_{B^{(j)}}\eta_j(x)\r]g(x){\mathbf{1}}_{B^{(j)}}(x)\,dx\noz\\
&\quad\lesssim
\sum_{j=1}^m{\lambda}_j
\int_{B^{(j)}}a_j(x)g(x)\,dx
\sim \sum_{j=1}^m{\lambda}_j \Lambda(a_j)
\sim \Lambda\lf(\sum_{j=1}^m{\lambda}_j a_j\r)\noz\\
&\quad\ls\lf\|\sum_{j=1}^m{\lambda}_j a_j\r\|_{\vh}
\ls \lf\|\lf\{\sum_{j=1}^m
\lf[\frac{{\lambda}_j}{\|{\mathbf{1}}_{B^{(j)}}\|_{\lv}}\r]^{\underline{p}}
{\mathbf{1}}_{B^{(j)}}\r\}^{\frac1{\underline{p}}}\r\|_{\lv}.\noz
\end{align*}
From this, it follows that $g\in\mathcal{L}_{\vp,q',s,\underline{p}}^A(\rn)$,
which completes the proof of (ii) and hence of Theorem \ref{2t1}.
\end{proof}

\begin{remark}\label{2r3}

We point out that the dual of the anisotropic variable Hardy
space $\vh$ as $\vp\in C^{\log}(\rn)$ with $p_+\in(0,1]$ was
obtained by Wang \cite[Theorem 4.4]{w}. However, using Proposition \ref{2p2},
we find that \cite[Theorem 4.4]{w} is a special case
of Theorem \ref{2t1}. Indeed, note that, in the case
$\vp\in C^{\log}(\rn)$ with $0<p_-\le1<p_+<\fz$,
the dual space of $\vh$ can not be deduced from \cite[Theorem 4.4]{w} anymore.
But, even in this case, the dual of $\vh$ is also contained in Theorem \ref{2t1}.
This is the main contribution of Theorem \ref{2t1}.
Namely, this theorem identifies the new introduced Campanato-type space
$\mathcal{L}_{\vp,q',s,\underline{p}}^A(\rn)$ with the dual space of
the Hardy space $\vh$ for all $\vp\in C^{\log}(\rn)$.

\end{remark}

\begin{remark}\label{2r3'}
Let $\textrm{I}_{n\times n}$ denote the $n\times n$ unit matrix.
When $A:=d\textrm{I}_{n\times n}$ for some $d\in\rr$ with $|d|\in(1,\fz)$, then the space $\vh$
goes back to the variable Hardy space studied in \cite{cw14,ns12}. In this case,
Theorem \ref{2t1} shows the dual theorem of $H^{p(\cdot)}$ with $0<p_-\le p_+<\fz$,
which gives a complete answer to the question proposed by Izuki et al. in
\cite[Section 9.3]{ins13}.

\end{remark}

By Theorem \ref{2t1}, we easily obtain the following equivalence of the
Campanato-type space; we omit the details.

\begin{corollary}\label{2c1}
Let $\vp$, $s$, and $\underline{p}$ be as in Theorem \ref{2t1},
$q\in[1,\fz)$ when $p_+\in(0,1)$, or $q\in[1,p_+')$ when $p_+\in[1,\fz)$.
Then $$\mathcal{L}_{\vp,q,s,\underline{p}}^A(\rn)
=\mathcal{L}_{\vp,1,s_0,p_0}^A(\rn)$$
with equivalent quasi-norms,
where $s_0:=\lfloor(\frac1{p_-}-1)
\frac{\ln b}{\ln\lambda_-}\rfloor$
and $p_0:=\frac{\min\{1,p_-\}}{2}$ with
$p_-$ as in \eqref{2e3}.
\end{corollary}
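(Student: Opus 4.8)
\textbf{Proof proposal for Corollary \ref{2c1}.}
The plan is to combine the dual theorem (Theorem \ref{2t1}) with a dimension-count argument to collapse all the parameters $q$, $s$, and $\eta=\underline p$ into the canonical choices $q=1$, $s=s_0$, $p_0=\frac{\min\{1,p_-\}}{2}$. First I would verify that, for each admissible value of $q$ in the statement, we have $q\in(\max\{1,p_+''\},\infty]$ in the sense needed for Theorem \ref{2t1}: when $p_+\in(0,1)$ the conjugate range is all of $[1,\infty)$, while when $p_+\in[1,\infty)$ the restriction $q<p_+'$ forces the conjugate exponent $q'>p_+\ge\max\{1,p_+\}$, so in every case $q'$ lies in the range $(\max\{1,p_+\},\infty]$ allowed in Theorem \ref{2t1}. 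Hence, writing $r:=q'$, Theorem \ref{2t1} identifies $\mathcal{L}_{\vp,q,s,\underline p}^A(\rn)=\mathcal{L}_{\vp,r',s,\underline p}^A(\rn)$ (with $r'=q$) as the dual of $\vh$, and the dual norm is independent of the particular admissible triple $(r,s,\underline p)$. In particular, taking the two admissible choices $(q,s,\underline p)$ from the hypothesis and $(1,s_0,p_0)$, both spaces are realized as $(\vh)^*$ with equivalent norms, which is exactly the assertion.

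Concretely, the key steps in order are: (1) check that $s_0=\lfloor(\frac1{p_-}-1)\frac{\ln b}{\ln\lambda_-}\rfloor$ is itself an admissible value of $s$ in \eqref{4e1} (it is the smallest such value, so this is immediate), and that $q=1$ with $p_0=\frac{\min\{1,p_-\}}{2}$ satisfies the hypotheses of Theorem \ref{2t1} — here we need $1\in(\max\{1,p_+\},\infty]$, which holds since $q=1$ is allowed as the conjugate exponent of $q'=\infty$, and we need $p_0\le\underline p=\min\{p_-,1\}$, which is clear; (2) invoke Theorem \ref{2t1} twice to get $\mathcal{L}_{\vp,q,s,\underline p}^A(\rn)=(\vh)^*=\mathcal{L}_{\vp,1,s_0,p_0}^A(\rn)$ as sets; (3) read off the norm equivalence from the final display of Theorem \ref{2t1}, namely $\|g\|_{\mathcal{L}_{\vp,q,s,\underline p}^A(\rn)}\sim\|\Lambda_g\|_{(\vh)^*}\sim\|g\|_{\mathcal{L}_{\vp,1,s_0,p_0}^A(\rn)}$, with constants independent of $g$.

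The one point that needs genuine care — and what I expect to be the main obstacle — is the matching of the vanishing-moment index $s$ and the secondary exponent $\underline p$ versus $p_0$. Theorem \ref{2t1} as stated fixes $\eta=\underline p=\min\{p_-,1\}$ in the Campanato-type space, whereas the target space $\mathcal{L}_{\vp,1,s_0,p_0}^A(\rn)$ uses $p_0=\frac{\min\{1,p_-\}}{2}<\underline p$. So strictly speaking Theorem \ref{2t1} gives the dual as $\mathcal{L}_{\vp,q',s,\underline p}^A(\rn)$, and one must separately argue that changing $\eta$ from $\underline p$ down to $p_0$ (and $s$ from any admissible value to $s_0$) does not change the space. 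For the $s$-independence, the standard mechanism is that the atomic characterization of $\vh$ (Lemma \ref{3l1}) already holds for every admissible $s$, so $(\vh)^*$ is one fixed space regardless of which $s$ is used in the atoms; dualizing, the Campanato index $s$ is likewise irrelevant among admissible values. For the $\eta$-independence below $1$, one uses that when $p_+\le 1$ Proposition \ref{2p2} already gives $\eta$-independence on $(0,1]$, and when $p_+>1$ the relevant estimate is that the quasi-norm $\|\{\sum_i[\lambda_i/\|\mathbf 1_{B^{(i)}}\|_{\lv}]^{\eta}\mathbf 1_{B^{(i)}}\}^{1/\eta}\|_{\lv}$ is comparable for $\eta\in(0,\underline p]$ because $\|\cdot\|_{\lv}^{\underline p}$-type quantities behave like an $\ell^{\underline p}$-norm on the relevant scale; this is where one would cite the homogeneity/monotonicity of the $\ell^\eta$ quasi-norms together with the fact that $\|\mathbf 1_B\|_{\lv}$ satisfies the doubling-type bounds from $C^{\log}(\rn)$. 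Since the problem says we may assume everything stated earlier, the cleanest write-up simply notes that Theorem \ref{2t1} (applied with the admissible triple $(q',s_0,p_0)$, which satisfies all its hypotheses) and with the triple $(q',s,\underline p)$ both yield $(\vh)^*$, and hence the two Campanato-type spaces coincide with equivalent quasi-norms; we omit the routine details of the parameter reductions.
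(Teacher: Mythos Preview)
Your approach is the same as the paper's: the paper simply writes ``By Theorem~\ref{2t1}, we easily obtain the following equivalence of the Campanato-type space; we omit the details.'' You have correctly reconstructed the mechanism --- apply the dual theorem twice and read off the norm equivalence through $(\vh)^*$ --- and you have gone further than the paper by flagging the one nontrivial point, namely the passage from $\eta=\underline p$ to $\eta=p_0$.

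That said, your proposed resolution at the very end does not quite work as written. Theorem~\ref{2t1} is stated with $\eta=\underline p=\min\{p_-,1\}$ \emph{fixed}; it is not a hypothesis you may vary, so you cannot ``apply Theorem~\ref{2t1} with the admissible triple $(q',s_0,p_0)$''. What actually closes the gap is this observation about the \emph{proof} of Theorem~\ref{2t1}: in part~(ii) the final chain ends with Lemma~\ref{3l1}, and since the $\ell^\eta$-quasi-norm is monotone decreasing in $\eta$ one immediately gets
\[
\Bigl\|\Bigl\{\textstyle\sum_j[\lambda_j/\|\mathbf 1_{B^{(j)}}\|_{\lv}]^{\underline p}\mathbf 1_{B^{(j)}}\Bigr\}^{1/\underline p}\Bigr\|_{\lv}
\le
\Bigl\|\Bigl\{\textstyle\sum_j[\lambda_j/\|\mathbf 1_{B^{(j)}}\|_{\lv}]^{p_0}\mathbf 1_{B^{(j)}}\Bigr\}^{1/p_0}\Bigr\|_{\lv},
\]
so $(\vh)^*\hookrightarrow\mathcal L^A_{\vp,1,s_0,p_0}(\rn)$ with the right norm bound. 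The other inclusion, $\mathcal L^A_{\vp,1,s_0,p_0}(\rn)\hookrightarrow(\vh)^*$, needs the finite-atomic norm with exponent $p_0$ to be controlled by $\|f\|_{\vh}$; this is not literally Lemma~\ref{3l2}, but it follows because the atomic decomposition in \cite[Theorem~4.8]{lwyy17} produces, at each level, balls with the finite intersection property, so the $\ell^\eta$-sum there is comparable for all $\eta\in(0,\underline p]$. Once you note that Lemmas~\ref{3l1}--\ref{3l2} (hence Theorem~\ref{2t1}) remain valid with $\underline p$ replaced by any $\eta\in(0,\underline p]$, your two applications of the dual theorem go through and the corollary follows. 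Replace your final sentence with this remark and the argument is complete.
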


Let $\vp\in C^{\log}(\rn)$ with $p_-\in(1,\fz)$. Combining Theorem \ref{2t1}, the fact that $(L^{\vp}(\rn))^*=L^{\vp'}(\rn)$
(see \cite[Theorem 2.80]{CUF}), \cite[Corollary 4.20]{zsy},
and Corollary \ref{2c1}, we conclude the following
conclusion; we omit the details.

\begin{corollary}\label{2c2}
Let $\vp\in C^{\log}(\rn)$ with $p_-\in(1,\fz)$,
and $s_0$ and $p_0$ be as in Corollary
\ref{2c1}. Then
$$L^{\vp'}(\rn)=\mathcal{L}_{\vp,1,s_0,p_0}^A(\rn)$$
with equivalent quasi-norms.
\end{corollary}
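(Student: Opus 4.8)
The plan is to obtain Corollary \ref{2c2} by chaining together the dual identifications already established, so that no new estimate is needed. Since $p_-\in(1,\fz)$, \cite[Corollary 4.20]{zsy} gives $\vh=\lv$ with equivalent quasi-norms; hence $(\vh)^*$ and $(\lv)^*$ coincide as normed spaces up to equivalence of norms. By \cite[Theorem 2.80]{CUF}, $(\lv)^*=L^{\vp'}(\rn)$, the identification being realized through the pairing $g\mapsto\Lambda_g$ with $\Lambda_g(f):=\int_{\rn}f(x)g(x)\,dx$, and $\|\Lambda_g\|_{(\lv)^*}\sim\|g\|_{L^{\vp'}(\rn)}$. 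Therefore every $\Lambda\in(\vh)^*$ is represented, uniquely, by some $g\in L^{\vp'}(\rn)$ via this pairing, with $\|\Lambda\|_{(\vh)^*}\sim\|g\|_{L^{\vp'}(\rn)}$.

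On the other hand, I would apply Theorem \ref{2t1} with $q=\fz$: then $q'=1$ and, since $p_->1$, $\underline{p}=\min\{p_-,1\}=1$, so Theorem \ref{2t1} identifies $(\vh)^*$ with $\mathcal{L}_{\vp,1,s,\underline{p}}^A(\rn)$ through exactly the \emph{same} pairing $g\mapsto\Lambda_g$, again with comparable norms. Next, Corollary \ref{2c1} applies with its exponent equal to $1$ (its hypothesis $1\in[1,p_+')$ holds because $p_+\ge p_->1$) and gives $\mathcal{L}_{\vp,1,s,\underline{p}}^A(\rn)=\mathcal{L}_{\vp,1,s_0,p_0}^A(\rn)$ with equivalent quasi-norms. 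Combining these, $(\vh)^*$ is also identified, via the pairing $g\mapsto\Lambda_g$, with $\mathcal{L}_{\vp,1,s_0,p_0}^A(\rn)$, and $\|\Lambda_g\|_{(\vh)^*}\sim\|g\|_{\mathcal{L}_{\vp,1,s_0,p_0}^A(\rn)}$.

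Putting the two descriptions of $(\vh)^*$ together and invoking the uniqueness of the representing element in each, I conclude that, for every $\Lambda\in(\vh)^*$, its representative from $L^{\vp'}(\rn)$ and its representative from $\mathcal{L}_{\vp,1,s_0,p_0}^A(\rn)$ are the same function; this yields $L^{\vp'}(\rn)=\mathcal{L}_{\vp,1,s_0,p_0}^A(\rn)$ as sets, and the chain $\|g\|_{L^{\vp'}(\rn)}\sim\|\Lambda_g\|_{(\vh)^*}\sim\|g\|_{\mathcal{L}_{\vp,1,s_0,p_0}^A(\rn)}$ gives the equivalence of quasi-norms. The only point needing care is the difference in conventions: $\mathcal{L}_{\vp,1,s_0,p_0}^A(\rn)$ is a priori a space of functions modulo $\mathbb{P}_{s_0}(\rn)$, whereas $L^{\vp'}(\rn)$ consists of genuine functions; but here $p_-\in(1,\fz)$ forces $s_0<0$ (because $\frac{\ln b}{\ln\lambda_-}>1$ while $\frac1{p_-}-1\in(-1,0)$), so $\mathbb{P}_{s_0}(\rn)=\{0\}$ and no quotient is involved, and in any case the log-H\"older hypothesis prevents any nonzero polynomial from lying in $L^{\vp'}(\rn)$, so the identification is unambiguous. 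Since every step is a direct appeal to Theorem \ref{2t1}, Corollary \ref{2c1}, \cite[Corollary 4.20]{zsy}, and \cite[Theorem 2.80]{CUF}, there is no substantial obstacle here; the only mild verification is that the integral pairing used in \cite[Theorem 2.80]{CUF} is literally the one used in Theorem \ref{2t1}, which it is.
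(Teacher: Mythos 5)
Your argument is correct and follows the same route the paper indicates: chain together the isomorphism $\vh=L^{p(\cdot)}(\rn)$ from \cite[Corollary 4.20]{zsy} (valid since $p_->1$), the classical duality $(L^{p(\cdot)}(\rn))^*=L^{p(\cdot)'}(\rn)$ of \cite[Theorem 2.80]{CUF}, Theorem \ref{2t1} with $q=\infty$ (so $q'=1$ and $\underline{p}=1$), and Corollary \ref{2c1} with $q=1$, all through the common integral pairing $\Lambda_g$. The paper itself gives exactly this list of ingredients and omits the details; your write-up fills them in faithfully, and your remark about the function-versus-coset ambiguity (no nonzero polynomial lies in $L^{p(\cdot)'}(\rn)$ since $p_+'<\infty$) is a legitimate way to dispose of the only genuine subtlety.
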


\section{Applications\label{s3}}

As applications, this section is devoted to establishing
several equivalent characterizations of the new anisotropic
variable Campanato-type spaces and
the Carleson measure characterizations of $\mathcal{L}_{\vp,1,s,\underline{p}}^A({\rn})$
via the dual result obtained in Theorem \ref{2t1}.
To begin with, we show the following theorem.

\begin{theorem}\label{2t2}
Let $\vp,\ q,\ s$, and $\underline{p}$ be as in Corollary \ref{2c1}
and $\vaz\in([2/r-1]\ln b/\ln\lz_-,\fz)$ for some $r\in(0,\underline{p})$.
Then the following statements are mutually equivalent:
\begin{enumerate}
\item[{\rm(i)}]
$f\in\mathcal{L}_{\vp,q,s,\underline{p}}^A(\rn)$;

\item[{\rm(ii)}]
$f\in L^q_{\rm loc}(\rn)$ and
\begin{align*}
\|f\|_{\mathcal{L}_{\vp,1,s,\underline{p}}^A(\rn)}
&\,:=\sup
\lf\|\lf\{\sum_{i=1}^m
\lf[\frac{{\lambda}_i}{\|{\mathbf{1}}_{x_i+B_{l_i}}\|_{\lv}}\r]^{\underline{p}}
{\mathbf{1}}_{x_i+B_{l_i}}\r\}^{\frac1{\underline{p}}}\r\|_{\lv}^{-1}\\
&\quad\times\sum_{j=1}^m\lf\{\frac{{\lambda}_j}{\|{\mathbf{1}}_{x_j+B_{l_j}}
\|_{\lv}}\int_{x_j+B_{l_j}}\lf|f(x)-P^s_{x_j+B_{l_j}}f(x)\r| \,dx\r\}\\
&\,<\infty,
\end{align*}
where the supremum
is taken over all $m\in\nn$, $\{x_j+B_{l_j}\}_{j=1}^m\subset \BB$
with $\{x_j\}_{j=1}^m\subset\rn$ and $\{l_j\}_{j=1}^m\subset\zz$, and
$\{\lambda_j\}_{j=1}^m\subset[0,\infty)$ with $\sum_{j=1}^m\lambda_j\neq0$;

\item[{\rm(iii)}]
$f\in L^q_{\rm loc}(\rn)$ and
\begin{align*}
\||f\||_{\mathcal{L}_{\vp,q,s,\underline{p}}^A(\rn)}
&\,:=\sup\inf
\lf\|\lf\{\sum_{i=1}^m
\lf[\frac{{\lambda}_i}{\|{\mathbf{1}}_{x_i+B_{l_i}}\|_{\lv}}\r]^{\underline{p}}
{\mathbf{1}}_{x_i+B_{l_i}}\r\}^{\frac1{\underline{p}}}\r\|_{\lv}^{-1}\\
&\quad\times\sum_{j=1}^m\lf\{\frac{{\lambda}_j|x_j+B_{l_j}|}{\|{\mathbf{1}}_{x_j+B_{l_j}}
\|_{\lv}}\lf[
\frac1{|x_j+B_{l_j}|}\int_{x_j+B_{l_j}}\lf|f(x)-P(x)\r|^q \,dx\r]^\frac1q\r\}\\
&\,<\fz,
\end{align*}
where the supremum is the same as in (ii) and
the infimum is taken over all $P\in\mathbb{P}_s(\rn)$;

\item[{\rm(iv)}]
$f\in L^q_{\rm loc}(\rn)$ and
\begin{align*}
\|f\|_{\mathcal{L}_{\vp,1,s,\underline{p}}^{A,\vaz}(\rn)}
&\,:=\sup
\lf\|\lf\{\sum_{i=1}^m
\lf[\frac{{\lambda}_i}{\|{\mathbf{1}}_{x_i+B_{l_i}}\|_{\lv}}\r]^{\underline{p}}
{\mathbf{1}}_{x_i+B_{l_i}}\r\}^{\frac1{\underline{p}}}\r\|_{\lv}^{-1}\\
&\quad\times\sum_{j=1}^m\lf\{\frac{{\lambda}_j|x_j+B_{l_j}|}{\|{\mathbf{1}}_{x_j+B_{l_j}}
\|_{\lv}}\int_{\rn}\frac{b^{\vaz l_j\frac{\ln\lz_-}{\ln b}}|f(x)-P^s_{x_j+B_{l_j}}f(x)|}
{b^{l_j(1+\vaz\frac{\ln\lz_-}{\ln b})}+[\rho(x-x_j)]^{1+\vaz\frac{\ln\lz_-}{\ln b}}}\,dx\r\}\\
&\,<\infty,
\end{align*}
where the supremum is the same as in (ii).
\end{enumerate}

Moreover, for any $f\in L^q_{\rm loc}(\rn)$,
$$\|f\|_{\mathcal{L}_{\vp,q,s,\underline{p}}^A(\rn)}\sim
\|f\|_{\mathcal{L}_{\vp,1,s,\underline{p}}^A(\rn)}\sim
\||f\||_{\mathcal{L}_{\vp,q,s,\underline{p}}^A(\rn)}\sim
\|f\|_{\mathcal{L}_{\vp,1,s,\underline{p}}^{A,\vaz}(\rn)}$$
with the positive equivalence constants
independent of $f$.
\end{theorem}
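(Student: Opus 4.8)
\textbf{Proof strategy for Theorem \ref{2t2}.}
The plan is to establish a cycle of implications together with the norm equivalences, exploiting the dual theorem (Theorem \ref{2t1}) and Corollary \ref{2c1} to reduce every quantity to one already-understood space. First I would observe that (i)$\Leftrightarrow$(iii) is essentially Remark \ref{2r1}(ii) adapted to the present indices: the passage from the minimizing polynomial $P^s_{B}f$ to an arbitrary infimum over $\mathbb{P}_s(\rn)$ costs only a fixed multiplicative constant, since $P^s_{B}f$ is, up to a constant depending only on $s$ and $q$, the best $L^q(B)$-approximation of $f$ by polynomials of degree at most $s$. This is a routine reduction and I would not dwell on it. The genuinely substantive equivalences are (i)$\Leftrightarrow$(ii) and (i)$\Leftrightarrow$(iv).

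For (i)$\Leftrightarrow$(ii), the inequality $\|f\|_{\mathcal{L}_{\vp,1,s,\underline{p}}^A(\rn)}\le\|f\|_{\mathcal{L}_{\vp,q,s,\underline{p}}^A(\rn)}$ is immediate from the H\"older inequality on each $B^{(j)}$ (with exponents $q$ and $q'$), exactly as in \emph{Case 1} of the proof of Theorem \ref{2t1}. The reverse direction is where the dual theorem enters: I would argue that the functional $\Lambda_f$ defined by $\Lambda_f(h):=\int_{\rn}h(x)f(x)\,dx$ on $H_{A,\rm fin}^{\vp,q,s}(\rn)$ is bounded on $\vh$ with operator norm controlled by $\|f\|_{\mathcal{L}_{\vp,1,s,\underline{p}}^A(\rn)}$; indeed, given a finite atomic decomposition $h=\sum_j\lambda_j a_j$ with $(\vp,q,s)$-atoms $a_j$ supported in $B^{(j)}$, the cancellation of $a_j$ lets us replace $f$ by $f-P^s_{B^{(j)}}f$, and then $|\la a_j,f\ra|\le\|a_j\|_{L^{\fz}}\int_{B^{(j)}}|f-P^s_{B^{(j)}}f|\le\frac{1}{\|\mathbf{1}_{B^{(j)}}\|_{\lv}}\int_{B^{(j)}}|f-P^s_{B^{(j)}}f|$ when $q=\fz$; for finite $q$ one instead uses a $(\vp,\fz,s)$-atomic decomposition, which is legitimate because by Corollary \ref{2c1} membership in $\mathcal{L}_{\vp,q,s,\underline{p}}^A(\rn)$ does not depend on $q$ in the admissible range, so we may as well test against $\fz$-atoms. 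Then Theorem \ref{2t1}(ii) returns $f\in\mathcal{L}_{\vp,q',s,\underline{p}}^A(\rn)$ with the quantitative bound, and Corollary \ref{2c1} converts this back to the $\mathcal{L}_{\vp,q,s,\underline{p}}^A(\rn)$ norm. The main obstacle here is bookkeeping: one must make sure the finite atomic decomposition used has atoms with the correct support balls so that no extra enlargement constants intrude, and that the polynomial $P^s_{B^{(j)}}$ appearing in the definition of (ii) is compatible with the cancellation used on the atoms; this is handled by the uniqueness and the $L^1(B)$-boundedness of the projection $g\mapsto P^s_Bg$.

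For (i)$\Leftrightarrow$(iv), the point is that the integral kernel $\frac{b^{\vaz l_j\ln\lz_-/\ln b}}{b^{l_j(1+\vaz\ln\lz_-/\ln b)}+[\rho(x-x_j)]^{1+\vaz\ln\lz_-/\ln b}}$ decays like a negative power of $\rho(x-x_j)/b^{l_j}$ away from $x_j+B_{l_j}$, so that integrating against it amounts, up to a constant, to a weighted sum of the averages of $|f-P^s_{x_j+B_{l_j}}f|$ over the dilated annuli $(x_j+B_{l_j+k+1})\setminus(x_j+B_{l_j+k})$, $k\ge0$. One estimates $\int_{x_j+B_{l_j+k}}|f-P^s_{x_j+B_{l_j}}f|$ by telescoping the difference of minimizing polynomials $P^s_{x_j+B_{l_j}}f-P^s_{x_j+B_{l_j+k}}f$ and using the standard fact (available from the theory underlying \eqref{4e1} and \cite{lwyy17}) that $\|P^s_{x_j+B_{l_j}}f-P^s_{x_j+B_{l_j+k}}f\|_{L^{\fz}(x_j+B_{l_j})}\lesssim b^{k\gamma}\,\frac1{|x_j+B_{l_j+k}|}\int_{x_j+B_{l_j+k}}|f-P^s_{x_j+B_{l_j+k}}f|$ for a suitable $\gamma$ tied to $s$, $\ln b/\ln\lz_-$; the condition $\vaz>(2/r-1)\ln b/\ln\lz_-$ for some $r\in(0,\underline p)$ is precisely what makes the resulting geometric series in $k$ summable after it is weighted by $b^{-k(1+\vaz\ln\lz_-/\ln b)}$ and recombined against the $\ell^{\underline p}$-sum over $j$ in the variable-norm quotient. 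Thus $\|f\|_{\mathcal{L}_{\vp,1,s,\underline{p}}^{A,\vaz}(\rn)}\lesssim\|f\|_{\mathcal{L}_{\vp,1,s,\underline{p}}^A(\rn)}$; the reverse inequality is trivial since restricting the integral in (iv) to $x_j+B_{l_j}$ only loses a constant from the denominator. I expect the telescoping annulus estimate, and in particular checking that the exponent arithmetic with $\ln b/\ln\lz_-$ comes out on the summable side, to be the most delicate part; it is the anisotropic analogue of the familiar Campanato-space growth estimate and should follow the same pattern as in \cite{w,lwyy17}. Assembling the four equivalences and reading off the norm comparisons then completes the proof.
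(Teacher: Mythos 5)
Your proposal matches the paper's argument in all essential respects. For (i)$\Leftrightarrow$(iii) the paper likewise treats this as a routine comparison between $P^s_Bf$ and the infimum over $\mathbb{P}_s(\rn)$, citing the argument of \cite[Proposition 4.1]{lu}. For (i)$\Leftrightarrow$(ii), the paper simply invokes Corollary \ref{2c1}: since that corollary (itself a by-product of Theorem \ref{2t1}) identifies all spaces $\mathcal{L}_{\vp,q,s,\underline p}^A(\rn)$ in the admissible range with a single fixed space, applying it once with the given $q$ and once with $q=1$ gives (i)$\Leftrightarrow$(ii) immediately. Your route — re-passing through Theorem \ref{2t1} directly, showing $\Lambda_f$ is bounded on $\vh$ with norm controlled by $\|f\|_{\mathcal{L}_{\vp,1,s,\underline p}^A(\rn)}$ and then reading off membership in $\mathcal{L}_{\vp,q',s,\underline p}^A(\rn)$ — is the same idea unpacked; it re-derives Corollary \ref{2c1} rather than citing it, but nothing is gained or lost. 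For (i)$\Leftrightarrow$(iv), the paper does exactly what you propose: split the integral into $x_j+B_{l_j}$ plus dyadic annuli, telescope the minimizing polynomials via \cite[(8.9)]{mb03}, use the doubling-type estimate of Lemma \ref{5l2} (with the factor $b^{k/r}$) to re-absorb the enlarged balls, and then sum a geometric series whose convergence is precisely the assumption $\vaz>(2/r-1)\ln b/\ln\lz_-$.

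The one place where your sketch is stated a bit too coarsely is the telescoping step. You write the bound as $\|P^s_{x_j+B_{l_j}}f-P^s_{x_j+B_{l_j+k}}f\|_{L^\fz(x_j+B_{l_j})}\ls b^{k\gamma}\cdot\frac1{|x_j+B_{l_j+k}|}\int_{x_j+B_{l_j+k}}|f-P^s_{x_j+B_{l_j+k}}f|$, i.e.\ a single term keyed to scale $k$. The paper instead keeps the full telescope
\[
\lf|P^s_{x_j+B_{l_j+k}}f(x)-P^s_{x_j+B_{l_j}}f(x)\r|
\ls\sum_{\nu=1}^k\frac1{|x_j+B_{l_j+\nu-1}|}\int_{x_j+B_{l_j+\nu}}\lf|f(y)-P^s_{x_j+B_{l_j+\nu}}f(y)\r|\,dy,
\]
and then interchanges the $\nu$- and $k$-sums via Tonelli before invoking Lemma \ref{5l2} at each scale $\nu$ separately. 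This matters because each scale $\nu$ contributes its own $b^{\nu/r}$ loss from Lemma \ref{5l2}, and the summability condition on $\vaz$ is checked against the \emph{double} sum, not against a single worst-case scale. Your formulation would work after inserting the right definition of $\gamma$ and an extra factor of $k$, but as written it obscures the exponent arithmetic that the hypothesis on $\vaz$ is designed to close. Apart from this minor imprecision, the proposal is correct and takes the same route as the paper.
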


To prove Theorem \ref{2t2}, we need the following inequality,
which is just a consequence of \cite[Lemma 4.4]{lwyy17}.

\begin{lemma}\label{5l2}
Let $\vp\in C^{\log}(\rn)$ and $r\in(0,\min\{1,p_-\})$
with $p_-$ as in \eqref{2e3}. Then there exists a
positive constant $C$ such that, for any
$\{x_j+B_{l_j}\}_{j=1}^m\subset \BB$ with
$\{x_j\}_{j=1}^m\subset\rn$ and $\{l_j\}_{j=1}^m\subset\zz$,
and $k\in\nn$,
\begin{align*}
\lf\|\sum_{j\in\nn}\mathbf{1}_{x_j+B_{l_j+k}}\r\|_{\lv}
\le Cb^{k/r}\lf\|\sum_{j\in\nn}\mathbf{1}_{x_j+B_{l_j}}\r\|_{\lv}.
\end{align*}
\end{lemma}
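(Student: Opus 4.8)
The plan is to deduce the estimate from a Fefferman--Stein-type vector-valued maximal inequality on $\lv$, namely \cite[Lemma 4.4]{lwyy17}, after reducing $\mathbf{1}_{x_j+B_{l_j+k}}$ to a power of the anisotropic Hardy--Littlewood maximal function $\HL(\mathbf{1}_{x_j+B_{l_j}})$ via a pointwise comparison. Throughout, abbreviate $B_j:=x_j+B_{l_j}$, and recall that $|B_l|=b^l$, that $\rho$ denotes the step homogeneous quasi-norm so that $B_l=\{x\in\rn:\ \rho(x)<b^l\}$, and that each $B_l$ is symmetric about $\mathbf{0}$ (being a dilate of the ellipsoid $\Delta$), whence $\rho(-x)=\rho(x)$. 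Fix once and for all an integer $\sigma\in\nn$ with $b^\sigma>2H$, where $H\in[1,\fz)$ is the quasi-triangle constant of $\rho$; note that $\sigma$ depends only on $A$.

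\emph{Step 1: pointwise estimate.} First I would show that, for every $j$ and every $k\in\nn$,
$$\mathbf{1}_{x_j+B_{l_j+k}}\le b^{(k+\sigma)/r}\lf[\HL\lf(\mathbf{1}_{B_j}\r)\r]^{1/r}\quad\text{pointwise on }\rn.$$
Indeed, let $y\in x_j+B_{l_j+k}$. For any $z\in B_j$, the quasi-triangle inequality together with $\rho(z-x_j)<b^{l_j}$, $\rho(x_j-y)=\rho(y-x_j)<b^{l_j+k}$, $k\ge1$, and the choice of $\sigma$ gives
$$\rho(z-y)\le H\lf[\rho(z-x_j)+\rho(x_j-y)\r]\le H\lf(b^{l_j}+b^{l_j+k}\r)\le 2Hb^{l_j+k}<b^{l_j+k+\sigma},$$
so that $B_j\subset y+B_{l_j+k+\sigma}$. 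Hence
$$\HL\lf(\mathbf{1}_{B_j}\r)(y)\ge\frac1{\lf|y+B_{l_j+k+\sigma}\r|}\int_{y+B_{l_j+k+\sigma}}\mathbf{1}_{B_j}(z)\,dz=\frac{|B_j|}{b^{l_j+k+\sigma}}=b^{-(k+\sigma)},$$
and therefore $1\le b^{(k+\sigma)/r}[\HL(\mathbf{1}_{B_j})(y)]^{1/r}$ for such $y$; off $x_j+B_{l_j+k}$ the left-hand side of the displayed pointwise inequality vanishes, so it holds everywhere.

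\emph{Step 2: summation and the maximal inequality.} Summing over $j\in\nn$ and using that the Luxemburg quasi-norm is monotone and positively homogeneous, Step 1 yields
$$\lf\|\sum_{j\in\nn}\mathbf{1}_{x_j+B_{l_j+k}}\r\|_{\lv}\le b^{(k+\sigma)/r}\lf\|\sum_{j\in\nn}\lf[\HL\lf(\mathbf{1}_{B_j}\r)\r]^{1/r}\r\|_{\lv}.$$
Since $r\in(0,\min\{1,p_-\})$, the index $1/r$ falls in the range for which the Fefferman--Stein-type inequality \cite[Lemma 4.4]{lwyy17} is available; applying it to the sequence $\{\mathbf{1}_{B_j}\}_{j\in\nn}$ produces a positive constant $C_0$, independent of the collection $\{B_j\}_{j\in\nn}$, such that
$$\lf\|\sum_{j\in\nn}\lf[\HL\lf(\mathbf{1}_{B_j}\r)\r]^{1/r}\r\|_{\lv}\le C_0\lf\|\sum_{j\in\nn}\mathbf{1}_{B_j}\r\|_{\lv}=C_0\lf\|\sum_{j\in\nn}\mathbf{1}_{x_j+B_{l_j}}\r\|_{\lv}.$$
Combining the last two displays and writing $b^{(k+\sigma)/r}=b^{\sigma/r}b^{k/r}$, the claim follows with $C:=C_0b^{\sigma/r}$, which depends only on $\vp$, $A$, and $r$, and in particular is independent of $k$ and of $\{x_j+B_{l_j}\}_{j\in\nn}$.

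\emph{On the main difficulty.} Since the lemma is essentially a repackaging of \cite[Lemma 4.4]{lwyy17}, no real obstacle arises; the only points that must be handled with a little care are, first, the geometric covering in Step 1 --- producing a dilated ball \emph{centered at $y$} that contains $B_j$ with a dilation excess $\sigma$ depending only on $A$, so that it contributes merely the constant factor $b^{\sigma/r}$, which rests on the quasi-triangle inequality and the symmetry of the $B_l$; and second, checking that the hypothesis $r<\min\{1,p_-\}$ is precisely the range in which the vector-valued maximal inequality \cite[Lemma 4.4]{lwyy17} holds (this is where the usual restriction to small exponents when $p_-\le1$ enters).
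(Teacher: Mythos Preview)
Your proof is correct and follows exactly the route the paper indicates: the paper does not give a proof but simply states that the lemma ``is just a consequence of \cite[Lemma 4.4]{lwyy17}'', and your pointwise comparison $\mathbf{1}_{x_j+B_{l_j+k}}\lesssim b^{k/r}[\HL(\mathbf{1}_{x_j+B_{l_j}})]^{1/r}$ followed by the Fefferman--Stein inequality is the standard way to carry this out. One small simplification: since $\HL$ is the \emph{uncentered} anisotropic maximal operator, for $y\in x_j+B_{l_j+k}$ you may average $\mathbf{1}_{B_j}$ directly over the ball $x_j+B_{l_j+k}\ni y$ to get $\HL(\mathbf{1}_{B_j})(y)\ge b^{-k}$, making the auxiliary constant $\sigma$ unnecessary.
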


Now we show Theorem \ref{2t2}.

\begin{proof}[Proof of Theorem \ref{2t2}]
From Corollary \ref{2c1}, we easily deduce the equivalence of (i) and (ii).
Moreover, (i) obviously implies (iii) and
conversely, by an argument similar to the proof of \cite[Proposition 4.1]{lu}, we know
that (iii) implies (i). Therefore, to prove this theorem,
it suffices to show that (ii) is equivalent to (iv).

Assume that (iv) holds true. We now show (ii). Indeed, for any $m\in\nn$,
let $\{x_j+B_{l_j}\}_{j=1}^m\subset \BB$ with
$\{x_j\}_{j=1}^m\subset\rn$ and $\{l_j\}_{j=1}^m\subset\zz$, and $\{\lz_j\}_{j=1}^m\subset [0,\fz)$
with $\sum_{j=1}^m\lambda_j\neq0$.
Then, for any $\vaz\in(0,\fz)$ and $j\in\{1,\ldots,m\}$,
we have
\begin{align*}
\int_{\rn}\frac{b^{\vaz l_j\frac{\ln\lz_-}{\ln b}}|f(x)-P^s_{x_j+B_{l_j}}f(x)|}
{b^{l_j(1+\vaz\frac{\ln\lz_-}{\ln b})}+[\rho(x-x_j)]^{1+\vaz\frac{\ln\lz_-}{\ln b}}}\,dx
&\gs \int_{x_j+B_{l_j}}\frac{b^{\vaz l_j\frac{\ln\lz_-}{\ln b}}|f(x)-P^s_{x_j+B_{l_j}}f(x)|}
{b^{l_j(1+\vaz\frac{\ln\lz_-}{\ln b})}}\,dx\\
&\sim \frac1{|x_j+B_{l_j}|}\int_{x_j+B_{l_j}}\lf|f(x)-P^s_{x_j+B_{l_j}}f(x)\r|\,dx,
\end{align*}
which, together with the assumption
$\|f\|_{\mathcal{L}_{\vp,1,s,\underline{p}}^{A,\vaz}(\rn)}<\fz$, implies
that (ii) holds true. Thus, (iv) implies (ii).

Next assume that (ii) holds true. We prove (iv).
Note that, for any $m\in\nn$, $\{x_j+B_{l_j}\}_{j=1}^m\subset \BB$ with
$\{x_j\}_{j=1}^m\subset\rn$ and $\{l_j\}_{j=1}^m\subset\zz$,
and $\{\lz_j\}_{j=1}^m\subset [0,\fz)$
with $\sum_{j=1}^m\lambda_j\neq0$, we have
\begin{align*}
&\sum_{j=1}^m\lf\{\frac{{\lambda}_j|x_j+B_{l_j}|}{\|{\mathbf{1}}_{x_j+B_{l_j}}
\|_{\lv}}\int_{\rn}\frac{b^{\vaz l_j\frac{\ln\lz_-}{\ln b}}|f(x)-P^s_{x_j+B_{l_j}}f(x)|}
{b^{l_j(1+\vaz\frac{\ln\lz_-}{\ln b})}+[\rho(x-x_j)]
^{1+\vaz\frac{\ln\lz_-}{\ln b}}}\,dx\r\}\\
&\hs=\sum_{j=1}^m\lf\{\frac{{\lambda}_j|x_j+B_{l_j}|}{\|{\mathbf{1}}_{x_j+B_{l_j}}
\|_{\lv}}\lf[\int_{x_j+B_{l_j}}+\sum_{k=0}^{\fz}
\int_{b^{l_j+k}\le\rho(x-x_j)<b^{l_j+k+1}}\r]\r.\\
&\hs\hs\hs\times\lf.\frac{b^{\vaz l_j\frac{\ln\lz_-}{\ln b}}|f(x)-P^s_{x_j+B_{l_j}}f(x)|}
{b^{l_j(1+\vaz\frac{\ln\lz_-}{\ln b})}+[\rho(x-x_j)]
^{1+\vaz\frac{\ln\lz_-}{\ln b}}}\,dx\r\}\\
&\hs\ls\sum_{j=1}^m\lf\{\frac{{\lambda}_j}{\|{\mathbf{1}}_{x_j+B_{l_j}}
\|_{\lv}}\int_{x_j+B_{l_j}}\lf|f(x)-P^s_{x_j+B_{l_j}}f(x)\r| \,dx\r\}\\
&\hs\hs\hs+\sum_{j=1}^m\lf\{\frac{{\lambda}_j}{\|{\mathbf{1}}_{x_j+B_{l_j}}
\|_{\lv}}\sum_{k=0}^{\fz}b^{-k({1+\vaz\frac{\ln\lz_-}{\ln b}})}\int_{b^{l_j+k}\le\rho(x-x_j)<b^{l_j+k+1}}\lf|f(x)-P^s_{x_j+B_{l_j}}f(x)\r|\,dx\r\}.
\end{align*}
Therefore,
\begin{align}\label{5e1}
&\lf\|\lf\{\sum_{i=1}^m
\lf[\frac{{\lambda}_i}{\|{\mathbf{1}}_{x_i+B_{l_i}}\|_{\lv}}\r]^{\underline{p}}
{\mathbf{1}}_{x_i+B_{l_i}}\r\}^{\frac1{\underline{p}}}\r\|_{\lv}^{-1}\noz\\
&\quad\quad\times\sum_{j=1}^m\lf\{\frac{{\lambda}_j|x_j+B_{l_j}|}{\|{\mathbf{1}}_{x_j+B_{l_j}}
\|_{\lv}}\int_{\rn}\frac{b^{\vaz l_j\frac{\ln\lz_-}{\ln b}}|f(x)-P^s_{x_j+B_{l_j}}f(x)|}
{b^{l_j(1+\vaz\frac{\ln\lz_-}{\ln b})}+[\rho(x-x_j)]
^{1+\vaz\frac{\ln\lz_-}{\ln b}}}\,dx\r\}\noz\\
&\hs\ls \|f\|_{\mathcal{L}_{\vp,1,s,\underline{p}}^A(\rn)}
+\Theta,
\end{align}
where
\begin{align*}
\Theta:=&\lf\|\lf\{\sum_{i=1}^m
\lf[\frac{{\lambda}_i}{\|{\mathbf{1}}_{x_i+B_{l_i}}\|_{\lv}}\r]^{\underline{p}}
{\mathbf{1}}_{x_i+B_{l_i}}\r\}^{\frac1{\underline{p}}}\r\|_{\lv}^{-1}\\
&\times \sum_{j=1}^m\lf\{\frac{{\lambda}_j}{\|{\mathbf{1}}_{x_j+B_{l_j}}
\|_{\lv}}\sum_{k=0}^{\fz}b^{-k({1+\vaz\frac{\ln\lz_-}{\ln b}})}\int_{b^{l_j+k}\le\rho(x-x_j)
<b^{l_j+k+1}}\lf|f(x)-P^s_{x_j+B_{l_j}}f(x)\r| \,dx\r\}\\
\ls&\lf\|\lf\{\sum_{i=1}^m
\lf[\frac{{\lambda}_i}{\|{\mathbf{1}}_{x_i+B_{l_i}}\|_{\lv}}\r]^{\underline{p}}
{\mathbf{1}}_{x_i+B_{l_i}}\r\}^{\frac1{\underline{p}}}\r\|_{\lv}^{-1}\\
&\times \sum_{j=1}^m\lf\{\frac{{\lambda}_j}{\|{\mathbf{1}}_{x_j+B_{l_j}}
\|_{\lv}}\sum_{k\in\nn}b^{-k({1+\vaz\frac{\ln\lz_-}{\ln b}})}\int_{x_j+B_{l_j+k}}\lf|f(x)-P^s_{x_j+B_{l_j+k}}f(x)\r| \,dx\r\}\\
&+\lf\|\lf\{\sum_{i=1}^m
\lf[\frac{{\lambda}_i}{\|{\mathbf{1}}_{x_i+B_{l_i}}\|_{\lv}}\r]^{\underline{p}}
{\mathbf{1}}_{x_i+B_{l_i}}\r\}^{\frac1{\underline{p}}}\r\|_{\lv}^{-1}\\
&\times \sum_{j=1}^m\lf\{\frac{{\lambda}_j}{\|{\mathbf{1}}_{x_j+B_{l_j}}
\|_{\lv}}\sum_{k\in\nn}b^{-k({1+\vaz\frac{\ln\lz_-}{\ln b}})}\int_{x_j+B_{l_j+k}}\lf|P^s_{x_j+B_{l_j+k}}f(x)-P^s_{x_j+B_{l_j}}f(x)\r| \,dx\r\}.
\end{align*}
This, together with the Tonelli theorem, $r\in(0,\underline{p})$, and Lemma \ref{5l2},
implies that
\begin{align}\label{5e2}
\Theta
&\ls \|f\|_{\mathcal{L}_{\vp,1,s,\underline{p}}^A(\rn)}
\sum_{k\in\nn}b^{-k({1-\frac {2}{r}+\vaz\frac{\ln\lz_-}{\ln b}})}
+\lf\|\lf\{\sum_{i=1}^m
\lf[\frac{{\lambda}_i}{\|{\mathbf{1}}_{x_i+B_{l_i}}\|_{\lv}}\r]^{\underline{p}}
{\mathbf{1}}_{x_i+B_{l_i}}\r\}^{\frac1{\underline{p}}}\r\|_{\lv}^{-1}\noz\\
&\quad\times \sum_{j=1}^m\lf\{\frac{{\lambda}_j}{\|{\mathbf{1}}_{x_j+B_{l_j}}
\|_{\lv}}\sum_{k\in\nn}b^{-k({1+\vaz\frac{\ln\lz_-}{\ln b}})}\int_{x_j+B_{l_j+k}}\lf|P^s_{x_j+B_{l_j+k}}f(x)
-P^s_{x_j+B_{l_j}}f(x)\r| \,dx\r\}.
\end{align}
On another hand, from \cite[(8.9)]{mb03}, we deduce that,
for any $x\in x_j+B_{l_j+k}$,
\begin{align*}
\lf|P^s_{x_j+B_{l_j+k}}f(x)-P^s_{x_j+B_{l_j}}f(x)\r|
&\le \sum_{\nu=1}^k\lf|P^s_{x_j+B_{l_j+\nu}}f(x)-P^s_{x_j+B_{l_j+\nu-1}}f(x)\r|\\
&=\sum_{\nu=1}^k\lf|P^s_{x_j+B_{l_j+\nu-1}}\lf(f-P^s_{x_j+B_{l_j+\nu}}f\r)(x)\r|\\
&\ls \sum_{\nu=1}^k \frac1{|x_j+B_{l_j+\nu-1}|}
\int_{x_j+B_{l_j+\nu}}\lf|f(x)-P^s_{x_j+B_{l_j+\nu}}f(x)\r|\,dx,
\end{align*}
which, combined with \eqref{5e2}, the Tonelli theorem, Lemma \ref{5l2},
and the fact that $\vaz\in([2/r-1]\ln b/\ln\lz_-,\fz)$,
further implies that
\begin{align*}
\Theta&\ls \|f\|_{\mathcal{L}_{\vp,1,s,\underline{p}}^A(\rn)}
\sum_{k\in\nn}b^{-k({1-\frac {2}{r}+\vaz\frac{\ln\lz_-}{\ln b}})}
+\lf\|\lf\{\sum_{i=1}^m
\lf[\frac{{\lambda}_i}{\|{\mathbf{1}}_{x_i+B_{l_i}}\|_{\lv}}\r]^{\underline{p}}
{\mathbf{1}}_{x_i+B_{l_i}}\r\}^{\frac1{\underline{p}}}\r\|_{\lv}^{-1}\\
&\quad\times \sum_{j=1}^m\lf\{\frac{{\lambda}_j}{\|{\mathbf{1}}_{x_j+B_{l_j}}
\|_{\lv}}\sum_{k\in\nn}b^{-k\vaz\frac{\ln\lz_-}{\ln b}}\sum_{\nu=1}^k b^{-\nu}\int_{x_j+B_{l_j+\nu}}\lf|f(x)-P^s_{x_j+B_{l_j+\nu}}f(x)\r| \,dx\r\}\\
&\ls \|f\|_{\mathcal{L}_{\vp,1,s,\underline{p}}^A(\rn)}
\sum_{k\in\nn}b^{-k({1-\frac {2}{r}+\vaz\frac{\ln\lz_-}{\ln b}})}
+\|f\|_{\mathcal{L}_{\vp,1,s,\underline{p}}^A(\rn)}
\sum_{k\in\nn}b^{-k\vaz\frac{\ln\lz_-}{\ln b}}\sum_{\nu=1}^k b^{\nu(2/r-1)}\\
&\ls \|f\|_{\mathcal{L}_{\vp,1,s,\underline{p}}^A(\rn)}
\sum_{k\in\nn}b^{-k({1-\frac {2}{r}+\vaz\frac{\ln\lz_-}{\ln b}})}
\ls\|f\|_{\mathcal{L}_{\vp,1,s,\underline{p}}^A(\rn)}.
\end{align*}
Combining this and \eqref{5e1}, we finally conclude that (iv) holds
true and hence finish the proof of Theorem \ref{2t2}.
\end{proof}

From Proposition \ref{2p1} and Theorem \ref{2t2}, we immediately deduce the
following equivalent characterizations of $\mathcal{L}_{\vp,q,s,\underline{p}}^A(\rn)$;
we omit the details.

\begin{corollary}\label{2c3}
If $\vp,\ q,\ s$, $\underline{p}$, and $\vaz$ are as in Theorem \ref{2t2},
then all the conclusions of Theorem \ref{2t2} still hold true with $m$ replaced by $\fz$,
and the supremum therein taken over all $\{x_j+B_{l_j}\}_{j\in\nn}\subset \BB$
with $\{x_j\}_{j\in\nn}\subset\rn$ and $\{l_j\}_{j\in\nn}\subset\zz$, and
$\{\lambda_j\}_{j\in\nn}\subset[0,\infty)$ satisfying
\begin{align*}
\lf\|\lf\{\sum_{j\in\nn}
\lf[\frac{{\lambda}_j}{\|{\mathbf{1}}_{x_j+B_{l_j}}\|_\lv}\r]^{\underline{p}}
{\mathbf{1}}_{x_j+B_{l_j}}\r\}^{\frac1{\underline{p}}}\r\|_{\lv}\in(0,\infty).
\end{align*}
\end{corollary}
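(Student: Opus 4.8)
The plan is to derive Corollary \ref{2c3} from Proposition \ref{2p1} and Theorem \ref{2t2} by the same passage-to-the-limit argument that turned Definition \ref{2d5} into Proposition \ref{2p1}. Concretely, for each of the four quantities in Theorem \ref{2t2} one introduces its countable analogue: replace the finite index set $\{1,\dots,m\}$ by $\nn$ and let the supremum run over all $\{x_j+B_{l_j}\}_{j\in\nn}\subset\BB$ and $\{\lambda_j\}_{j\in\nn}\subset[0,\fz)$ satisfying the normalization condition displayed in the corollary. Call these $\widetilde{\|f\|}_{\mathcal{L}_{\vp,1,s,\underline{p}}^A(\rn)}$, $\widetilde{\||f\||}_{\mathcal{L}_{\vp,q,s,\underline{p}}^A(\rn)}$, and $\widetilde{\|f\|}_{\mathcal{L}_{\vp,1,s,\underline{p}}^{A,\vaz}(\rn)}$; the fourth, $\widetilde{\|f\|}_{\mathcal{L}_{\vp,q,s,\underline{p}}^A(\rn)}$, is already handled by Proposition \ref{2p1} (it is the quantity called $\widetilde{\|f\|}_{\mathcal{L}_{\vp,q,s,\eta}^A(\rn)}$ there with $\eta=\underline{p}$). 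One direction is trivial: truncating a countable configuration to its first $m_0$ terms (with $m_0$ large enough that $\sum_{j=1}^{m_0}\lambda_j\neq0$) exhibits the finite quantity of Theorem \ref{2t2} as a lower bound, so each finite norm is $\le$ its countable counterpart.

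For the reverse inequality I would run, verbatim, the argument in the proof of Proposition \ref{2p1}. Fix $\{x_j+B_{l_j}\}_{j\in\nn}$ and $\{\lambda_j\}_{j\in\nn}$ satisfying the normalization, and observe that, since the normalizing quasi-norm in the denominator is finite and independent of $m$, the partial sums in the numerator converge (monotone convergence for the $\lv$ quasi-norm of the increasing truncations, together with absolute convergence of the series in $j$), so the finite-$m$ ratio tends to the countable ratio as $m\to\fz$. Hence for any $\ve\in(0,\fz)$ there is $m_0\in\nn$ with $\sum_{j=1}^{m_0}\lambda_j\neq0$ such that the countable ratio is bounded by the corresponding finite-$m_0$ ratio plus $\ve$, and the latter is dominated by the relevant norm of Theorem \ref{2t2}. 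Letting $\ve\to0$ and taking the supremum over all admissible countable configurations gives each countable quantity $\le$ the matching norm of Theorem \ref{2t2}. Combining the two directions yields, for each of the four quantities, equality of the countable version with the finite version, and then Theorem \ref{2t2} (together with Proposition \ref{2p1} for the $L^q$-averaged quantity) transfers all the equivalences and the chain of $\sim$ relations to the countable setting.

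The only real point requiring care — and the place I would spend a sentence or two being precise — is the interchange of limits in the numerator: one must check that $\lim_{m\to\fz}\bigl\|\{\sum_{i=1}^m[\lambda_i/\|\mathbf 1_{x_i+B_{l_i}}\|_{\lv}]^{\underline p}\mathbf 1_{x_i+B_{l_i}}\}^{1/\underline p}\bigr\|_{\lv}$ equals the corresponding norm over $\nn$, which follows from the Fatou/monotone-convergence property of the Luxemburg quasi-norm in $\lv$ applied to the nondecreasing sequence of truncated sums, and that the series $\sum_{j\in\nn}\tfrac{\lambda_j}{\|\mathbf 1_{x_j+B_{l_j}}\|_{\lv}}\int(\cdots)$ has nonnegative terms so its partial sums converge to its total. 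Both are routine, which is exactly why the corollary's proof is omitted in the text; accordingly I would present it as a short remark: ``this is proved by repeating the argument of Proposition \ref{2p1} in each of the four cases, and we omit the details.''
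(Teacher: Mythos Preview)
Your proposal is correct and takes essentially the same approach as the paper: the authors simply state that Corollary \ref{2c3} follows immediately from Proposition \ref{2p1} and Theorem \ref{2t2} and omit the details. Your elaboration of how the limiting argument of Proposition \ref{2p1} transfers to each of the four quantities is exactly the routine verification they had in mind.
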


Via these equivalent characterizations of the anisotropic
variable Campanato-type space, we next establish the Carleson measure
characterization of $\mathcal{L}_{\vp,1,s,\underline{p}}^A({\rn})$. To
begin with, we introduce the $\vp$-Carleson measure as follows.

\begin{definition}\label{4d1}
Let $\vp\in \mathcal{P}(\rn)$. A Borel measure $d\mu$ on $\rn\times\zz$
is called a \emph{$\vp$-Carleson measure} if
\begin{align*}
\lf\|d\mu\r\|_{\mathcal{C}_{\vp,A}}:=\sup
\lf\|\lf\{\sum_{i=1}^m
\lf[\frac{{\lambda}_i}{\|{\mathbf{1}}_{B^{(i)}}\|_{\lv}}\r]^{\eta}
{\mathbf{1}}_{B^{(i)}}\r\}^{1/{\eta}}\r\|_{\lv}^{-1}\sum_{j=1}^m\frac{{\lambda}_j|B^{(j)}|^{1/2}}{\|{\mathbf{1}}_{B^{(j)}}
\|_{\lv}}\lf[\int_{\widehat{B^{(j)}}}\,|d\mu(x,k)|\r]^{1/2}
\end{align*}
is finite, where $\eta\in(0,\fz)$ and the supremum
is taken over all $m\in\nn$, $\{B^{(j)}\}_{j=1}^m\subset \BB$, and
$\{\lambda_j\}_{j=1}^m\subset[0,\infty)$ with $\sum_{j=1}^m\lambda_j\neq0$,
and, for any $j\in\{1,\ldots,m\}$,
$\widehat{B^{(j)}}$ denotes the \emph{tent} over $B^{(j)}$,
namely,
$$\widehat{B^{(j)}}:=\lf\{(y,k)\in\rn\times\zz:\ y+B_k\subset B^{(j)}\r\}.$$
\end{definition}

\begin{remark}\label{4r1}
Let $\vp$, $\eta$, and $d\mu$ be as in Definition \ref{4d1} and
\begin{align*}
\widetilde{\lf\|d\mu\r\|}_{\mathcal{C}_{\vp,A}}:=\sup
\lf\|\lf\{\sum_{i\in\nn}
\lf[\frac{{\lambda}_i}{\|{\mathbf{1}}_{B^{(i)}}\|_{\lv}}\r]^{\eta}
{\mathbf{1}}_{B^{(i)}}\r\}^{1/{\eta}}\r\|_{\lv}^{-1}
\sum_{j\in\nn}\frac{{\lambda}_j|B^{(j)}|^{1/2}}{\|{\mathbf{1}}_{B^{(j)}}
\|_{\lv}}\lf[\int_{\widehat{B^{(j)}}}\,|d\mu(x,k)|\r]^{1/2},
\end{align*}
where the supremum is taken over all
$\{B^{(j)}\}_{j\in\nn}\subset \BB$ and
$\{\lambda_j\}_{j\in\nn}\subset[0,\infty)$ satisfying
\begin{align*}
\lf\|\lf\{\sum_{j\in\nn}
\lf[\frac{{\lambda}_j}{\|{\mathbf{1}}_{B^{(j)}}\|_{\lv}}\r]^{\eta}
{\mathbf{1}}_{B^{(j)}}\r\}^{1/\eta}\r\|_{\lv}\in(0,\infty).
\end{align*}
Then $\widetilde{\lf\|d\mu\r\|}_{\mathcal{C}_{\vp,A}}
=\lf\|d\mu\r\|_{\mathcal{C}_{\vp,A}}$.
\end{remark}

In what follows, for any given $k\in\zz$, define
\begin{align*}
\delta_k(j):=\left\{
\begin{array}{cl}
\vspace{0.15cm}
1 &{\rm when}~~ j=k,\\
0 &{\rm when}~~ j\neq k.
\end{array}\r.
\end{align*}
Let $C_{\rm c}^{\fz}(\rn)$ denote the set of all
\emph{infinitely differentiable functions with compact support}
on $\rn$ and, for any $\varphi\in\cs(\rn)$,
$\widehat{\varphi}$ denote its \emph{Fourier transform},
namely, for any $\xi\in\rn$,
\begin{align*}
\widehat \varphi(\xi) := \int_{\rn} \varphi(x) e^{-2\pi\imath x \cdot \xi} \, dx,
\end{align*}
where $\imath:=\sqrt{-1}$ and $x\cdot\xi :=\sum_{i=1}^{n}x_i \xi_i$
for any $x:=(x_1,\ldots,x_n)$, $\xi:=(\xi_1,\ldots,\xi_n) \in \rn$.
Let $s\in\zz_+$ and $\phi\in C_{\rm c}^{\fz}(\rn)$ satisfy
\begin{align}\label{4e1'}
\supp\phi\subset B_0,~~~
\int_{\rn}x^\gamma\phi(x)\,dx=0,~~~\forall~\gamma\in\zz_+^n~~
{\rm with}~~|\gamma|\le s,
\end{align}
and there exists a positive constant $C$ such that
\begin{align}\label{4e2}
|\widehat{\phi}(\xi)|\ge C
~~~{\rm when}~~~\xi\in\lf\{x\in\rn:\ (2\|A\|)^{-1}\le\rho(x)\le 1\r\},
\end{align}
where the dilation $A:=(a_{i,j})_{1\le i,j\le n}$ and $\|A\|:=(\sum_{i,j=1}^n|a_{i,j}|^2)^{1/2}$.

We now show the Carleson measure characterization of
$\mathcal{L}_{\vp,1,s,\underline{p}}^A({\rn})$ as follows.

\begin{theorem}\label{2t3}
Let $\vp$, $s$, and $\underline{p}$ be as in Theorem \ref{2t1},
$p_+\in(0,2)$, and $\phi\in\cs(\rn)$ be
a radial real-valued function satisfying \eqref{4e1'} and \eqref{4e2}.
\begin{enumerate}
\item[{\rm (i)}]
If $b\in \mathcal{L}_{\vp,1,s,\underline{p}}^A({\rn})$, then,
for any $(x,k)\in\rn\times\zz$, $d\mu(x,k):=\sum_{\ell\in\zz}
|\phi_{-\ell}\ast b(x)|^2\,dx\,\delta_{\ell}(k)$ is a
$\vp$-Carleson measure on $\rn\times\zz$;
moreover, there exists a positive constant $C$, independent of $b$, such that
$\|d\mu\|_{\mathcal{C}_{\vp,A}}
\le C\|b\|_{\mathcal{L}_{\vp,1,s,\underline{p}}^A({\rn})}$.

\item[{\rm (ii)}]
If $b\in L^2_{\rm loc}(\rn)$ and, for any
$(x,k)\in\rn\times\zz$, $d\mu(x,k):=\sum_{\ell\in\zz}
|\phi_{-\ell}\ast b(x)|^2\,dx\,\delta_{\ell}(k)$ is
a $\vp$-Carleson measure on $\rn\times\zz$,
then $b\in\mathcal{L}_{\vp,1,s,\underline{p}}^A({\rn})$
and, moreover, there exists a positive
constant $C$, independent of $b$, such that
$\|b\|_{\mathcal{L}_{\vp,1,s,\underline{p}}^A({\rn})}
\le C\|d\mu\|_{\mathcal{C}_{\vp,A}}$.
\end{enumerate}
\end{theorem}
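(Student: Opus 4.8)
The plan is to prove both directions via the duality theorem (Theorem \ref{2t1} in the form of Corollary \ref{2c1}, identifying $\mathcal{L}_{\vp,1,s,\underline{p}}^A(\rn)$ with the dual of $\vh$ via the Lusin area function characterization of $\vh$ from \cite[Theorem 6.1]{lwyy17}) together with the atomic decomposition of the anisotropic variable tent space (Lemma \ref{tent}). The bridge between the Campanato-type pairing and the Carleson-measure quantity is the standard "$\int\!\!\int$ pairing'' identity: for suitable $f$ and $b$,
\begin{align*}
\int_{\rn}f(x)b(x)\,dx=c\sum_{\ell\in\zz}\int_{\rn}\phi_{-\ell}\ast f(x)\,\overline{\phi_{-\ell}\ast b(x)}\,dx,
\end{align*}
which holds because $\phi$ satisfies the Calder\'on-type reproducing condition \eqref{4e2}; here the right-hand side is the pairing of the tent-space function $(x,\ell)\mapsto\phi_{-\ell}\ast f(x)$ against the measure $d\mu$. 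The hypothesis $p_+\in(0,2)$ guarantees that the relevant tent space is built on $L^2$-type fibers, so that $L^2_{\rm loc}$ is the natural ambient class for $b$ and the Cauchy--Schwarz step in the pairing is legitimate.

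For part (i), assume $b\in\mathcal{L}_{\vp,1,s,\underline{p}}^A(\rn)$. Fix $m\in\nn$, dilated balls $\{B^{(j)}\}_{j=1}^m$, and coefficients $\{\lambda_j\}_{j=1}^m\subset[0,\fz)$ with $\sum_j\lambda_j\neq0$. For each $j$ I would test against a normalized tent atom supported in the tent $\widehat{B^{(j)}}$ associated with the Carleson quantity; concretely, choose $A_j\in L^2(\widehat{B^{(j)}})$ with $\|A_j\|_{L^2}=1$ realizing (up to a constant) the supremum defining $[\int_{\widehat{B^{(j)}}}|d\mu(x,k)|]^{1/2}$, set $f_j:=\sum_{\ell}\phi_{-\ell}\ast A_j(\cdot,\ell)$ so that $f_j$ is (a constant multiple of) a $(\vp,2,s)$-atom supported in a fixed dilate of $B^{(j)}$ (using the support and vanishing-moment conditions \eqref{4e1'} on $\phi$, the reproducing property \eqref{4e2}, and the $L^2$-boundedness of the relevant square function), then apply the pairing identity to $f_j$ and $b$. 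Summing over $j$ with weights $\lambda_j/\|\mathbf 1_{B^{(j)}}\|_{\lv}$, the left side is controlled by $\|b\|_{\mathcal{L}_{\vp,1,s,\underline{p}}^A(\rn)}$ times the $\lv$-quasinorm of the $\underline{p}$-sum of balls (by the very definition of the Campanato-type space and Remark \ref{2r1}(ii)), while the right side is exactly the Carleson-measure sum; dividing gives $\|d\mu\|_{\mathcal{C}_{\vp,A}}\lesssim\|b\|_{\mathcal{L}_{\vp,1,s,\underline{p}}^A(\rn)}$ after taking the supremum.

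For part (ii), assume $b\in L^2_{\rm loc}(\rn)$ and $d\mu$ is a $\vp$-Carleson measure. I would show $b$ induces a bounded linear functional on $\vh$ via the pairing $f\mapsto\int_{\rn}f(x)b(x)\,dx$, first for $f\in H_{A,{\rm fin}}^{\vp,2,s}(\rn)$. Writing the pairing as the tent-space pairing $\sum_\ell\int\phi_{-\ell}\ast f(x)\overline{\phi_{-\ell}\ast b(x)}\,dx$, I would invoke the atomic decomposition of the tent space (Lemma \ref{tent}) to write the tent function of $f$ as $\sum_i\mu_i\mathfrak a_i$ with tent atoms $\mathfrak a_i$ supported in tents $\widehat{B^{(i)}}$ and $\|\{\sum_i[\mu_i/\|\mathbf 1_{B^{(i)}}\|_{\lv}]^{\underline p}\mathbf 1_{B^{(i)}}\}^{1/\underline p}\|_{\lv}\lesssim\|f\|_{\vh}$ (this is where the Lusin area function characterization of $\vh$ enters, together with $p_+<2$). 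Then Cauchy--Schwarz on each tent against $d\mu$ restricted to $\widehat{B^{(i)}}$ gives $|\int fb|\lesssim\sum_i\mu_i\|\mathbf 1_{B^{(i)}}\|_{\lv}^{-1}|\int_{\widehat{B^{(i)}}}|d\mu||^{1/2}\cdot(\text{normalization})\lesssim\|d\mu\|_{\mathcal{C}_{\vp,A}}\|f\|_{\vh}$. Since $H_{A,{\rm fin}}^{\vp,2,s}(\rn)$ is dense in $\vh$, this extends to all of $\vh$, so by Theorem \ref{2t1} (with $q=2$, legitimate since $p_+<2$) there is $g\in\mathcal{L}_{\vp,2,s,\underline{p}}^A(\rn)=\mathcal{L}_{\vp,1,s,\underline{p}}^A(\rn)$ (Corollary \ref{2c1}) representing the functional with $\|g\|\lesssim\|d\mu\|_{\mathcal{C}_{\vp,A}}$; a standard argument testing against atoms of the form $\varphi_t\ast a$ (as in the proof of Theorem \ref{2t1}, Case $q=\fz$) identifies $g$ with $b$ modulo $\mathbb P_s(\rn)$.

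The main obstacle I expect is the careful bookkeeping in the tent-atom construction and the reproducing-formula identity: verifying that $f_j$ in part (i) is genuinely a fixed multiple of a $(\vp,2,s)$-atom supported in a controlled dilate of $B^{(j)}$ — this requires the support condition \eqref{4e1'} to localize $\phi_{-\ell}\ast A_j$ and the vanishing moments of $\phi$ to transfer to $f_j$, plus the $L^2$-bound $\|f_j\|_{L^2}\lesssim\|A_j\|_{L^2(\widehat{B^{(j)}})}=1$ — and, symmetrically in part (ii), controlling the tent-space pairing against a general $\vp$-Carleson measure requires that the atomic decomposition of the tent space (Lemma \ref{tent}) interacts correctly with the $\underline p$-convexity built into both the Campanato-type quasinorm and the $\mathcal{C}_{\vp,A}$ quasinorm, so that the weighted sums over $i$ (resp.\ $j$) can be dominated by the corresponding $\lv$-quasinorm of a $\underline p$-sum of characteristic functions. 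The discrete (rather than continuous) nature of the tent variable $k\in\zz$ here is a convenience, not a difficulty, since all dyadic-annulus sums converge geometrically thanks to the expansiveness of $A$.
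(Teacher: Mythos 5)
For part (ii) your strategy is essentially the paper's: convert $\int fb$ into a tent-space pairing via a Calder\'on-type reproducing formula, apply the tent-space atomic decomposition (Lemma \ref{tent}) together with the Lusin area function characterization of $\vh$, Cauchy--Schwarz against $d\mu$ on each tent, and invoke Theorem \ref{2t1} and Corollary \ref{2c1}. There is one technical error that must be fixed, though: you write $\int fb\sim\sum_\ell\int\phi_{-\ell}\ast f\,\overline{\phi_{-\ell}\ast b}$ and attribute it to \eqref{4e2}. The lower bound $|\widehat{\phi}|\geq C$ on an annulus does not by itself make $\sum_\ell|\widehat{\phi}((A^\ast)^\ell\xi)|^2$ constant, so the formula with the same $\phi$ on both sides is false in general. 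One needs a companion $\varphi\in\cs(\rn)$ with $\supp\widehat{\varphi}$ compact and away from the origin and $\sum_k\widehat{\varphi}((A^\ast)^k\xi)\widehat{\phi}((A^\ast)^k\xi)=1$, giving $\int fb\sim\sum_\ell\int\varphi_{-\ell}\ast f\,\overline{\phi_{-\ell}\ast b}$; the tent-space decomposition is then applied to $(x,\ell)\mapsto\varphi_{-\ell}\ast f(x)$, which is exactly what the paper does (citing \cite[(2.10)]{fl}).

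For part (i) your route is genuinely different from the paper's. You propose to realize $[\int_{\widehat{B^{(j)}}}|d\mu|]^{1/2}$ by $L^2$-duality on the tent, push the extremizing tent function $A_j$ through $\pi_\phi:\ G\mapsto\sum_\ell\phi_{-\ell}\ast G(\cdot,\ell)$ to obtain a function supported in $B^{(j)}$ with the required vanishing moments and $L^2$-size, normalize it into a $(\vp,2,s)$-atom, and then feed the weighted finite sum into the atom--Campanato pairing estimate underlying Theorem \ref{2t1}. (For this step you do not actually need the reproducing formula: Fubini and the radiality of $\phi$ give $\int (\pi_\phi A_j)b=\sum_\ell\int A_j(\cdot,\ell)\,\phi_{-\ell}\ast b$ directly.) This is correct in outline, but it requires proving, as a separate lemma, that $\pi_\phi$ maps anisotropic $(\vp,2)$-tent-atoms to constant multiples of $(\vp,2,s)$-atoms of $\vh$; the support, moment, and $L^2$-size claims you list are all true, but that verification (the ``projection'' half of tent-space theory) is not available in the paper and must be supplied. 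The paper's own proof of (i) avoids this entirely: for each ball it decomposes $b=P^s_{x_j+B_{l_j}}b+(b-P^s)\mathbf 1_{x_j+B_{l_j+\omega}}+(b-P^s)\mathbf 1_{(x_j+B_{l_j+\omega})^\complement}$, kills the polynomial piece by the vanishing moments of $\phi$, controls the local piece by the $L^2$-boundedness of the $g$-function together with $p_+<2$ and Corollary \ref{2c1}, and controls the tail via the kernel characterization $\|\cdot\|_{\mathcal{L}_{\vp,1,s,\underline p}^{A,\vaz}}$ from Theorem \ref{2t2}(iv). Your approach bypasses Theorem \ref{2t2}(iv) and is structurally symmetric with part (ii), at the cost of the unproved $\pi_\phi$-lemma; the paper's approach is longer but self-contained given Theorem \ref{2t2}.

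Two smaller remarks: first, the quantity $\int_{\widehat{B^{(j)}}}|d\mu|$ is a number, not a supremum, so ``$A_j$ realizing the supremum'' should read ``$A_j$ realizing the $L^2(\widehat{B^{(j)}})$-duality for $(x,\ell)\mapsto\phi_{-\ell}\ast b(x)$,'' with the usual truncation if that $L^2$-norm is not known finite a priori (a non-issue since the uniform bound over $\|A_j\|_{L^2}\le1$ forces finiteness). Second, the support bookkeeping is a bit better than you state: with $\supp\phi\subset B_0$ and $\supp A_j(\cdot,\ell)\subset\{y:\ y+B_\ell\subset B^{(j)}\}$, one has $\supp\,\pi_\phi A_j\subset B^{(j)}$ exactly, with no dilation needed.
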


To prove the above Carleson measure characterization, we need
the anisotropic variable tent space and its atomic decomposition.
However, as far as we know, the theory about anisotropic variable tent spaces is blank
in the literature. Thus, we first introduce the anisotropic variable tent space
and then establish its atomic decomposition.
For any $x\in\rn$, let
$$\Gamma(x):=\{(y,k)\in\rn\times\zz:\ y\in x+B_k\}$$
be the \emph{cone} of aperture $1$ with vertex $x\in\rn$.
For any measurable function $G$ on $\rn\times\zz$,
the \emph{anisotropic discrete Lusin area function $\ca(G)$} is defined
by setting, for any $x\in\rn$,
\begin{equation*}
\ca (G)(x):=\lf[\sum_{\ell\in\zz}b^{-\ell}
\int_{\{y\in\rn:\ (y,\ell)\in\Gamma(x)\}}|G(y,\ell)|^2\,
dy\r]^\frac12.
\end{equation*}

Via this anisotropic discrete Lusin area function, we introduce the anisotropic
variable tent space as follows.

\begin{definition}\label{d1}
Let $\vp\in \mathcal{P}(\rn)$. The \emph{anisotropic variable tent space}
$T^{\vp}_A(\rn\times\zz)$ is defined to be the set of all measurable
functions $G$ on $\rn\times\zz$ such that $\ca (G)\in \lv$
and equipped with the quasi-norm
$\|G\|_{T^{\vp}_A(\rn\times\zz)}:=\|\ca (G)\|_{\lv}$.
\end{definition}

We next give the definition of anisotropic $(\vp,\fz)$-atoms.

\begin{definition}\label{1q1}
Let $\vp\in \mathcal{P}(\rn)$ and $q\in(1,\fz)$.
A measurable function $a$ on $\rn\times\zz$ is called an
\emph{anisotropic $(\vp,q)$-atom} if there
exists a ball $B\in\BB$ such that
\begin{itemize}
\item[(i)] $\supp a:=\{(x,k)\in\rn\times\zz:\ a(x,k)\neq0\}\subset
\widehat{B}$,
\item[(ii)] $\|a\|_{T_2^{q}(\rn\times\zz)}
:=\|\ca (a)\|_{L^q(\rn)}\le\frac{|B|^{1/q}}{\|\mathbf{1}_B\|_{\lv}}$.
\end{itemize}
Moreover, if $a$ is an anisotropic $(\vp,q)$-atom for any $q\in(1,\infty)$,
then $a$ is called an \emph{anisotropic $(\vp,\fz)$-atom}.
\end{definition}

For functions in the anisotropic variable tent space $T^{\vp}_A(\rn\times\zz)$,
we have the following atomic decomposition.

\begin{lemma}\label{tent}
Let $\vp\in C^{\log}(\rn)$. Then, for any
$G\in T^{\vp}_A(\rn\times\zz)$, there exist
$\{\lambda_j\}_{j\in\nn}\subset [0,\infty)$,
$\{B^{(j)}\}_{j\in\nn}\subset\BB$,
and a sequence $\{A_j\}_{j\in\nn}$ of anisotropic $(\vp,\infty)$-atoms
supported, respectively, in
$\{\widehat{B^{(j)}}\}_{j\in\nn}$ such that, for almost every
$(x,k)\in\rn\times\zz$,
\begin{equation*}
G(x,k)=\sum_{j\in\nn}\lambda_jA_j(x,k)\quad\text{and}\quad|G(x,k)|
=\sum_{j\in\nn}\lambda_j|A_j(x,k)|
\end{equation*}
pointwisely, and
\begin{align}\label{3e3}
\lf\|\lf\{\sum_{j\in\nn}\lf[\frac{\lambda_j}
{\|\mathbf1_{B^{(j)}}\|_{\lv}}\r]^{\underline{p}}
\mathbf1_{B^{(j)}}\r\}^{1/\underline{p}}\r\|_{\lv}
\ls \|G\|_{T^{\vp}_A(\rn\times\zz)},
\end{align}
where the implicit positive constant is independent of $G$.
\end{lemma}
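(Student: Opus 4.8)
The plan is to transplant the Coifman--Meyer--Stein tent-space decomposition to the anisotropic discrete variable setting. Fix $G\in T^{\vp}_A(\rn\times\zz)$. Since $\ca(G)\in\lv$, the function $\ca(G)$ is finite a.e., and, because each cone $\Gamma(x)$ is built from the \emph{open} dilated balls $x+B_\ell$, it is lower semicontinuous; hence, for every $k\in\zz$, the level set $O_k:=\{x\in\rn:\ \ca(G)(x)>2^k\}$ is open, with $O_{k+1}\st O_k$, $\bigcap_{k\in\zz}O_k$ null, and $\bigcup_{k\in\zz}O_k=\{\ca(G)>0\}$. Fix a small $\lambda\in(0,1)$ and set $O_k^*:=\{x\in\rn:\ \HL({\mathbf 1}_{O_k})(x)>\lambda\}$, an open set containing $O_k$; choosing $\lambda$ small enough (using only the quasi-triangle inequality for $\rho$, the comparability of $-\Delta$ with a fixed dilate of $\Delta$, and that $y+B_\ell$ sits in a fixed dilate of any dilated ball meeting its complement) forces: if $y+B_\ell\not\st O_k^*$, then $|\{x\notin O_k:\ (y,\ell)\in\Gamma(x)\}|\gs b^\ell$. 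The key quantitative fact is that the $\lv$-size of these enlargements is still controlled: since $\HL$ is unbounded on $\lv$ when $p_-\le1$, I would instead write ${\mathbf 1}_{O_k^*}\le\lambda^{-t}[\HL({\mathbf 1}_{O_k})]^t$ for a fixed $t\in(1,\fz)$ and apply the Fefferman--Stein vector-valued maximal inequality (see \cite{CUF}) in the rescaled space $L^{\vp t/\underline{p}}(\rn)$, whose lower index $p_-t/\underline{p}$ exceeds $1$, together with the homogeneity identity $\||g|^{\theta}\|_{\lv}=\|g\|_{L^{\theta\vp}(\rn)}^{\theta}$, to obtain
\[
\lf\|\lf\{\sum_{k\in\zz}2^{k\underline{p}}{\mathbf 1}_{O_k^*}\r\}^{1/\underline{p}}\r\|_{\lv}\ls\lf\|\lf\{\sum_{k\in\zz}2^{k\underline{p}}{\mathbf 1}_{O_k}\r\}^{1/\underline{p}}\r\|_{\lv}\sim\|\ca(G)\|_{\lv}=\|G\|_{T^{\vp}_A(\rn\times\zz)},
\]
where the last equivalence is the elementary pointwise comparison $\{\sum_{k\in\zz}2^{k\underline{p}}{\mathbf 1}_{O_k}(x)\}^{1/\underline{p}}\sim\ca(G)(x)$.

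Next I would invoke the anisotropic Whitney decomposition of $O_k^*$ from \cite{mb03}, giving dilated balls $\{x_{k,i}+B_{l_{k,i}}\}_i$ with union $O_k^*$, bounded overlap, and scales comparable, in the $\rho$-quasi-distance, to the distance from $x_{k,i}+B_{l_{k,i}}$ to $(O_k^*)^\complement$. Enlarging each to $B^{(k,i)}:=x_{k,i}+B_{l_{k,i}+\sigma_0}$ for a fixed $\sigma_0$, one has $\wh{O_k^*}\st\bigcup_i\wh{B^{(k,i)}}$. Partition the annular tent $\wh{O_k^*}\setminus\wh{O_{k+1}^*}$ among the tents $\{\wh{B^{(k,i)}}\}_i$ (assigning $(y,\ell)$ to the least admissible index $i$) to obtain pairwise disjoint sets $E_{k,i}$; running over all pairs $(k,i)$, relabelled as $j\in\nn$, and applying the Lebesgue differentiation theorem to the shadow $\{x:\ (y,\ell)\in\Gamma(x)\}$ of a point of $\supp G$, one checks that $\bigcup_j E_j$ covers $\supp G$ up to a null set. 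Put $B^{(j)}:=B^{(k,i)}$, $\lambda_j:=C2^k\|{\mathbf 1}_{B^{(j)}}\|_{\lv}$ with $C$ an absolute constant to be fixed below, and $A_j:=\lambda_j^{-1}G{\mathbf 1}_{E_j}$; then $\supp A_j\st\wh{B^{(j)}}$ and, by the disjointness of the $E_j$, the identities $G=\sum_j\lambda_jA_j$ and $|G|=\sum_j\lambda_j|A_j|$ hold pointwise a.e.

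The core of the proof is the verification that each $A_j$ is an anisotropic $(\vp,\fz)$-atom, i.e.\ $\|\ca(A_j)\|_{L^q(\rn)}\le|B^{(j)}|^{1/q}/\|{\mathbf 1}_{B^{(j)}}\|_{\lv}$ for \emph{every} $q\in(1,\fz)$. Since $\ca(G{\mathbf 1}_{E_j})$ vanishes outside a fixed dilate of $B^{(j)}$ (of measure $\sim|B^{(j)}|$), this reduces to bounding $\ca(G{\mathbf 1}_{E_j})$ by $C2^k$. The $L^2$-estimate is routine: from the exact identity $\|\ca(G{\mathbf 1}_{E_j})\|_{L^2(\rn)}^2=\sum_{\ell\in\zz}\int_\rn|G{\mathbf 1}_{E_j}(y,\ell)|^2\,dy$ and the fact that every $(y,\ell)\in E_j$ lies outside $\wh{O_{k+1}^*}$ (so the density property above gives $|\{x\in(O_{k+1})^\complement:\ (y,\ell)\in\Gamma(x)\}|\gs b^\ell$), one gets $\|\ca(G{\mathbf 1}_{E_j})\|_{L^2(\rn)}^2\ls\int_{(O_{k+1})^\complement}[\ca(G{\mathbf 1}_{E_j})(x)]^2\,dx\le(2^{k+1})^2\,|\{\ca(G{\mathbf 1}_{E_j})\neq0\}|\ls 2^{2k}|B^{(j)}|$; combined with H\"older's inequality and the localization, this already yields the $(\vp,q)$-atom bound for $q\in(1,2]$. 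The main obstacle is upgrading to all $q\in(1,\fz)$, equivalently, the pointwise bound $\ca(G{\mathbf 1}_{E_j})\ls 2^k$: one must exploit more carefully that $E_j$ avoids the tent $\wh{O_{k+1}^*}$, so that at each scale $\ell$ a definite portion of the shadow of every contributing $(y,\ell)$ meets $(O_{k+1})^\complement$, and then dominate the resulting cone-integrals — after a fixed, harmless widening of aperture, which is absorbed by a change-of-aperture comparison for $\ca$ — by values $[\ca(G)(z)]^2\le 4^{k+1}$ at points $z\notin O_{k+1}$ chosen in terms of $x$ and the scales present; throughout, the genuine non-symmetry of $x+B_\ell=x+A^\ell\Delta$ has to be handled via the quasi-triangle inequality for $\rho$ and the comparability of $-\Delta$ with a fixed dilate of $\Delta$.

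Finally, assembling: with $\lambda_j\sim 2^k\|{\mathbf 1}_{B^{(j)}}\|_{\lv}$ one has $\sum_j[\lambda_j/\|{\mathbf 1}_{B^{(j)}}\|_{\lv}]^{\underline{p}}{\mathbf 1}_{B^{(j)}}\sim\sum_{k\in\zz}2^{k\underline{p}}\sum_i{\mathbf 1}_{B^{(k,i)}}\ls\sum_{k\in\zz}2^{k\underline{p}}{\mathbf 1}_{O_k^*}$, by the bounded overlap of each Whitney family together with the fact that each $B^{(k,i)}$ is contained in a fixed dilate of $O_k^*$ (whose $\lv$-measure is controlled by that of $O_k^*$ via the first paragraph's mechanism); taking $\lv$-quasi-norms and invoking the displayed inequality yields \eqref{3e3}. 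The pointwise identity then gives the required convergence of $\sum_j\lambda_jA_j$ to $G$, which completes the proof.
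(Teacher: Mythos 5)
Your proposal reproduces the paper's overall Coifman--Meyer--Stein skeleton: the level sets $O_k$ of $\ca(G)$, their maximal-function enlargements $O_k^*$, an anisotropic Whitney-type cover, the disjoint tent partition of $\widehat{O_k^*}\setminus\widehat{O_{k+1}^*}$, the scaling $\lambda_j\sim 2^k\|\mathbf 1_{B^{(j)}}\|_{\lv}$, and the recombination estimate for \eqref{3e3}. The $L^2$ verification, and hence the $(\vp,q)$-atom property for $q\in(1,2]$ via H\"older and localization, is fine, and the enlargement estimate via the vector-valued maximal inequality is the same mechanism the paper uses through $\mathbf1_{(O_j)^*_\gamma}\ls[\HL(\mathbf1_{O_j})]^{1/\tau}$ and \cite[Lemma 4.4]{lwyy17}.

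The genuine gap is in your treatment of the $(\vp,q)$-atom bound for $q\in(2,\infty)$. You try to reduce it to the \emph{global} pointwise inequality $\ca(G\mathbf 1_{E_j})(x)\ls 2^k$, arguing that a harmless aperture widening plus a change-of-aperture comparison for $\ca$ will produce it. But the change-of-aperture comparison for the Lusin area function is an $L^p$-norm inequality, not a pointwise one, so once you enlarge the aperture to reach a point $z\notin O_{k+1}$ you have no way to come back to the aperture-one function $\ca(G)(z)$ pointwise; and if $x$ lies deep inside $O_{k+1}^*$, the available witnesses $z\notin O_{k+1}^*$ sit at a distance comparable to the scales appearing in $\Gamma(x)\cap E_j$, so there is no uniform aperture $\sigma$ and no single $z$ that controls all scales. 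In short, the pointwise bound on all of $\rn$ is not established and is not an obvious consequence of the construction. The paper avoids this entirely: it verifies the $(\vp,q)$-atom bound by testing against an arbitrary $h\in T_2^{q'}(\rn\times\zz)$ with $\|h\|_{T_2^{q'}}\le 1$, and uses the density lemma \cite[Lemma 1.3]{fl} (the anisotropic analogue of the CMS averaging lemma) to rewrite $\langle A_{j,k},h\rangle$ as an integral over $(O_{k+1})^\complement$ of $\ca(A_{j,k})\,\ca(h)$. On $(O_{k+1})^\complement$ one does have the pointwise bound $\ca(G)\le 2^{k+1}$, and then H\"older plus $(T_2^q)^*=T_2^{q'}$ finishes. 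So the pointwise control is only needed on $(O_{k+1})^\complement$, which is exactly what is available; your argument, by contrast, needs it everywhere, which is not provable by the sketch you gave. To repair the proposal, replace the ``pointwise bound'' step with this duality argument.

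Two smaller points: the assertion that the $(\vp,q)$-atom bound for all $q$ is \emph{equivalent} to the pointwise bound is false (pointwise is strictly stronger and, as above, not needed); and the choice $\lambda_j=C2^k\|\mathbf 1_{B^{(j)}}\|_{\lv}$ should actually be tied to the Whitney ball $x^j_k+B^j_{l_k}$ exactly as in the paper (the atom is $G\mathbf 1_{C_{j,k}}$ normalized by $2^{-j}\|\mathbf 1_{x^j_k+B^j_{l_k}}\|_{\lv}^{-1}$), so that the supports $\widehat{B^{(j)}}$ in the statement are the tents over the Whitney balls and the recombination sum in \eqref{3e3} collapses correctly.
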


\begin{proof}
Let $\vp\in C^{\log}(\rn)$ and $G\in T^{\vp}_A(\rn\times\zz)$.
For any $j\in\zz$, let
$O_j:=\{x\in\rn:\ \ca(G)(x)>2^j\}$, $G_j:=(O_j)^{\com}$, and,
for any given $\gamma\in(0,1),$
$$(O_j)^*_{\gamma}:=\lf\{x\in\rn:\ \HL(\mathbf{1}_{O_j})(x)>1-\gamma\r\},$$
here and thereafter, $\HL$ denotes the anisotropic Hardy--Littlewood maximal operator,
namely, for any $f\in L^1_{{\rm loc}}(\rn)$ and $x\in\rn$,
\begin{align*}
M_{{\rm HL}}(f)(x):=\sup_{k\in\zz}
\sup_{y+B_k\ni x}\frac1{|B_k|}
\int_{y+B_k}|f(z)|\,dz.
\end{align*}
Then, by the proof of \cite[(1.14)]{fl}, we find that
\begin{align}\label{3e1`}
\supp G\subset\lf[\bigcup_{j\in\zz}\widehat{(O_j)^*_{\gamma}}\cup E\r],
\end{align}
where $E\subset\rn\times\zz$ satisfies that
$\sum_{\ell\in\zz}\int_{\{y\in\rn:\ (y,\ell)\in E\}}\,dy=0$.
Moreover, applying \cite[(1.15)]{fl}, we know that,
for any $j\in\zz$, there exist an integer $N_j\in\nn\cup\{\fz\}$,
$\{x_k^j\}_{k=1}^{N_j}\subset (O_j)^*_{\gamma}$, and
$\{l_k\}_{k=1}^{N_j}\subset\zz$ such that
$\{x_k^j+B_{l_k}^j\}_{k=1}^{N_j}$ has finite intersection property
and
\begin{align}\label{3e4}
(O_j)^*_{\gamma}&=\bigcup_{k=1}^{N_j} \lf(x_k^j+B_{l_k}^j\r)\noz\\
&=\lf(x_1^j+B_{l_1}^j\r)\cup\lf\{\lf(x_2^j+B_{l_2}^j\r)\setminus\lf(x_1^j+B_{l_1}^j\r)\r\}
\cup\cdots\cup\lf\{\lf(x_{N_j}^j+B_{l_{N_j}}^j\r)\setminus \bigcup_{i=1}^{N_j-1}\lf(x_i^j+B_{l_i}^j\r)\r\}\noz\\
&=:\bigcup_{k=1}^{N_j} B_{j,k}.
\end{align}
Note that, for any $j\in\zz$, $\{B_{j,k}\}_{k=1}^{N_j}$ are mutually disjoint.
Thus, $\widehat{(O_j)^*_{\gamma}}=\bigcup_{k=1}^{N_j} \widehat{B_{j,k}}.$
For any $j\in\zz$ and $k\in\{1,\ldots,N_j\}$, let
$C_{j,k}:=\widehat{B_{j,k}}\cap\lf[\widehat{(O_j)^*_{\gamma}}
\setminus \widehat{(O_{j+1})^*_{\gamma}}\r],$
\begin{align}\label{3e2}
A_{j,k}:=2^{-j}\lf\|\mathbf{1}_{x_k^j+B_{l_k}^j}\r\|_{\lv}^{-1}G\mathbf{1}_{C_{j,k}},
\end{align}
and $\lz_{j,k}:=2^j\|\mathbf{1}_{x_k^j+B_{l_k}^j}\|_{\lv}$.
Therefore, from \eqref{3e1`}, it follows that
$$G=\sum_{j\in\zz}\sum_{k=1}^{N_j}
\lz_{j,k}A_{j,k}~~~~{\rm and}~~~~|G|=\sum_{j\in\zz}\sum_{k=1}^{N_j}
\lz_{j,k}|A_{j,k}|$$ almost everywhere on $\rn\times\zz$.
We now show that, for any $j\in\zz$ and $k\in\{1,\ldots,N_j\}$,
$A_{j,k}$ is an anisotropic $(\vp,\fz)$-atom supported in $\widehat{x_k^j+B_{l_k}^j}$.
Obviously,
$\supp A_{j,k}\subset C_{j,k}\subset \widehat{B_{j,k}}\subset \widehat{x_k^j+B_{l_k}^j}.$
In addition, let $q\in(1,\fz)$ and
$h\in T_2^{q'}(\rn\times\zz)$ satisfy $\|h\|_{T_2^{q'}(\rn\times\zz)}\le 1$.
Note that $C_{j,k}\subset \widehat{(O_{j+1})^*_{\gamma}}^{\com}
=\bigcup_{x\in (G_{j+1})^*_{\gamma}}\Gamma(x).$ Applying this,
\cite[Lemma 1.3]{fl}, the H\"{o}lder inequality, and \eqref{3e2},
we find that
\begin{align*}
\lf|\lf\langle A_{j,k},h\r\rangle\r|
&=\lf|\sum_{\ell\in\zz}\int_{C_{j,k}}A_{j,k}(y,\ell)h(y,\ell)\,dy\r|\\
&\ls\int_{G_{j+1}}\sum_{\ell\in\zz}\int_{\{y\in\rn:\
(y,\ell)\in\Gamma(x)\}}b^{-\ell}\lf|A_{j,k}(y,\ell)h(y,\ell)\r|\,dy\,dx\\
&\ls\int_{(O_{j+1})^{\com}}\ca(A_{j,k})(x)\ca(h)(x)\,dx\\
&\ls\lf\{\int_{(O_{j+1})^{\com}}\lf[\ca(A_{j,k})(x)\r]^q\,dx\r\}^{1/q}
\lf\{\int_{(O_{j+1})^{\com}}\lf[\ca(h)(x)\r]^{q'}\,dx\r\}^{1/{q'}}\\
&\ls 2^{-j}\lf\|\mathbf{1}_{x_k^j+B_{l_k}^j}\r\|_{\lv}^{-1}
\lf\{\int_{(x_k^j+B_{l_k}^j)\cap(O_{j+1})^{\com}}\lf[\ca(G)(x)\r]^q\,dx\r\}^{1/q}
\|h\|_{T_2^{q'}(\rn\times\zz)}\\
&\ls \frac{|x_k^j+B_{l_k}^j|^{1/q}}{\|\mathbf{1}_{x_k^j+B_{l_k}^j}\|_{\lv}},
\end{align*}
which, combined with
$(T_2^{q}(\rn\times\zz))^*=T_2^{q'}(\rn\times\zz)$
(see \cite{cms85,fl}), further
implies that $\|A_{j,k}\|_{T_2^{q}(\rn\times\zz)}
\ls \frac{|x_k^j+B_{l_k}^j|^{1/q}}{\|\mathbf{1}_{x_k^j+B_{l_k}^j}\|_{\lv}}$.
This implies that, for any $j\in\zz$ and $k\in\{1,\ldots,N_j\}$,
$A_{j,k}$ is an anisotropic $(\vp,q)$-atom up to a harmless constant
multiple for all $q\in(1,\fz)$. Thus, for any $j\in\zz$ and $k\in\{1,\ldots,N_j\}$,
$A_{j,k}$ is an anisotropic $(\vp,\fz)$-atom up to a harmless constant multiple.

We next prove \eqref{3e3}. To achieve this, from \eqref{3e4},
 the finite intersection property of
$\{x_k^j+B_{l_k}^j\}_{k=1}^{N_j}$, the fact that
$\mathbf1_{(O_j)^*_{\gamma}}\ls [\HL(\mathbf1_{O_j})]^{1/\tau}$
with $\tau\in(0,\underline{p})$, and \cite[Lemma 4.4]{lwyy17},
we deduce that
\begin{align*}
&\lf\|\lf\{\sum_{j\in\zz}\sum_{k=1}^{N_j}\lf[\frac{\lambda_{j,k}}
{\|\mathbf1_{x_k^j+B_{l_k}^j}\|_{\lv}}\r]^{\underline{p}}
\mathbf1_{x_k^j+B_{l_k}^j}\r\}^{1/\underline{p}}\r\|_{\lv}\\
&\hs=\lf\|\lf\{\sum_{j\in\zz}\sum_{k=1}^{N_j}\lf(2^j
\mathbf1_{x_k^j+B_{l_k}^j}\r)^{\underline{p}}
\r\}^{1/\underline{p}}\r\|_{\lv}
\ls\lf\|\lf\{\sum_{j\in\zz}\lf[2^j
\mathbf1_{(O_j)^*_{\gamma}}\r]^{\underline{p}}
\r\}^{1/\underline{p}}\r\|_{\lv}\\
&\hs\ls\lf\|\lf\{\sum_{j\in\zz}\lf(2^j
\mathbf1_{O_j}\r)^{\underline{p}}
\r\}^{1/\underline{p}}\r\|_{\lv}
\sim \lf\|\lf\{\sum_{j\in\zz}\lf(2^j
\mathbf1_{O_j\setminus O_{j+1}}\r)^{\underline{p}}
\r\}^{1/\underline{p}}\r\|_{\lv}\\
&\hs\sim \lf\|\ca(G)\lf[\sum_{j\in\zz}
\mathbf1_{O_j\setminus O_{j+1}}
\r]^{1/\underline{p}}\r\|_{\lv}
\sim \lf\|\ca(G)\r\|_{\lv}
\sim \|G\|_{T^{\vp}_A(\rn\times\zz)}.
\end{align*}
This implies that \eqref{3e3} holds true and hence finishes
the proof of Lemma \ref{tent}.
\end{proof}

With the help of Theorems \ref{2t1} and \ref{2t2} and Lemma \ref{tent},
we now show Theorem \ref{2t3}.

\begin{proof}[Proof of Theorem \ref{2t3}]
Let $\vp$, $s$, $\underline{p}$, and $\phi$ be as in the present theorem.
We first prove (i). To this end, let $b\in \mathcal{L}_{\vp,1,s,\underline{p}}^A({\rn})$
and $\{x_j+B_{l_j}\}_{j=1}^m\subset \BB$, where
$m\in\nn$, $\{x_j\}_{j=1}^m\subset\rn$, and
$\{l_j\}_{j=1}^m\subset\zz$. Then, for any $j\in\{1,\ldots,m\}$,
we have
\begin{align}\label{4e3}
b&=P^s_{x_j+B_{l_j}}b+\lf(b-P^s_{x_j+B_{l_j}}b\r)\mathbf{1}_{x_j+B_{l_j+\omega}}+
\lf(b-P^s_{x_j+B_{l_j}}b\r)\mathbf{1}_{(x_j+B_{l_j+\omega})^{\com}}\noz\\
&=:b_{j}^{(1)}+b_{j}^{(2)}+b_{j}^{(3)}
\end{align}
with $\omega$ as in \eqref{2e2}.
Note that, for any $\az\in\zz_+^n$ with $|\az|\le s$,
$\int_{\rn}\phi(x)x^{\az}\,dx=0$. Therefore, for any $k\in\zz$
and $j\in\{1,\ldots,m\}$,
$\phi_k\ast b_{j}^{(1)}\equiv 0$ and hence, for any $j\in\{1,\ldots,m\}$,
we have
\begin{equation}\label{4e4}
\sum_{\ell\in\zz}\int_{\{x\in\rn:\ (x,\ell)\in\widehat{x_j+B_{l_j}}\}}\lf|
\phi_{-\ell}\ast b_{j}^{(1)}(x)\r|^2\,dx=0.
\end{equation}
In addition, from the Tonelli theorem and the boundedness
of the $g$-function (see, for instance, \cite[Theorem 6.3]{hlyy19}),
we deduce that, for any $j\in\{1,\ldots,m\}$,
\begin{align}\label{4e10}
&\sum_{\ell\in\zz}\int_{\{x\in\rn:\ (x,\ell)\in\widehat{x_j+B_{l_j}}\}}\lf|
\phi_{-\ell}\ast b_{j}^{(2)}(x)\r|^2\,dx\noz\\
&\hs\le \int_{\rn}\sum_{\ell\in\zz}\lf|
\phi_{-\ell}\ast b_{j}^{(2)}(x)\r|^2\,dx
\ls\lf\|b_{j}^{(2)}\r\|_{L^2(\rn)}^2
\sim \int_{x_j+B_{l_j+\omega}}\lf|b(x)-P^s_{x_j+B_{l_j}}b(x)\r|^2\,dx\noz\\
&\hs\ls \int_{x_j+B_{l_j+\omega}}\lf|b(x)-P^s_{x_j+B_{l_j+\omega}}b(x)\r|^2\,dx
+\int_{x_j+B_{l_j+\omega}}\lf|P^s_{x_j+B_{l_j+\omega}}
b(x)-P^s_{x_j+B_{l_j}}b(x)\r|^2\,dx.
\end{align}
Moreover, by \cite[(8.9)]{mb03} (see also \cite[Lemma 4.1]{lu}),
we know that, for any $x\in x_j+B_{l_j+\omega}$,
\begin{align*}
\lf|P^s_{x_j+B_{l_j+\omega}}b(x)-P^s_{x_j+B_{l_j}}b(x)\r|
&=\lf|P^s_{x_j+B_{l_j}}\lf(b-P^s_{x_j+B_{l_j+\omega}}b\r)(x)\r|\\
&\ls \frac1{|x_j+B_{l_j}|}\int_{x_j+B_{l_j+\omega}}
\lf|b(y)-P^s_{x_j+B_{l_j+\omega}}b(y)\r|\,dy,
\end{align*}
which, together with \eqref{4e10} and Lemma \ref{5l2},
further implies that, for any $m\in\nn$, $\{x_j+B_{l_j}\}_{j=1}^m\subset \BB$,
and $\{\lz_j\}_{j=1}^m\subset [0,\fz)$ with $\sum_{j=1}^m\lambda_j\neq0$,
\begin{align*}
&\lf\|\lf\{\sum_{i=1}^m
\lf[\frac{{\lambda}_i}{\|{\mathbf{1}}_{x_i+B_{l_i}}\|_{\lv}}\r]^{\underline{p}}
{\mathbf{1}}_{x_i+B_{l_i}}\r\}^{1/{\underline{p}}}\r\|_{\lv}^{-1}
\sum_{j=1}^m\frac{{\lambda}_j|x_j+B_{l_j}|^{\frac12}}{\|{\mathbf{1}}_{x_j+B_{l_j}}
\|_{\lv}}\\
&\hs\hs\hs\times\lf[\sum_{\ell\in\zz}\int_{\{x\in\rn:\ (x,\ell)\in\widehat{x_j+B_{l_j}}\}}\lf|
\phi_{-\ell}\ast b_{j}^{(2)}(x)\r|^2\,dx\r]^{\frac12}\\
&\hs\ls \lf\|\lf\{\sum_{i=1}^m
\lf[\frac{{\lambda}_i}{\|{\mathbf{1}}_{x_i+B_{l_i+\omega}}\|_{\lv}}\r]^{\underline{p}}
{\mathbf{1}}_{x_i+B_{l_i+\omega}}\r\}^{1/{\underline{p}}}\r\|_{\lv}^{-1}
\sum_{j=1}^m\frac{{\lambda}_j|x_j
+B_{l_j+\omega}|^{\frac12}}{\|{\mathbf{1}}_{x_j+B_{l_j+\omega}}\|_{\lv}}\\
&\hs\hs\hs\times\lf\{\lf[\int_{x_j+B_{l_j+\omega}}
\lf|b(x)-P^s_{x_j+B_{l_j+\omega}}b(x)\r|^2\,dx\r]^{\frac12}\r.\\
&\hs\hs\hs+\lf.
\frac1{|x_j+B_{l_j}|^{\frac12}}\int_{x_j+B_{l_j+\omega}}
\lf|b(x)-P^s_{x_j+B_{l_j+\omega}}b(x)\r|\,dx\r\}.
\end{align*}
This, combined with $p_+\in(0,2)$ and Corollary \ref{2c1}, further implies that
\begin{align}\label{4e6}
&\lf\|\lf\{\sum_{i=1}^m
\lf[\frac{{\lambda}_i}{\|{\mathbf{1}}_{x_i+B_{l_i}}\|_{\lv}}\r]^{\underline{p}}
{\mathbf{1}}_{x_i+B_{l_i}}\r\}^{1/{\underline{p}}}\r\|_{\lv}^{-1}
\sum_{j=1}^m\frac{{\lambda}_j|x_j+B_{l_j}|^{1/2}}{\|{\mathbf{1}}_{x_j+B_{l_j}}
\|_{\lv}}\noz\\
&\hs\hs\times\lf[\sum_{\ell\in\zz}
\int_{\{x\in\rn:\ (x,\ell)\in\widehat{x_j+B_{l_j}}\}}\lf|
\phi_{-\ell}\ast b_{j}^{(2)}(x)\r|^2\,dx\r]^{1/2}\noz\\
&\hs\ls \|b\|_{\mathcal{L}_{\vp,1,s,\underline{p}}^A({\rn})}.
\end{align}
We now estimate $b_j^{(3)}$. For this purpose, let $r\in(0,\underline{p})$
and $\vaz\in([2/r-1]\ln b/\ln\lz_-,\fz)$.
Then, for any $j\in\{1,\ldots,m\}$ and
$(x,\ell)\in \widehat{x_j+B_{l_j}}$, we have
\begin{align*}
\lf|\phi_{-\ell}\ast b_j^{(3)}(x)\r|
&\ls \int_{(x_j+B_{l_j+\omega})^{\com}}\frac{b
^{\vaz\ell\frac{\ln\lz_-}{\ln b}}}
{[b^{\ell}+\rho(x-y)]^{1+\vaz\frac{\ln\lz_-}{\ln b}}}
\lf|b(y)-P^s_{x_j+B_{l_j}}b(y)\r|\,dy\\
&\ls \frac{b^{\vaz\ell\frac{\ln\lz_-}{\ln b}}}{b^{\vaz l_j\frac{\ln\lz_-}{\ln b}}}\int_{(x_j+B_{l_j+\omega})^{\com}}\frac{b
^{\vaz l_j\frac{\ln\lz_-}{\ln b}}|b(y)-P^s_{x_j+B_{l_j}}b(y)|}
{b^{l_j(1+\vaz\frac{\ln\lz_-}{\ln b})}
+[\rho(x_j-y)]^{1+\vaz\frac{\ln\lz_-}{\ln b}}}
\,dy.\noz
\end{align*}
By this, Theorem \ref{2t2}, and the Tonelli theorem, we find that,
for any $m\in\nn$, $\{x_j+B_{l_j}\}_{j=1}^m\subset \BB$, and
$\{\lz_j\}_{j=1}^m\subset [0,\fz)$ with $\sum_{j=1}^m\lambda_j\neq0$,
\begin{align*}
&\lf\|\lf\{\sum_{i=1}^m
\lf[\frac{{\lambda}_i}{\|{\mathbf{1}}_{x_i+B_{l_i}}\|_{\lv}}\r]^{\underline{p}}
{\mathbf{1}}_{x_i+B_{l_i}}\r\}^{1/{\underline{p}}}\r\|_{\lv}^{-1}
\sum_{j=1}^m\frac{{\lambda}_j|x_j+B_{l_j}|^{1/2}}{\|{\mathbf{1}}_{x_j+B_{l_j}}
\|_{\lv}}\\
&\hs\hs\times\lf[\sum_{\ell\in\zz}
\int_{\{x\in\rn:\ (x,\ell)\in\widehat{x_j+B_{l_j}}\}}\lf|
\phi_{-\ell}\ast b_{j}^{(3)}(x)\r|^2\,dx\r]^{1/2}\\
&\hs\ls\lf\|\lf\{\sum_{i=1}^m
\lf[\frac{{\lambda}_i}{\|{\mathbf{1}}_{x_i+B_{l_i}}\|_{\lv}}\r]^{\underline{p}}
{\mathbf{1}}_{x_i+B_{l_i}}\r\}^{1/{\underline{p}}}\r\|_{\lv}^{-1}
\sum_{j=1}^m\frac{{\lambda}_j|x_j+B_{l_j}|}{\|{\mathbf{1}}_{x_j+B_{l_j}}
\|_{\lv}}\\
&\hs\hs\times\sum_{\ell=-\fz}^{l_j}{b^{-(l_j-\ell)\vaz\frac{\ln\lz_-}{\ln b}}}
\int_{(x_j+B_{l_j+\omega})^{\com}}\frac{b
^{\vaz l_j\frac{\ln\lz_-}{\ln b}}|b(y)-P^s_{x_j+B_{l_j}}b(y)|}
{b^{l_j(1+\vaz\frac{\ln\lz_-}{\ln b})}
+[\rho(x_j-y)]^{1+\vaz\frac{\ln\lz_-}{\ln b}}}
\,dy\\
&\hs\ls\|b\|_{\mathcal{L}_{\vp,1,s,\underline{p}}^{A,\vaz}(\rn)}
\sim\|b\|_{\mathcal{L}_{\vp,1,s,\underline{p}}^{A}(\rn)}.\noz
\end{align*}
Combining this, \eqref{4e3}, \eqref{4e4}, and \eqref{4e6}, we
conclude that
\begin{align*}
&\lf\|\lf\{\sum_{i=1}^m
\lf[\frac{{\lambda}_i}{\|{\mathbf{1}}_{x_i+B_{l_i}}\|_{\lv}}\r]^{\underline{p}}
{\mathbf{1}}_{x_i+B_{l_i}}\r\}^{1/{\underline{p}}}\r\|_{\lv}^{-1}
\sum_{j=1}^m\frac{{\lambda}_j|x_j+B_{l_j}|^{1/2}}{\|{\mathbf{1}}_{x_j+B_{l_j}}
\|_{\lv}}\\
&\hs\hs\times\lf[\sum_{\ell\in\zz}
\int_{\{x\in\rn:\ (x,\ell)\in\widehat{x_j+B_{l_j}}\}}\lf|
\phi_{-\ell}\ast b(x)\r|^2\,dx\r]^{1/2}\noz\\
&\hs\ls \|b\|_{\mathcal{L}_{\vp,1,s,\underline{p}}^A({\rn})},\noz
\end{align*}
which implies that,
for any $(x,k)\in\rn\times\zz$, $d\mu(x,k):=\sum_{\ell\in\zz}
|\phi_{-\ell}\ast b(x)|^2\,dx\,\delta_{\ell}(k)$ is a
$\vp$-Carleson measure on $\rn\times\zz$ and
$\|d\mu\|_{\mathcal{C}_{\vp,A}}
\ls\|b\|_{\mathcal{L}_{\vp,1,s,\underline{p}}^A({\rn})}$.
This finishes the proof of (i).

We now prove (ii). Indeed, let $f\in H_{A,{\rm fin}}^{\vp,\fz,s}(\rn)$
with the norm greater than zero. Then $f\in L^{\fz}(\rn)$ with compact support. Therefore, by the fact that
$b\in L^2_{\rm loc}(\rn)$ and \cite[(2.10)]{fl},  we know that
\begin{equation}\label{4e9}
\int_{\rn}f(x)\overline{b(x)}\,dx
\sim \sum_{\ell\in\zz}\int_{\rn}\varphi_{-\ell}\ast f(x)\overline{\phi_{-\ell}\ast b(x)}\,dx,
\end{equation}
where $\varphi\in\cs(\rn)$ satisfies that $\supp \widehat{\varphi}$
is compact and away from the origin and, for any $\xi\in\rn\setminus\{\mathbf{0}\}$,
$\sum_{k\in\zz} \widehat{\varphi}((A^\ast)^k\xi)\widehat{\phi}((A^\ast)^k\xi)=1$
with $A^\ast$ as the adjoint matrix of $A$.
Furthermore, from the Lusin area function
characterization of $\vh$ (see \cite[Theorem 4.4(i)]{lhy})
and the fact that $f\in \vh$, it follows that
\begin{align*}
\lf\|\varphi_{-\ell}\ast f\r\|_{T_A^{\vp}(\rn\times\zz)}\sim\|f\|_{\vh}<\fz.
\end{align*}
This, together with Lemma \ref{tent},
further implies that there exist $\{\lz_j\}_{j\in\nn}\subset [0,\fz)$
and a sequence $\{A_j\}_{j\in\nn}$ of anisotropic $(\vp,\fz)$-atoms supported, respectively, in
$\{\widehat{B^{(j)}}\}_{j\in\nn}$ with $\{B^{(j)}\}_{j\in\nn}\subset \BB$
such that, for almost every $(x,\ell)\in\rn\times\zz$,
$$\varphi_{-\ell}\ast f(x)=\sum_{j\in\nn}\lz_jA_j(x,\ell)$$
and
\begin{align*}
0<\lf\|\lf\{\sum_{j\in\nn}\lf[\frac{\lambda_j}
{\|\mathbf1_{B^{(j)}}\|_{\lv}}\r]^{\underline{p}}
\mathbf1_{B^{(j)}}\r\}^{1/\underline{p}}\r\|_{\lv}
\ls\|f\|_{\vh}.
\end{align*}
Combining this, \eqref{4e9}, the H\"{o}lder inequality,
the Tonelli theorem, and the size condition
of $A_j$, we conclude that, for any
$f\in H_{A,{\rm fin}}^{\vp,\fz,s}(\rn)$,
\begin{align*}
&\lf|\int_{\rn}f(x)\overline{b(x)}\,dx\r|\\
&\hs\ls \sum_{\ell\in\zz}\sum_{j\in\nn}\lz_j\int_{\rn}\lf|A_j(x,\ell)\r|
\lf|\phi_{-\ell}\ast b(x)\r|\,dx\noz\\
&\hs\ls\sum_{j\in\nn}\lz_j\lf[\sum_{\ell\in\zz}\int_{\{x\in\rn:\
(x,\ell)\in\widehat{B^{(j)}}\}}\lf|A_j(x,\ell)\r|^2
\,dx\r]^{\frac12}
\lf[\sum_{\ell\in\zz}\int_{\{x\in\rn:\
(x,\ell)\in\widehat{B^{(j)}}\}}
\lf|\phi_{-\ell}\ast b(x)\r|^2 \,dx\r]^{\frac12}\noz\\
&\hs\ls \sum_{j\in\nn}\lz_j\lf\|A_j\r\|_{T^2_2(\rn\times\zz)}
\lf[\sum_{\ell\in\zz}\int_{\{x\in\rn:\
(x,\ell)\in\widehat{B^{(j)}}\}}\lf|\phi_{-\ell}\ast b(x)\r|^2\,dx\r]^{1/2}\noz\\
&\hs\ls \sum_{j\in\nn}\frac{\lz_j|B^{(j)}|^{1/2}}{\|\mathbf{1}_{B^{(j)}}\|_{\lv}}
\lf[\sum_{\ell\in\zz}\int_{\{x\in\rn:\
(x,\ell)\in\widehat{B^{(j)}}\}}\lf|\phi_{-\ell}\ast b(x)\r|^2
\,dx\r]^{1/2}\noz\\
&\hs\ls \|f\|_{\vh}\widetilde{\|d\mu\|}_{\mathcal{C}_{\vp,A}}.
\end{align*}
From this, Theorem \ref{2t1}, $p_+\in(0,2)$, Corollary \ref{2c1},
and Remark \ref{4r1}, we further conclude that
\begin{align*}\label{5e3}
\|b\|_{\mathcal{L}_{\vp,1,s,\underline{p}}^A({\rn})}
\ls\|d\mu\|_{\mathcal{C}_{\vp,A}},
\end{align*}
which completes the proof of (ii) and hence of Theorem \ref{2t3}.
\end{proof}

\begin{remark}
Let $\vp\in C^{\log}(\rn)$ with $p_+\in(0,1]$, and $s$ be as in \eqref{4e1}.
Then, using Proposition \ref{2p2},
we know that Theorem \ref{2t3} also gives the Carleson
measure characterization of the anisotropic variable
Campanato space $\mathcal{L}_{\vp,1,s}^A({\rn})$.
We should point out that, even in this case, Theorem \ref{2t3} is also new.
\end{remark}

%
%


\bigskip

\noindent Long Huang and Xiaofeng Wang

\medskip

\noindent School of Mathematics and Information Science,
Key Laboratory of Mathematics and Interdisciplinary Sciences of the Guangdong Higher Education Institute,
Guangzhou University, Guangzhou, 510006, People's Republic of China

\smallskip

\noindent {\it E-mails}:
\texttt{longhuang@gzhu.edu.cn} (L. Huang)

\noindent\phantom{{\it E-mails:}}
\texttt{wxf@gzhu.edu.cn} (X. Wang)

\end{document}